\documentclass[a4paper,12pt]{amsart}
\usepackage{amssymb}

\theoremstyle{plain}
  \newtheorem{thm}{Theorem}[section]
  \newtheorem{lem}[thm]{Lemma}
  \newtheorem{slem}[thm]{Sublemma}
  \newtheorem{cor}[thm]{Corollary}
  \newtheorem{prop}[thm]{Proposition}
  \newtheorem{fact}[thm]{Fact}

\theoremstyle{definition}
  \newtheorem{defn}[thm]{Definition}
  
  \newtheorem{asmp}[thm]{Assumption}

\theoremstyle{remark}
  \newtheorem{rem}[thm]{Remark}

\numberwithin{equation}{section}

\DeclareMathOperator{\vol}{vol}
\DeclareMathOperator{\diam}{diam}
\DeclareMathOperator{\divv}{div}
\DeclareMathOperator{\supp}{supp}

\DeclareMathOperator{\Ric}{Ric}

\DeclareMathOperator{\bdiv}{\overline{\text{\rm div}}}
\DeclareMathOperator{\blap}{\bar\Delta}
\DeclareMathOperator{\dbl}{dbl}
\DeclareMathOperator{\BG}{BG}

\newcommand{\Hm}{\mathcal{H}}
\newcommand{\loc}{{\text{\rm loc}}}

\newcommand{\Cut}{\text{\rm Cut}}

\newcommand{\field}[1]{\mathbb{#1}}

\newcommand{\R}{\field{R}}

\pagestyle{plain}

\begin{document}

\title{Laplacian comparison\\ for Alexandrov spaces}

\date{September 6, 2007}

\begin{abstract}
  We consider an infinitesimal version of the Bishop-Gromov
  relative volume comparison condition
  as generalized notion of Ricci curvature bounded below
  for Alexandrov spaces.
  We prove a Laplacian comparison theorem for Alexandrov spaces under
  the condition.  As an application we prove a topological
  splitting theorem.
\end{abstract}

\author{Kazuhiro Kuwae}
\address{Department of Mathematics, Faculty of Education,
  Kumamoto University, Kumamoto, 860-8555, JAPAN}
\email{kuwae@gpo.kumamoto-u.ac.jp}

\author{Takashi Shioya}
\address{Mathematical Institute, Tohoku University, Sendai 980-8578,
  JAPAN}
\email{shioya@math.tohoku.ac.jp}

\dedicatory{Dedicated to Professor Karsten Grove on the occasion of
his sixtieth birthday.}

\subjclass[2000]{Primary 53C20, 53C21, 53C23}


\keywords{splitting theorem, Ricci curvature, Bishop-Gromov inequality,
BV functions, Green formula}

\thanks{The authors are partially supported by a Grant-in-Aid
  for Scientific Research No.~16540201 and 14540056 from
  the Japan Society for the Promotion of Science}

\maketitle

\section{Introduction} \label{sec:intro}

In this paper, we study singular spaces of
Ricci curvature bounded below.
For Riemannian manifolds, having a lower bound of
Ricci curvature is equivalent to an infinitesimal version
of the Bishop-Gromov volume comparison condition.
Since it is impossible to define the Ricci curvature tensor
on Alexandrov spaces,
we consider such the volume comparison condition
as a candidate of the conditions of the Ricci curvature bounded below.

In Riemannian geometry, the Laplacian comparison theorem
is one of the most important tools
to study the structure of spaces with a lower bound of Ricci curvature.
A main purpose of this paper is to prove
a Laplacian comparison theorem for Alexandrov spaces
under the volume comparison condition.
As an application, we prove a topological splitting theorem
of Cheeger-Gromoll type.

Let us present the volume comparison condition.
For $\kappa \in \R$, we set
\[
s_\kappa(r) :=
\begin{cases}
  \sin(\sqrt{\kappa}r)/\sqrt{\kappa}  &\text{if $\kappa > 0$},\\
  r &\text{if $k = 0$},\\
  \sinh(\sqrt{|\kappa|}r)/\sqrt{|\kappa|} &\text{if $\kappa < 0$}.
\end{cases}
\]
The function $s_\kappa$ is the solution of the Jacobi equation
$s_\kappa''(r) + \kappa s_\kappa(r) = 0$ with initial condition
$s_\kappa(0) = 0$, $s_\kappa'(0) = 1$.

Let $M$ be an Alexandrov space of dimension $n \ge 2$.
For $p \in M$ and $0 < t \le 1$,
we define a subset $W_{p,t} \subset M$ and a map $\Phi_{p,t} : W_{p,t} \to M$
as follows.
$x \in W_{p,t}$ if and only if there exists $y \in M$ such that
$x \in py$ and $d(p,x) : d(p,y) = t:1$, where
$py$ is a minimal geodesic from $p$ to $y$ and $d$ the distance function.
For a given point $x \in W_{p,t}$ such a point $y$ is unique and we set
$\Phi_{p,t}(x) := y$.  The Alexandrov convexity (cf.~\S\ref{ssec:Alex})
implies the Lipschitz continuity of the map $\Phi_{p,t}$.
Let us consider the following.

{\bf Condition $\BG(\kappa)$ at a point $p \in M$:}  We have
\[
d(\Phi_{p,t\,*} \Hm^n)(x)
\ge \frac{t\,s_\kappa(t\,d(p,x))^{n-1}}{s_\kappa(d(p,x))^{n-1}}
d\Hm^n(x)
\]
for any $x \in M$ and $t \in (\,0,1\,]$ such that
$d(p,x) < \pi/\sqrt{\kappa}$ if $\kappa > 0$,
where $\Phi_{p,t\,*}\Hm^n$ means the push-forward
by $\Phi_{p,t}$ of the $n$-dimensional Hausdorff measure $\Hm^n$ on $M$.

If $M$ satisfies $\BG(\kappa)$ at any point $p \in M$,
we simply say that $M$ satisfies $\BG(\kappa)$.

The condition, $\BG(\kappa)$, is an infinitesimal version of
the Bishop-Gromov inequality.  For an $n$-dimensional complete
Riemannian manifold, $\BG(\kappa)$ holds if and only if
the Ricci curvature satisfies $\Ric \ge (n-1)\kappa$
(see Theorem 3.2 of \cite{Ota:mcp} for the `only if' part).
We see some studies on similar (or same) conditions to $\BG(\kappa)$ in
\cite{CC:strRicI,St:heat,KwSy:geneMCP,KwSy:sobmet,Rm:Poincare,Ota:mcp,
Wt:localcut} etc.
$\BG(\kappa)$ is sometimes called the Measure Contraction Property
and is weaker than the curvature-dimension condition introduced
by Sturm \cite{St:geomI,St:geomII} and Lott-Villani \cite{LV:Ricmm}.
Any Alexandrov space of curvature $\ge \kappa$ satisfies $\BG(\kappa)$.
However we do not necessarily assume $M$ to be of curvature $\ge \kappa$.
For example, a Gromov-Hausdorff limit of closed $n$-manifolds of
$\Ric \ge (n-1)\kappa$, sectional curvature $\ge \kappa_0$, diameter $\le D$,
and volume $\ge v > 0$ is an Alexandrov space with $\BG(\kappa)$ and of
curvature $\ge \kappa_0$.

To state the Laplacian comparison theorem, we need some notations and
definitions.
If $M$ has no boundary, we define $M^*$ as
the set of non-$\delta$-singular points of $M$ for a
number $\delta$ with $0 < \delta \ll 1/n$.
If $M$ has nonempty boundary, we refer to Fact \ref{fact:str} below
for $M^*$.  All the topological singularities of $M$ are entirely
contained in $M \setminus M^*$ and
$M^*$ has a natural structure of $C^\infty$ differentiable manifold.
We have a canonical Riemannian metric $g$ on $M^*$
which is a.e.~continuous and of locally bounded variation
(locally BV for short).  See \S\ref{ssec:Alex} for more details.
We set $\cot_\kappa(r) := s_\kappa'(r)/s_\kappa(r)$ and
$r_p(x) := d(p,x)$ for $p,x \in M$.
The \emph{distributional Laplacian $\blap r_p$ of $r_p$ on $M^*$}
is defined by the usual formula:
\[
\blap r_p := -D_i(\sqrt{|g|} \, g^{ij} \, \partial_j r),
\]
on a local chart of $M^*$, where $D_i$ is the distributional derivative
with respect to the $i^{\rm th}$ coordinate.
Then, $\blap r_p$ becomes a signed Radon measure on $M^*$
(see \S\ref{sec:Green}).
An main theorem of this paper is stated as follows.

\begin{thm}[Laplacian Comparison Theorem] \label{thm:LapComp}
  Let $M$ be an Alexandrov space of dimension $n \ge 2$.
  If $M$ satisfies $\BG(\kappa)$ at a point $p \in M$, then we have
  \begin{equation}
    \label{eq:LapComp}
    d\blap r_p \ge -(n-1)\cot_\kappa \circ r_p\,d\Hm^n
    \quad\text{on $M^* \setminus \{p\}$}.
  \end{equation}
\end{thm}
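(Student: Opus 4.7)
The plan is to reduce the measure inequality \eqref{eq:LapComp} to its integrated form against non-negative test functions. By the Green formula developed in \S\ref{sec:Green},
\[
\int_{M^*} \psi \, d\blap r_p = \int_{M^*} \langle \nabla \psi, \nabla r_p \rangle \, d\Hm^n
\]
for every $\psi \in C_c^\infty(M^*\setminus\{p\})$, so it suffices to show
\[
\int_{M^*} \langle \nabla \psi, \nabla r_p \rangle \, d\Hm^n \;\ge\; -(n-1)\int_{M^*} \psi \, \cot_\kappa(r_p)\, d\Hm^n
\]
for every such $\psi \ge 0$.

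The route to this inequality is to linearize $\BG(\kappa)$ at $t=1$. Given $\psi$ as above, set $\phi := \psi/r_p$, which lies in $C_c^\infty(M^*\setminus\{p\})$ since $r_p$ is bounded below on $\supp\psi$. Testing the measure inequality $\BG(\kappa)$ against $\phi$ and carrying the push-forward through the change of variables $y=\Phi_{p,t}(x)$ yields
\[
F(t) := \int_{W_{p,t}} \phi\circ\Phi_{p,t}\, d\Hm^n \;\ge\; \int_M \phi\cdot\frac{t\,s_\kappa(t r_p)^{n-1}}{s_\kappa(r_p)^{n-1}}\, d\Hm^n =: G(t)
\]
for every $t\in(0,1]$, with equality at $t=1$. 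Hence $F-G\ge 0$ vanishes at $t=1$, so the left derivatives satisfy $F'(1^-)\le G'(1^-)$.

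It remains to compute both derivatives. For $x$ off the cut locus of $p$ one has $\Phi_{p,t}(x)=\gamma_x(r_p(x)/t)$, where $\gamma_x$ is the unit-speed ray from $p$ through $x$, so $\tfrac{d}{dt}\Phi_{p,t}(x)\big|_{t=1}=-r_p(x)\,\nabla r_p(x)$. Combined with the Lipschitz control of $\Phi_{p,t}$ supplied by Alexandrov convexity and the smoothness and compact support of $\phi$, dominated convergence gives
\[
F'(1^-) = -\int_{M^*} r_p\,\langle\nabla\phi,\nabla r_p\rangle\, d\Hm^n,
\]
while differentiating $\rho_t := t\,s_\kappa(t r_p)^{n-1}/s_\kappa(r_p)^{n-1}$ directly yields $\rho_t'|_{t=1}=1+(n-1)\,r_p\cot_\kappa(r_p)$ and hence
\[
G'(1) = \int_{M^*}\phi\, d\Hm^n + (n-1)\int_{M^*}\phi\, r_p\cot_\kappa(r_p)\, d\Hm^n.
\]
Substituting $\phi=\psi/r_p$ and using $|\nabla r_p|=1$ a.e.\ on $M^*$, the bound $F'(1^-)\le G'(1^-)$ collapses, after the $\int\psi/r_p\, d\Hm^n$ terms cancel on both sides, to precisely the test-function inequality from the first paragraph.

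The principal technical difficulty I anticipate is the rigorous justification of the formula for $F'(1^-)$: one must verify that the difference quotients $(\phi\circ\Phi_{p,t}-\phi)/(t-1)$ admit a uniform $\Hm^n$-integrable dominant, control the variation of the domain $W_{p,t}$ against the cut locus of $p$, and deal carefully with the behavior near $\partial M^*$. This is where the Lipschitz property of $\Phi_{p,t}$ granted by Alexandrov convexity must be combined with the structure theory of $M^*$ (Fact~\ref{fact:str}) and the BV-calculus underlying the Green formula of \S\ref{sec:Green}.
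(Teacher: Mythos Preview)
Your approach is genuinely different from the paper's and is essentially correct, though two details need repair. The paper does not differentiate $\BG(\kappa)$ against a test function; instead it applies the Green formula (Theorem~\ref{thm:Green}) to a region $E$ with piecewise smooth boundary, reduces to showing $\int_{\partial E}\langle\nu_E,\nabla r_p\rangle\,d\Hm^{n-1}\ge -(n-1)\sup_E(\cot_\kappa\circ r_p)\,\Hm^n(E)$, and then approximates $\partial E$ by a countable disjoint family of thin pieces $B_k^\pm$ which it projects radially onto metric spheres. The area comparison Lemma~\ref{lem:acomp} (the differentiated form of $\BG(\kappa)$ from Lemma~\ref{lem:ar}(3)) controls the difference of the projected areas, and the geometric core is Lemma~\ref{lem:A}, which compares $\Hm^{n-1}$ on $\partial E$ with $\Hm^{n-1}$ of its radial projection; this in turn rests on Wald convexity and the angle estimates of Fact~\ref{fact:dia} and Corollary~\ref{cor:dia}. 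Your route bypasses all of that decomposition and linearizes the push-forward inequality directly at $t=1$, in the spirit of optimal-transport proofs of Laplacian comparison. It is more economical but leans harder on the a.e.\ differentiability of $t\mapsto\phi(\Phi_{p,t}(x))$ along geodesics, which in the Alexandrov setting must be extracted from the continuity of $\nabla r_p$ on $M\setminus(S_M\cup\Cut_p\cup\{p\})$ stated at the end of \S\ref{ssec:Alex}.

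Two corrections. First, $\phi=\psi/r_p$ is not in $C_c^\infty$ since $r_p$ is only DC; but $\phi$ is Lipschitz with compact support away from $p$, and along each geodesic from $p$ one has $\phi(\gamma(s))=\psi(\gamma(s))/s$, which is differentiable because $\psi\circ\gamma$ is, so everything you use survives. Second, your formula for $F'(1^-)$ omits a contribution from the shrinking domain $W_{p,t}$: writing
\[
\frac{F(t)-F(1)}{t-1}=A(t)+B(t),\qquad B(t)=\frac{1}{1-t}\int_{M\setminus W_{p,t}}\phi\,d\Hm^n\ge 0,
\]
the term $B(t)$ need not vanish (it detects mass on $\Cut_p$; cf.\ Remark~\ref{rem:LapCut}), so $F'(1^-)$ may fail to equal $-\int r_p\langle\nabla\phi,\nabla r_p\rangle\,d\Hm^n$. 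This does not damage the argument: since $B(t)\ge 0$ and $(F(t)-F(1))/(t-1)\le(G(t)-G(1))/(t-1)$, one has $A(t)\le(G(t)-G(1))/(t-1)$ directly, and passing to the limit gives $-\int r_p\langle\nabla\phi,\nabla r_p\rangle\,d\Hm^n\le G'(1)$, which is exactly what your final cancellation needs.
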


\begin{cor} \label{cor:LapComp}
  If $M$ is an Alexandrov space of dimension $n \ge 2$ and
  curvature $\ge \kappa$,
  then \eqref{eq:LapComp} holds for any $p \in M$.
\end{cor}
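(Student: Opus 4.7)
The plan is to reduce the corollary directly to Theorem \ref{thm:LapComp}: once we verify that every Alexandrov space of curvature $\ge \kappa$ satisfies $\BG(\kappa)$ at each $p \in M$, the inequality \eqref{eq:LapComp} follows at every $p$ simply by invoking the theorem. Thus the only substantive step is the implication \emph{curvature $\ge \kappa \Rightarrow \BG(\kappa)$}, which is precisely the remark recorded just after the definition of $\BG(\kappa)$.

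To establish this implication I would fix $p \in M$ and $t \in (0,1]$ and estimate the density of $\Phi_{p,t\,*}\Hm^n$ pointwise on the regular set $M^*$. Along each minimizing radial geodesic $\gamma:[0,r]\to M$ from $p$, the map $\Phi_{p,t}$ acts by $\gamma(tr)\mapsto\gamma(r)$; in polar coordinates at $p$, its Jacobian factors as the radial stretch $1/t$ times the transverse ratios $|J|(r)/|J|(tr)$ for infinitesimal perpendicular variations $J$ of $\gamma$ vanishing at $p$. Alexandrov curvature $\ge \kappa$ is the synthetic form of Rauch's first comparison theorem, and it yields the monotonicity of $r\mapsto|J|(r)/s_\kappa(r)$, whence $|J|(r)/|J|(tr)\le s_\kappa(r)/s_\kappa(tr)$ for $0<tr\le r<\pi/\sqrt{\kappa}$. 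Taking the product over the $n-1$ transverse directions and inverting to convert the Jacobian of $\Phi_{p,t}$ into the density of the pushforward measure gives exactly
\begin{equation*}
\frac{d\,\Phi_{p,t\,*}\Hm^n}{d\Hm^n}(x)\;\ge\;\frac{t\,s_\kappa(t\,d(p,x))^{n-1}}{s_\kappa(d(p,x))^{n-1}}
\end{equation*}
on $M^*$; since $M\setminus M^*$ is $\Hm^n$-negligible, this extends to all of $M$ and is exactly $\BG(\kappa)$ at $p$.

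The main obstacle is formalizing the transverse Jacobi comparison in the Alexandrov setting, where classical Riemannian calculus on Jacobi fields is not directly available. However, the required monotonicity can be obtained synthetically, either by applying Toponogov's comparison to a pair of radial geodesics emanating from $p$ and passing to the infinitesimal limit, or from the structure theory of Alexandrov spaces together with Petrunin's second variation formula; both routes are standard in the Alexandrov literature, so this step can be handled by citation. Once it is in hand, applying Theorem \ref{thm:LapComp} at each $p \in M$ completes the proof.
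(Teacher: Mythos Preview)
Your reduction is exactly the paper's: the corollary is immediate from Theorem~\ref{thm:LapComp} once one knows that curvature $\ge \kappa$ implies $\BG(\kappa)$, which the paper records as a fact in the introduction (with references) rather than proving. Your additional Jacobi-field sketch goes beyond what the paper does; it is heuristically correct but, as you note, would need a synthetic substitute in the Alexandrov setting---the paper sidesteps this entirely by citation.
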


Even if $M$ is a Riemannian manifold, $\blap r_p$ is not absolutely
continuous with respect to $\Hm^n$ on the cut-locus of $p$
(see Remark \ref{rem:LapCut}).
Different from Riemannian, the cut-locus
of an Alexandrov space is not necessarily a closed subset.
In fact, we have an example of an Alexandrov space for which
the singular set and the cut-locus are both dense in the space
(cf.~Example (2) in \S 0 of \cite{OS:rstralex}).
The Riemannian metric $g$ on $M^*$ is not continuous
on any singular point
and has at most the regularity of locally BV.
Therefore, the Laplacian of a $C^\infty$ function does not become
a function, only does a Radon measure in general.
In particular, considering a Laplacian comparison in the barrier sense
is meaningless.
In this reason, for Theorem \ref{thm:LapComp} a standard proof for Riemannian
does not work and we need a more delicate discussion using BV theory.

In \cite{Pt:subharmAlex}, Petrunin claims that the Laplacian of
any $\lambda$-semiconvex function is $\ge -n\lambda$ from the study of
gradient curves.  This implies Corollary \ref{cor:LapComp}.
However we do not know the details.
After Petrunin, Renesse \cite{Rs:compalex} proved Corollary \ref{cor:LapComp}
in a different way under some additional condition.
Our proof is based on a different idea from them.

We do not know if the converse to Theorem \ref{thm:LapComp} is true or not,
i.e., if \eqref{eq:LapComp} implies $\BG(\kappa)$ at $p$.
For $C^\infty$ Riemannian manifolds, this is easy to prove.

As an application to Theorem \ref{thm:LapComp} we have

\begin{thm}[Topological Splitting Theorem] \label{thm:splitting}
  If an Alexandrov space $M$ satisfies $\BG(0)$ and contains
  a straight line, then $M$ is homeomorphic to $N \times \R$
  for some topological space $N$.
\end{thm}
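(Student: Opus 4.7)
The plan is to adapt the classical Cheeger--Gromoll argument, with Theorem \ref{thm:LapComp} playing the role of the smooth Laplacian comparison and the distributional $\blap$ used throughout. Let $\gamma : \R \to M$ be the given straight line, and introduce the two Busemann functions
\[
b^\pm(x) := \lim_{t \to \infty}\bigl(t - d(x,\gamma(\pm t))\bigr),
\]
which are well-defined, $1$-Lipschitz, and satisfy $b^+ + b^- \le 0$ on $M$ with equality on the image of $\gamma$ (the latter is a direct consequence of the triangle inequality).

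Next, I would apply Theorem \ref{thm:LapComp} at $p = \gamma(\pm t)$ with $\kappa = 0$, so that $\cot_0(r) = 1/r$. Rewriting with $t - r_{\gamma(\pm t)}$ in place of $r_{\gamma(\pm t)}$, the comparison reads, on $M^* \setminus \{\gamma(\pm t)\}$,
\[
d\blap\bigl(t - r_{\gamma(\pm t)}\bigr) \le \frac{n-1}{r_{\gamma(\pm t)}}\, d\Hm^n.
\]
Since $t - r_{\gamma(\pm t)} \to b^\pm$ locally uniformly in $x$ as $t \to \infty$, while the right-hand side tends to zero on any fixed relatively compact set (disjoint from the ``point at infinity''), a distributional passage to the limit yields $d\blap b^+ \le 0$ and $d\blap b^- \le 0$ as signed Radon measures on $M^*$.

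Set $f := b^+ + b^-$. Then $d\blap f \le 0$, $f \le 0$ on $M$, and $f \equiv 0$ along $\gamma(\R)$. A strong maximum principle for distributionally superharmonic functions on $M^*$---of the type naturally furnished by the Green formula developed in the body of the paper---then forces $f \equiv 0$, so that $b^+ = -b^-$ and each $b^\pm$ is distributionally harmonic. In particular, since $b^+(x) + b^-(x) = 0$ and Busemann limits are realized by asymptotic rays, through every $x \in M$ there passes a straight line $\tilde\gamma_x$ with its two halves asymptotic to $\gamma^\pm$. Letting $N := (b^+)^{-1}(0)$ and defining $\Psi : N \times \R \to M$ by $\Psi(y,t) := \tilde\gamma_y(t)$, the relation $b^+ \circ \tilde\gamma_y(t) = t$ makes $\Psi$ a bijection; continuous dependence of asymptotic rays on their starting points (a standard Alexandrov fact), combined with local compactness and invariance of domain, then promotes $\Psi$ to a homeomorphism.

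The hard part will be the distributional limit in the second step. Because the Riemannian metric on $M^*$ is only locally BV and the cut-loci of the $\gamma(\pm t)$ can be wild---even dense, as the excerpt points out---one must genuinely use the BV/Green machinery of the paper to rule out accumulating singular contributions along $t \to \infty$. Once $d\blap b^\pm \le 0$ is in hand the maximum-principle step is essentially formal, and the topological assembly in the final step is standard Alexandrov content given $b^+ + b^- \equiv 0$.
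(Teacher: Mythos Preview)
Your plan is the paper's own argument. Two technical points where your write-up diverges from what the paper actually does:

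First, the paper never forms $\blap b^\pm$; it works directly with $\mathcal{E}$-subharmonicity (Definition~\ref{defn:subh}, Lemma~\ref{lem:bsubh}), and the Remark immediately following Lemma~\ref{lem:bsubh} says explicitly that $b_\gamma$ is in general not DC and $\blap b_\gamma$ need not exist as a Radon measure. So the inequality you want should be phrased as $\mathcal{E}(b^\pm,u)\le 0$ for all $0\le u\in C_0^\infty(M^*)$, not as a measure inequality.

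Second, the limit step is not carried by locally uniform convergence of $t-r_{\gamma(\pm t)}$ alone, and the ``BV/Green machinery'' you anticipate needing there is not what is used. The paper passes to the form version of the comparison (Corollary~\ref{cor:LapCompform}, which already encapsulates all the Green-formula work and the boundary contribution when $\partial M\neq\emptyset$), proves pointwise a.e.\ convergence $\nabla r_{\gamma(t_i)}\to -\nabla b_\gamma$ by arguing that the asymptotic ray is unique at each differentiability point, and then applies dominated convergence to $\int\langle\nabla r_{\gamma(t_i)},\nabla u\rangle\,d\Hm^n$. There are no further ``accumulating singular contributions'' to control at this stage. The maximum principle invoked is Lemma~\ref{lem:maxprin}, quoted from~\cite{Kw:maxprinsemi}, not a consequence of the Green formula; the topological product is Lemma~\ref{lem:bsplitting}.
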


We do not know if the isometric splitting in the theorem is true,
i.e., if $M$ is isometric to $N \times \R$ for some Alexandrov space
$N$.
If we replace `$\BG(0)$' with `curvature $\ge 0$', then the isometric
splitting is well-known (\cite{Mk:line})
as a generalization of the Toponogov splitting theorem.
For Riemannian manifolds, $\BG(0)$ is equivalent to $\Ric \ge 0$
and the isometric splitting was proved by
Cheeger-Gromoll \cite{CG:split}.
In our case, we do not have the Weitzenb\"ock formula, so that
we cannot obtain the isometric splitting at present.

If the metric of $M$ has enough $C^\infty$ part,
we prove the isometric splitting.

\begin{cor} \label{cor:splitting}
  Let $M$ be an Alexandrov space.  Assume that the singular set of $M$
  is closed and the non-singular set is an {\rm(}incomplete{\rm)}
  $C^\infty$ Riemannian manifold of $\Ric \ge 0$.
  If $M$ contains a straight line,
  then $M$ is isometric to $N \times \R$ for some Alexandrov space $N$.
\end{cor}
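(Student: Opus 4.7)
The plan is to combine the topological splitting of Theorem~\ref{thm:splitting} with a Cheeger--Gromoll-style argument carried out on the smooth part $M^{\mathrm{reg}}$, and then to promote the resulting isometric splitting of $M^{\mathrm{reg}}$ to one of $M$ by density. Let $\Sigma:=M\setminus M^{\mathrm{reg}}$, which is closed by hypothesis, so that $M^{\mathrm{reg}}$ is an open dense $C^{\infty}$ Riemannian manifold of $\Ric\ge 0$, and let $\gamma:\R\to M$ be the given line.

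First I would run the classical Cheeger--Gromoll program on $M^{\mathrm{reg}}$. Form the Busemann functions $b^{\pm}(x):=\lim_{t\to\infty}(t-d(x,\gamma(\pm t)))$; they are $1$-Lipschitz on $M$ with $b^{+}+b^{-}\le 0$ everywhere and equality along $\gamma$. Applying Theorem~\ref{thm:LapComp} with $\kappa=0$ to $r_{\gamma(\pm t)}$ and letting $t\to\infty$ yields $\blap b^{\pm}\le 0$ on $M^{\mathrm{reg}}$; since the metric is smooth there, this means $b^{\pm}$ are classically subharmonic for the ordinary Laplacian $\Delta=-\blap$. Hence $\Delta(b^{+}+b^{-})\ge 0$, and together with $b^{+}+b^{-}=0$ on $\gamma\cap M^{\mathrm{reg}}$ the strong maximum principle forces $b^{+}+b^{-}\equiv 0$ on the component of $M^{\mathrm{reg}}$ containing $\gamma$, then on $M$ by continuity and density. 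Consequently $b^{+}$ is harmonic and $1$-Lipschitz on $M^{\mathrm{reg}}$, hence $C^{\infty}$ by elliptic regularity. The Bochner identity
\[
\tfrac{1}{2}\Delta|\nabla b^{+}|^{2}=|\nabla^{2}b^{+}|^{2}+\Ric(\nabla b^{+},\nabla b^{+})\ge 0,
\]
combined with $|\nabla b^{+}|\le 1$ and $|\nabla b^{+}|=1$ along $\gamma$, yields $|\nabla b^{+}|\equiv 1$ and $\nabla^{2}b^{+}\equiv 0$ by a second application of the strong maximum principle. Thus $\nabla b^{+}$ is a parallel unit vector field on $M^{\mathrm{reg}}$, and its flow $\phi_{t}$ consists of unit-speed geodesics and acts by local isometries.

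Finally I would extend $\phi_{t}$ to an isometric $\R$-action on all of $M$. Each integral curve is a unit-speed geodesic along which $b^{+}$ advances linearly, and as a geodesic in the complete Alexandrov space $M$ it extends for all $t\in\R$. The restriction $\phi_{t}|_{M^{\mathrm{reg}}}$ is globally distance-preserving, so by density of $M^{\mathrm{reg}}$ in $M$ and continuity of $d$, $\phi_{t}$ extends uniquely to a one-parameter group of isometries of $M$. Setting $N:=(b^{+})^{-1}(0)$ with its induced length metric, the map $x\mapsto(\phi_{-b^{+}(x)}(x),b^{+}(x))$ is an isometry $M\to N\times\R$, and $N$ inherits an Alexandrov structure from the product decomposition.

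The main obstacle is this extension step: one must verify that the parallel flow on $M^{\mathrm{reg}}$ really does induce a well-defined, globally distance-preserving $\R$-action on $M$ respecting $\Sigma$, and that the cross-section $N$ is a genuine Alexandrov space rather than merely a metric space. Both ingredients should follow from the closedness of $\Sigma$, the density and connectedness of $M^{\mathrm{reg}}$, and the rigidity already established there, but care is needed at points where flow lines approach $\Sigma$.
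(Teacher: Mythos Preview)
Your argument and the paper's coincide up through the Weitzenb\"ock/Bochner step: both obtain $b_+ + b_- \equiv 0$ via the maximum principle, deduce that $b_+$ is smooth and harmonic on $M\setminus S_M$, and conclude $\nabla^2 b_+ \equiv 0$ there. The divergence comes in the final step. Rather than constructing the flow of $\nabla b_+$ and extending it, the paper simply observes that $\nabla^2 b_+ = 0$ means $b_+$ is affine along every geodesic in $M\setminus S_M$; since any minimal geodesic joining two non-singular points lies entirely in $M\setminus S_M$ (Petrunin's result that $\Sigma_x M$ is constant along the interior of a geodesic), such geodesics are dense among all geodesics, and by continuity $b_+$ is affine along \emph{every} geodesic of $M$. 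The isometric splitting is then read off from Mashiko's theorem (Theorem~A of \cite{Mk:splitting}), which says that an Alexandrov space admitting a non-constant affine function splits isometrically. So the paper outsources exactly the step you flagged as the obstacle.

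Your direct route can in fact be completed, but it needs a couple of ingredients you did not invoke. First, the flow lines through regular points are the straight lines of Lemma~\ref{lem:bsplitting}, and by the Petrunin fact above they stay in $M\setminus S_M$ for all time; this makes $\phi_t$ globally defined on $M\setminus S_M$. Second, the same fact gives $d_M = d_{M\setminus S_M}$ on pairs of regular points, so the Riemannian isometry $\phi_t$ really is $d_M$-preserving and extends. What is still delicate is verifying the product distance formula on $N\times\R$: for this you need that the level set $N$ is convex in $M$, which in turn requires knowing that $b_+$ is affine along \emph{all} geodesics of $M$---precisely the density argument the paper uses before invoking Mashiko. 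So even your flow approach ends up needing that step; once you have it, citing Mashiko is the shorter path.
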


For Riemannian orbifolds,
Borzellino-Zhu \cite{BZ:splitting} proved
an isometric splitting theorem.  Corollary \ref{cor:splitting}
is more general than their result.

In our previous paper \cite{KMS:lap},
we proved for an Alexandrov space $M$
the existence of the heat kernel of $M$ and the discreteness of
the spectrum of the generator (Laplacian) of the Dirichlet energy form
on a relatively compact domain in $M$.
As another application to Theorem \ref{thm:LapComp},
we have the following heat kernel
and first eigenvalue comparison results,
which generalize the results of Cheeger-Yau \cite{CY:heatkernel} and
Cheng \cite{Ch:eigencomp}.

$B(p,r)$ denotes the metric ball centered at $p$ and of radius $r$
and $M^n(\kappa)$ an $n$-dimensional
complete simply connected space form of curvature $\kappa$.

\begin{cor} \label{cor:heatcomp}
  Let $M$ be an $n$-dimensional Alexandrov space which satisfies
  $\BG(\kappa)$ at a point $p \in M$, and $\Omega \subset M$
  an open subset containing $B(p,r)$ for
  a number $r > 0$.
  Denote by $h_t : \Omega \times \Omega \to \R$, $t > 0$,
  the heat kernel on $\Omega$ with Dirichlet boundary condition,
  and by $\bar h_t : B(\bar p,r) \times B(\bar p,r) \to \R$
  that on $B(\bar p,r)$
  for a point $\bar p \in M^n(\kappa)$.
  Then, for any $t > 0$ and $q \in B(p,r)$ we have
  \[
  h_t(p,q) \ge \bar h_t(\bar p,\bar q),
  \]
  where $\bar q \in M^n(\kappa)$ is a point such that
  $d(\bar p,\bar q) = d(p,q)$.
\end{cor}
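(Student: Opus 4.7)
The plan is a Cheeger--Yau style radialization argument, powered by Theorem~\ref{thm:LapComp}. By the standard domain monotonicity of Dirichlet heat kernels, it suffices to treat the case $\Omega = B(p,r)$. On the model space $M^n(\kappa)$, rotational symmetry forces $\bar h_t(\bar p,\bar q) = \phi(t, d(\bar p,\bar q))$ for a smooth function $\phi$ on $(0,\infty)\times [0,r]$ that satisfies the radial heat equation
\[
\partial_t \phi = \phi_{\rho\rho} + (n-1)\cot_\kappa(\rho)\,\phi_\rho,
\]
the Dirichlet condition $\phi(t,r)=0$, and (by a standard maximum principle on the model ball) the spatial monotonicity $\phi_\rho \le 0$.

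The core of the argument would be to set $v(t,x):=\phi(t,r_p(x))$ on $B(p,r)$ and apply a BV chain rule for $\blap$. Since $r_p$ is $1$-Lipschitz with $|\nabla r_p|=1$ $\Hm^n$-a.e.\ and $\phi(t,\cdot)$ is smooth, this should give, as signed Radon measures on $B(p,r)\setminus\{p\}$,
\[
\blap v \;=\; -\phi_{\rho\rho}(t,r_p)\,d\Hm^n + \phi_\rho(t,r_p)\,\blap r_p.
\]
Theorem~\ref{thm:LapComp} gives $\blap r_p \ge -(n-1)\cot_\kappa(r_p)\,d\Hm^n$; multiplying by the non-positive factor $\phi_\rho(t,r_p)$ reverses the inequality and yields
\[
\blap v \;\le\; -\bigl(\phi_{\rho\rho}(t,r_p)+(n-1)\cot_\kappa(r_p)\,\phi_\rho(t,r_p)\bigr)\,d\Hm^n \;=\; -\partial_t v\,d\Hm^n.
\]
Thus $v$ is a weak subsolution of the heat equation: $\partial_t v + \blap v \le 0$ on $B(p,r)$.

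To finish, I would apply the parabolic maximum principle for the Dirichlet energy form built in \cite{KMS:lap}. The difference $u(t,x):=h_t(p,x)-v(t,x)$ is then a supersolution, vanishes on $\partial B(p,r)$, and has non-negative initial trace at $t\downarrow 0$: both $h_t(p,\cdot)$ and $v(t,\cdot)$ concentrate at $p$, and the limit mass of $v$ is at most $1$ by the Bishop--Gromov inequality implied by $\BG(\kappa)$, whereas $h_t(p,\cdot)$ has unit initial mass. The maximum principle then gives $u\ge 0$ on $B(p,r)$, which is exactly the claimed inequality $h_t(p,q)\ge \bar h_t(\bar p,\bar q)$.

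The step I expect to be the main obstacle is the BV chain rule used to split $\blap v$ above: multiplying the continuous factor $\phi_\rho(t,r_p)$ against the signed Radon measure $\blap r_p$, whose singular part may be supported on the cut locus of $p$, requires the locally BV calculus of Section~\ref{sec:Green}. The cleanest route is presumably to approximate $\phi(t,\cdot)$ by smooth radial profiles for which the formula is straightforward, and then pass to the limit using the distributional Green formula developed there.
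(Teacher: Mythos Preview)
Your proposal is correct and is essentially the same Cheeger--Yau argument that the paper invokes: the paper does not give its own proof but simply declares that, once the Laplacian comparison is available in the weak form of Corollary~\ref{cor:LapCompform}, the proof is identical to Renesse's Theorem~II in \cite{Rs:compalex}, which is precisely this radialization-plus-parabolic-comparison scheme. The only stylistic difference is that you work with the measure inequality of Theorem~\ref{thm:LapComp} and a BV chain rule, whereas the paper (via Renesse) phrases everything through the Dirichlet form and Corollary~\ref{cor:LapCompform}; the unnamed Corollary immediately after Theorem~\ref{thm:LapComp} already records the chain-rule identity $\blap(f\circ r_p)=f'(r_p)\,\blap r_p - f''(r_p)\,d\Hm^n$ for $C^2$ radial profiles (stated there for $f'\ge 0$, but the computation is an equality and the inequality simply reverses when $f'\le 0$), so the obstacle you flag is not a real one.
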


\begin{cor} \label{cor:eigen}
  Let $M$ be an $n$-dimensional Alexandrov space which satisfies
  $\BG(\kappa)$ at a point $p \in M$, and $r > 0$ a number.
  Denote by $\lambda_1(B(p,r))$ the first eigenvalue of
  the generator {\rm(}Laplacian{\rm)} of the Dirichlet energy form on
  $B(p,r)$ with Dirichlet boundary condition,
  and by $\lambda_1(B(\bar p,r))$ that on $B(\bar p,r)$ for
  a point $\bar p \in M^n(\kappa)$.
  Then we have
  \[
  \lambda_1(B(p,r)) \le \lambda_1(B(\bar p,r)).
  \]
\end{cor}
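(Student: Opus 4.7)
The plan is to derive the first-eigenvalue comparison directly from Corollary \ref{cor:heatcomp} via the long-time asymptotics of the Dirichlet heat kernel, rather than re-running the Laplacian comparison argument from scratch with a radial trial function. This way Corollary \ref{cor:eigen} becomes essentially a spectral consequence of the pointwise kernel inequality already at our disposal.

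First I would invoke the spectral framework established in the authors' earlier paper \cite{KMS:lap}: on the relatively compact open set $\Omega = B(p,r)$, the generator of the Dirichlet energy form has discrete spectrum $0 < \lambda_1 \le \lambda_2 \le \cdots$ with a continuous $L^2$-orthonormal basis of Dirichlet eigenfunctions $\{\phi_k\}$, and the heat kernel admits the pointwise Mercer-type expansion
\[
h_t(p,p) \;=\; \sum_{k \ge 1} e^{-\lambda_k t}\,\phi_k(p)^2, \qquad t > 0,
\]
together with its analogue for $\bar h_t(\bar p,\bar p)$ on $B(\bar p,r) \subset M^n(\kappa)$. The next step is to observe that the leading coefficient does not vanish: $\lambda_1$ is simple and its normalized eigenfunction $\phi_1$ is strictly positive on the interior of the connected set $B(p,r)$ by irreducibility of the heat semigroup of the regular strongly local Dirichlet form constructed in \cite{KMS:lap}, so $\phi_1(p) > 0$. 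Consequently
\[
\lambda_1(B(p,r)) \;=\; -\lim_{t\to\infty} \frac{1}{t}\log h_t(p,p),
\]
and the same identity holds on the model side for $\lambda_1(B(\bar p,r))$ and $\bar h_t(\bar p,\bar p)$.

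Finally I would apply Corollary \ref{cor:heatcomp} at the central point $q = p$, for which the corresponding model point is $\bar q = \bar p$, to obtain $h_t(p,p) \ge \bar h_t(\bar p,\bar p)$ for every $t > 0$; taking $-t^{-1}\log$ reverses the inequality, and passing to $t \to \infty$ yields the desired bound $\lambda_1(B(p,r)) \le \lambda_1(B(\bar p,r))$.

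The one point I expect to require care is the strict positivity $\phi_1(p) > 0$: on an Alexandrov space whose singular set and cut-locus may both be dense, the classical pointwise maximum-principle argument for the first Dirichlet eigenfunction is not automatic, and one must instead quote the irreducibility and strict-positivity properties of the heat semigroup built in \cite{KMS:lap}. Every other ingredient reduces to standard spectral-theoretic manipulation once the pointwise heat-kernel inequality of Corollary \ref{cor:heatcomp} is in hand.
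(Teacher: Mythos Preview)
Your argument is correct and in line with what the paper indicates: the authors give no self-contained proof of Corollary~\ref{cor:eigen} but simply state that, once Corollary~\ref{cor:LapCompform} is in hand, the proofs of Corollaries~\ref{cor:heatcomp} and~\ref{cor:eigen} are those of Theorem~II and Corollary~1 in Renesse~\cite{Rs:compalex}. Your route---applying Corollary~\ref{cor:heatcomp} with $\Omega=B(p,r)$ and $q=p$ and then reading off $\lambda_1$ from the long-time decay of $h_t(p,p)$---is the standard passage from a heat-kernel comparison to a first-eigenvalue comparison, and the one technical point you isolate (strict positivity $\phi_1(p)>0$, handled via irreducibility of the heat semigroup from \cite{KMS:lap}) is exactly the place where care is needed in the Alexandrov setting.
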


Once we have the Laplacian Comparison Theorem
(see Corollary \ref{cor:LapCompform}),
the proofs of Corollaries \ref{cor:heatcomp} and \ref{cor:eigen}
are the same as of Theorem II and Corollary 1 of Renesse's paper
\cite{Rs:compalex}.  
We can carefully verify that the local $(L^1,1)$-volume regularity
is not needed in the proof of Theorem II of \cite{Rs:compalex}.


We also obtain a Brownian motion comparison theorem
in the same way as in \cite{Rs:compalex}.  The detail is omitted here.

\begin{rem}
  All the results above are true even
  in the case where $M$ has non-empty boundary.
  In Corollaries \ref{cor:heatcomp} and \ref{cor:eigen},
  we implicitly assume the Neumann boundary condition on the boundary of $M$
  for the heat kernel and the first eigenvalue.
  In particular, the results hold for any convex subset of an
  Alexandrov space.
\end{rem}

Let us briefly mention the idea of the proof of Theorem \ref{thm:LapComp}.
One of the important steps is to prove
the Green formula on a region $E \subset M^*$ with piecewise smooth
boundary:
\[
\blap r_p(E) = \int_{\partial E} \langle \nu_E,\nabla r_p \rangle
\; d\Hm^{n-1},
\]
where $\nu_E$ is the inward normal vector
field along $\partial E$ of $E$ (Theorem \ref{thm:Green}).
For the proof of the Green formula, it is essential to prove that
$\bdiv_{g^{(h)}} Y \to \bdiv_g Y$ weakly $*$ as $h \to 0$
(Lemma \ref{lem:divconv}),
where $Y$ is any $C^\infty$ vector field on $M^*$,
$g^{(h)}$ the $C^\infty$ mollifier of the Riemannian metric $g$ on $M^*$,
and $\bdiv_g$ (resp.~$\bdiv_{g^{(h)}}$)
the distributional divergence with respect to $g$ (resp.~$g^{(h)}$).
Remark that to obtain this, we need some geometric property of
singularities of $M$ (see the proofs of Lemmas \ref{lem:gh} and
\ref{lem:divconv}) besides the BV property of $g$.

Using the Green formula, we prove the Laplacian Comparison
Theorem, \ref{thm:LapComp}.
Our idea is to approximate any region $E$ with piecewise smooth boundary
by the union of finitely many regions $A_k$,
where each $A_k$ forms the intersection
of some concentric annulus centered at $p$ of radii $r_k^- < r_k^+$
and a union of minimal geodesics emanating from $p$.
See Figure \ref{fig:E}.

\begin{figure}[htbp]
  \centering
  \input{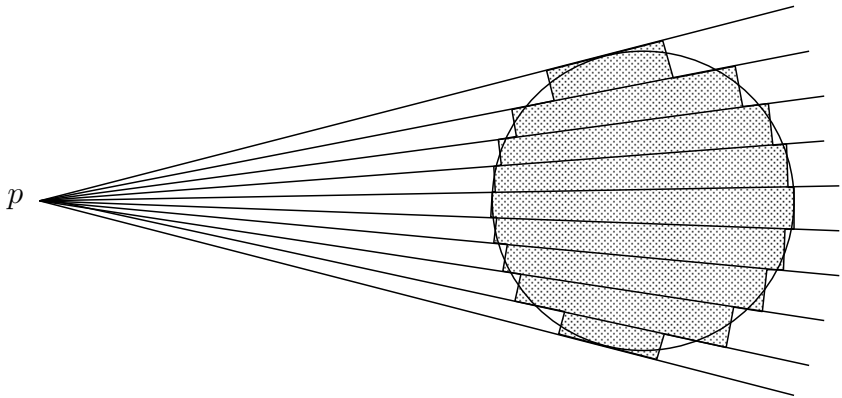}
  \caption{Approximate $E$ by $\bigcup_k A_k$.}
  \label{fig:E}
\end{figure}

Set $B_k^- := \partial B(p,r_k^-) \cap \partial A_k$
and $B_k^+ := \partial B(p,r_k^+) \cap \partial A_k$.
We assume that each $A_k$ is very thin, i.e., the diameters of $B_k^\pm$ are
very small.
We note that $\bigcup_k (B_k^- \cup B_k^+)$ approximates $\partial E$
and $\partial E$ has a division corresponding to $\{A_k\}$.
$B_k^\pm$ are all perpendicular to $\nabla r_p$ and
the area of $B_k^\pm$ is close to that of the corresponding part of
$\partial E$ multiplied by $\langle \nu_E,\nabla r_p \rangle$.
Since the cut-locus of $p$ could be very complex
(e.g. could be a dense subset), we need a delicate discussion.
Using $\BG(\kappa)$, we estimate
the difference between the areas of $B_k^-$ and $B_k^+$
by the volume of $A_k$.
Summing up this for all $k$, we have
an estimate of the right-hand side of the Green formula
for $E$ by the volume of $E$, that is,
\[
\blap r_p(E) \ge -(n-1) \sup_{x \in E} \cot_\kappa(r_p(x)) \;\Hm^n(E).
\]
This implies the Laplacian Comparison Theorem.

The organization of this paper is as follows.
In \S\ref{sec:prelim} we prepare Alexandrov spaces and BV functions.
In \S\ref{sec:BG} we prove some basic properties for
Condition $\BG(\kappa)$.
In \S\ref{sec:Green}, we perform some serious BV calculus on
Alexandrov spaces and prove the Green formula.
In \S\ref{sec:LapComp}, we give a proof of the Laplacian Comparison
Theorem, \ref{thm:LapComp}.
In the final section, \S\ref{sec:splitting}, we prove
Theorem \ref{thm:splitting} following the method of Cheeger-Gromoll
\cite{CG:split}.

\section{Preliminaries} \label{sec:prelim}

\subsection{Notation}  \label{ssec:theta}
Let $\theta(x)$ be some function of variable $x \in \R$ such that
$\theta(x) \to 0$ as $x \to 0$, and
$\theta(x|y_1,y_2,\dots)$ some function of variable $x \in \R$
depending on $y_1,y_2,\dots$ such that
$\theta(x|y_1,y_2,\dots) \to 0$ as $x \to 0$.
We use them like Landau's symbols.

\subsection{Alexandrov spaces and their structure} \label{ssec:Alex}

In this section, we present basics for Alexandrov spaces.
Refer \cite{BBI:course,BGP,OS:rstralex,Pr:DC} for the details.

Let $M$ be a \emph{geodesic space}, i.e., any two points
$p,q \in M$ can be joined by a length-minimizing curve,
called a \emph{minimal geodesic $pq$}.  Note that for given $p,q \in M$
a minimal geodesic $pq$ is not unique in general.
A \emph{triangle $\triangle pqr$ in $M$} means a set of three points
$p,q,r \in M$ (\emph{vertices}), and of three geodesics
$pq$, $qr$, $rp$ (\emph{edges}).
For a number $\kappa \in \R$, a \emph{$\kappa$-comparison triangle of
a triangle $\triangle pqr$ in $M$} is defined to be a triangle
$\triangle\tilde p \tilde q \tilde r$ in
a complete simply connected space form of curvature $\kappa$
with the property that
$d(p,q) = d(\tilde p,\tilde q)$, $d(q,r) = d(\tilde q,\tilde r)$,
$d(r,p) = d(\tilde r,\tilde p)$.
We denote by $\tilde\angle pqr$ the angle $\angle\tilde p \tilde q \tilde r$
between $\tilde q\tilde p$ and $\tilde q\tilde r$ at $\tilde q$
of $\triangle\tilde p \tilde q \tilde r$.
$\tilde\angle pqr$ is determined only by $d(p,q)$, $d(q,r)$, $d(r,p)$,
and $\kappa$.

\begin{defn}[Alexandrov Convexity]
  A subset $\Omega \subset M$ is said to satisfy
  the ($\kappa$-)\emph{Alexandrov convexity} if
  for any triangle $\triangle pqr \subset \Omega$,
  there exists a $\kappa$-comparison triangle
  $\triangle\tilde p\tilde q \tilde r$ such that
  for any $x \in pq$, $y \in pr$, $\tilde x \in \tilde p \tilde q$,
  $\tilde y \in \tilde p\tilde r$ with
  $d(p,x) = d(\tilde p,\tilde x)$, $d(p,y) = d(\tilde p,\tilde y)$
  we have
  \[
  d(x,y) \ge d(\tilde x,\tilde y).
  \]
\end{defn}


\begin{defn}[Lower Bound of Curvature, $\underline{\kappa}$]
  For a subset $\Omega \subset M$,
  we denote by $\underline{\kappa}(\Omega)$ the supremum
  of $\kappa \in \R$ for which $\Omega$ satisfies
  the $\kappa$-Alexandrov convexity.
  $\underline{\kappa}(\Omega)$ may be $+\infty$ or $-\infty$.
  For a point $x \in M$ we set
  $\underline{\kappa}(x) := \sup_U \underline{\kappa}(U)$,
  where $U$ runs over all neighborhoods of $x$.
\end{defn}

The function $\underline{\kappa} : M \to [\,-\infty,+\infty]$ is
lower semi-continuous.

\begin{defn}[Alexandrov Space]
  We say that $M$ is an \emph{Alexandrov space} if
  \begin{enumerate}
  \item $M$ is a complete geodesic space,
  \item $\underline{\kappa}(x) > -\infty$ for any $x \in M$,
  \item the Hausdorff dimension of $M$ is finite.
  \end{enumerate}
  An Alexandrov space $M$ is said to be of \emph{curvature $\ge \kappa$}
  if $\underline{\kappa}(M) \ge \kappa$.
  We usually assume the connectedness for Alexandrov spaces.
  However, we agree that a two-point space $M = \{p,q\}$
  is an Alexandrov space of curvature $\ge \pi^2/d(p,q)^2$.
\end{defn}

Let $M$ be an Alexandrov space.
Then, $M$ is \emph{proper}, i.e., any bounded subset is relatively compact.
If $M$ is of curvature $\ge \kappa > 0$, then $\diam M \le \pi/\sqrt{\kappa}$
and $M$ is compact.
By the globalization theorem, for any bounded subset $\Omega \subset M$,
there exists $R > 0$ such that
\[
\underline{\kappa}(\Omega)
\ge \inf_{x \in B(\Omega,R)} \underline{\kappa}(x) > -\infty,
\]
where $B(\Omega,R)$ is the $R$-neighborhood of $\Omega$.
In particular we have
\[
\underline{\kappa}(M) = \inf_{x \in M} \underline{\kappa}(x)\ (\ge -\infty).
\]

The Hausdorff dimension of (any open subset of) $M$
is a non-negative integer and
coincides with the covering dimension.
A zero-dimensional Alexandrov space is a one-point or two-point space.
A one-dimensional Alexandrov space is a one-dimensional complete
Riemannian manifold possibly with boundary.

Let $n$ be the dimension of $M$ and assume $n \ge 1$.
We take any point $p \in M$ and fix it.
Denote by $\Sigma_pM$ the space of directions at $p$,
and by $K_pM$ the tangent cone at $p$ (see \cite{BGP}).
$\Sigma_pM$ is an $(n-1)$-dimensional compact Alexandrov space
of curvature $\ge 1$ and
$K_pM$ an $n$-dimensional Alexandrov space of curvature $\ge 0$.
If $M$ is a Riemannian manifold, $\Sigma_pM$ and $K_pM$
are identified respectively
with the unit tangent sphere and the tangent space.


\begin{defn}[Singular Point, $\delta$-Singular Point]
  A point $p \in M$ is called a \emph{singular point of $M$}
  if $\Sigma_pM$ is not isometric to the unit sphere $S^{n-1}$.
  Let $\delta > 0$.
  We say that a point $p \in M$ is \emph{$\delta$-singular}
  if $\Hm^{n-1}(\Sigma_pM) \le \vol(S^{n-1})-\delta$.
  Let us denote the set of singular points of $M$ by
  $S_M$ and the set of $\delta$-singular points of $M$ by
  $S_\delta$.  
\end{defn}

We have $S_M = \bigcup_{\delta > 0} S_\delta$.
Since the map $M \ni p \mapsto \Hm^n(\Sigma_pM)$ is lower semi-continuous,
the $\delta$-singular set $S_\delta$ is a closed set and so the singular
set $S_M$ is a Borel set.
For a sufficiently small $\delta > 0$, any point in $M \setminus S_\delta$
has some Euclidean neighborhood.
For any geodesic segment $pq$ and any $x,y \in pq \setminus \{p,q\}$,
$\Sigma_xM$ and $\Sigma_yM$ are isometric to each other (\cite{Pt:parallel}).
Therefore, a geodesic joining two points in $M \setminus S_M$ is
entirely contained in $M \setminus S_M$.

\begin{defn}[Boundary]
  The boundary of an Alexandrov space $M$ is defined inductively.
  If $M$ is one-dimensional, then $M$ is a complete Riemannian manifold
  and the \emph{boundary of $M$} is defined as usual.
  Assume that $M$ has dimension $\ge 2$.
  A point $p \in M$ is a \emph{boundary point of $M$} if
  $\Sigma_pM$ has non-empty boundary.
\end{defn}

Any boundary point of $M$ is a singular point.
More strongly, the boundary of $M$ is contained in
$S_\delta$ for a sufficiently small $\delta > 0$, which follows from
the Morse theory in \cite{Pr:alex2,Pr:morse}.

The doubling theorem (\S 5 of \cite{Pr:alex2}; 13.2 of \cite{BGP})
states that if $M$ has non-empty boundary, then the \emph{double of $M$}
(i.e., the gluing of two copies of $M$ along their boundaries)
is an Alexandrov space without boundary and each copy of $M$ is
convex in the double.

Denote by $\hat S_M$ (resp.~$\hat S_\delta$)
the set of singular (resp.~$\delta$-singular) points of
$\dbl(M)$ contained in $M$, where we consider $M$ as a copy in
$\dbl(M)$.
We agree that $\hat S_M = S_M$ and $\hat S_\delta = S_\delta$ provided
$M$ has no boundary.

\begin{fact} \label{fact:str}
  For an Alexandrov space $M$ of dimension $n \ge 2$, we have the
  following {\rm(1)--(5)}.
  \begin{enumerate}
  \item  There exists a number $\delta_n > 0$ depending only on $n$
    such that
    $M^* := M \setminus \hat S_{\delta_n}$ is a manifold {\rm(}with
    boundary{\rm)}
    {\rm(\cite{BGP,Pr:alex2,Pr:morse})}
    and have a natural $C^\infty$ differentiable structure
    {\rm(}even on the boundary{\rm)}
    {\rm(\cite{KMS:lap})}.
  \item The Hausdorff dimension of $S_M$ is $\le n-1$
    {\rm(\cite{BGP, OS:rstralex})},
    and that of $\hat S_M$ is $\le n-2$ {\rm(\cite{BGP})}.
  \item We have a unique Riemannian metric $g$ on $M^* \setminus \hat S_M$
    such that the distance function induced from $g$ coincides with
    the original one of $M$ {\rm(\cite{OS:rstralex})}.
  \item For any $\delta$ with $0 < \delta \le \delta_n$,
    there exists a $C^\infty$ Riemannian metric
    $g_\delta$ on $M \setminus \hat S_\delta$ such that
    \[
    | g - g_\delta | < \theta(\delta|n)
    \quad\text{on $M^* \setminus \hat S_M$}
    \]
    {\rm(\cite{KMS:lap})}, where $\theta(\delta|n)$ is defined in
    \S\ref{ssec:theta}.
  \item A $C^\infty$ differentiable structure on
    $M^*$ satisfying {\rm(4)}
    is unique {\rm(\cite{KMS:lap})}.
    In this meaning, the $C^\infty$ structure is canonical.
  \end{enumerate}
\end{fact}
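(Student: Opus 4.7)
The plan is to assemble each of the five statements from the cited structural theorems on Alexandrov spaces, since Fact \ref{fact:str} is essentially a compilation rather than a new result. I would organize the discussion around the notion of an $(n,\delta)$-strainer at a point $p$: an $n$-tuple of pairs $(a_i,b_i)$ with $\tilde\angle a_i p b_i > \pi - \delta$ and $\tilde\angle a_i p a_j, \tilde\angle a_i p b_j > \pi/2 - \delta$ for $i\ne j$. A point is $\delta$-regular exactly when $\Sigma_p M$ is Gromov--Hausdorff $\delta$-close to $S^{n-1}$, and for $\delta$ small enough any such point admits an $(n,\delta)$-strainer.

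For (1), I would first invoke the Burago--Gromov--Perelman theorem that near a $\delta$-regular point the map $\Phi = (d(a_1,\cdot),\dots,d(a_n,\cdot))$ associated to a strainer is a bi-Lipschitz homeomorphism onto its image in $\R^n$; this yields the topological manifold structure on $M^* := M \setminus \hat S_{\delta_n}$ for $\delta_n$ small enough, via the Perelman--Morse machinery of \cite{Pr:alex2,Pr:morse}. The $C^\infty$ upgrade is more delicate and I would simply appeal to \cite{KMS:lap}, in which a distance-averaging procedure smooths the transition functions and produces a canonical atlas. Boundary points are handled by passing to the double $\dbl(M)$, which is Alexandrov and boundaryless, observing that $M$ sits convexly inside it, and restricting the smooth structure.

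For (2), the bound $\dim_\Hm S_M \le n-1$ follows from the Otsu--Shioya stratification combined with the strainer covering argument of \cite{BGP}: outside a set of Hausdorff dimension $\le n-1$ every point carries an $(n,\delta)$-strainer and is therefore bi-Lipschitz Euclidean. The sharper bound $\dim_\Hm \hat S_M \le n-2$ comes from the fact that a non-boundary singularity kills one more dimension than a boundary point, as follows from the doubling theorem and a refined strainer count. For (3), I would define $g$ almost everywhere by pulling back the Euclidean inner product through the distance chart at a $\delta$-regular point and invoke the Otsu--Shioya theorem that the induced length metric agrees with $d$ on $M^* \setminus \hat S_M$; uniqueness is automatic, since $g$ is pointwise determined by $d$ at density points of the regular set.

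For (4) and (5) the metric $g_\delta$ is obtained by mollifying $g$ in each chart of the canonical atlas and patching with a partition of unity subordinate to the $\delta$-regular cover; the uniform estimate $|g-g_\delta| < \theta(\delta|n)$ comes from the fact that at a $\delta$-regular point the tangent cone is $\theta(\delta|n)$-close to Euclidean space, so the distance chart is $\theta(\delta|n)$-close to an isometry. Uniqueness of the $C^\infty$ structure in (5) is then extracted from the rigidity of the transition functions of \cite{KMS:lap}, which are determined modulo $C^\infty$ diffeomorphism by $g$ itself. The principal obstacle throughout is that none of the ingredients is truly local -- each requires auxiliary points at a definite distance scale and a uniform control of the error terms in $\delta$ and $n$ -- which is exactly why the constructions in the cited papers are delicate enough to warrant citing rather than reproducing.
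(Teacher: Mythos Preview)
The paper gives no proof of this statement: it is labeled a \emph{Fact}, and each item is simply attributed to the cited references, with a brief remark afterward about extending the $C^\infty$ structure from $M\setminus B(S_{\delta_n},\epsilon)$ to all of $M^*$. Your recognition that this is a compilation rather than a new result is exactly right, and your sketch of what each citation contains is a reasonable reading guide that goes beyond what the paper itself provides.

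Two small points of friction between your sketch and the paper's conventions. First, the paper defines $\delta$-singularity via the volume defect $\Hm^{n-1}(\Sigma_pM)\le \vol(S^{n-1})-\delta$, not via Gromov--Hausdorff closeness to $S^{n-1}$; the two are related for small $\delta$ but are not the same condition. Second, the approximating metric $g_\delta$ in item~(4) is the one constructed in \cite{KMS:lap}, and should not be conflated with the mollified metric $g^{(h)}$ that the present paper builds later in \S\ref{sec:Green} by convolving $g$ in charts; the latter is the paper's own device for the Green formula, not the object asserted in Fact~\ref{fact:str}(4). Neither point is a genuine gap, since you are in any case deferring to the cited sources.
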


\begin{rem}
  In \cite{KMS:lap} we construct a $C^\infty$ structure only on
  $M \setminus B(S_{\delta_n},\epsilon)$.
  However this is independent of $\epsilon$ and extends to
  $M^*$.
  The $C^\infty$ structure is a refinement of the structures
  of \cite{OS:rstralex,Ot:secdiff,Pr:DC}.
  In particular, it is compatible with the DC structure of \cite{Pr:DC}.
\end{rem}

Note that the metric $g$ is defined only on $M^* \setminus \hat S_M$ and
does not continuously extend to any other point of $M$.
In general the non-singular set $M^* \setminus \hat S_M$ is not 
a manifold because $\hat S_M$ may be dense in $M$.

\begin{fact} \label{fact:g}
  $g$ is of locally bounded variation {\rm(\cite{Pr:DC};} see \S\ref{ssec:BV}
  below for functions of bounded variation{\rm)}.
  The tangent spaces at points in $M \setminus S_M$ is
  isometrically identified with the tangent
  cones {\rm(}\cite{OS:rstralex}{\rm)}.
  The volume measure on $M^*$ induced from $g$
  \[
  d\vol = d\vol_g := \sqrt{|g|} \; dx
  \]
  coincides with the $n$-dimensional Hausdorff measure $\Hm^n$
  {\rm(\cite{OS:rstralex})},
  where $dx := dx^1 \cdots dx^n$ is the Lebesgue measure on a chart.
  $g$ is \emph{uniformly elliptic} {\rm(\cite{OS:rstralex})}, i.e.,
  there exists a chart around each point in $M^*$
  on which
  \begin{itemize}
  \item[(UE)] the eigenvalues of $(g_{ij})$ are
    bounded away from zero and bounded from above.
  \end{itemize}
\end{fact}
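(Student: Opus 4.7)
The plan is to handle each of the four assertions separately, leaning on Perelman's DC structure and the Otsu--Shioya analysis of the regular part. For the locally BV regularity of $g$, I would first take Perelman's charts on $M^* \setminus \hat S_M$ in which each coordinate function is a difference of two semiconcave functions (DC). A semiconcave function on a Euclidean domain is locally Lipschitz, and its first partials are themselves BV because its Hessian is a signed Radon measure. Since the transition maps between DC charts are DC, their Jacobian matrices are BV, so the metric coefficients $g_{ij}$, obtained by pulling back the identity metric from one DC chart through another, are algebraic combinations of locally bounded BV functions. Using the fact that the BV class is closed under multiplication by bounded BV functions (in any dimension), I conclude $g_{ij} \in BV_{\loc}$.

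For the identification of tangent spaces with tangent cones at $p \in M \setminus S_M$, I would use that $\Sigma_p M$ is isometric to $S^{n-1}$, so $K_p M$ is flat $\R^n$. Following Otsu--Shioya, pick points $q_1,\dots,q_n$ whose directions $\uparrow_{pq_i}$ in $\Sigma_p M$ form an almost-orthonormal basis, and consider the distance coordinates $\varphi = (d(q_1,\cdot),\dots,d(q_n,\cdot))$. The first variation formula together with $\kappa$-Alexandrov convexity shows that $\varphi$ is differentiable at $p$ in the sense of metric tangent cones, and the differential is an isometry from $K_p M$ onto $\R^n$ equipped with the bilinear form $g(p)$. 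This same chart, deformed slightly near $p$, provides a bi-Lipschitz coordinate neighborhood in which the pairwise angles $\tilde\angle q_i p q_j$ are as close to $\pi/2$ as wanted, which in turn yields that the eigenvalues of $(g_{ij})$ stay bounded and bounded away from zero, establishing uniform ellipticity (UE).

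The equality $d\vol_g = d\Hm^n$ would follow by combining these ingredients with a differentiation argument. Since $g$ is BV, it admits Lebesgue points a.e., and at every such $p \in M^* \setminus \hat S_M$ where additionally $\Sigma_p M$ is a round sphere, the almost-isometric chart above yields
\[
\lim_{r \to 0^+} \frac{\Hm^n(B(p,r))}{\omega_n r^n} = 1
= \lim_{r \to 0^+} \frac{\vol_g(B(p,r))}{\omega_n r^n}.
\]
The Lebesgue--Besicovitch differentiation theorem applied to the Radon measure $d\vol_g$ against $d\Hm^n$ on the manifold $M^*$ then forces the two measures to coincide, since $\hat S_M$ has dimension $\le n-2$ and hence is $\Hm^n$-null.

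The hardest step is the BV assertion, because the singular set $\hat S_M$ may be dense in $M^*$: one cannot simply work on an open manifold away from singularities but must verify that the DC representation of $g_{ij}$ survives globally on $M^*$, including at points arbitrarily close to $\hat S_M$. Controlling the BV norm of $g$ uniformly across such sequences of charts, and checking compatibility with the $C^\infty$ structure of Fact~\ref{fact:str}(1), is the delicate technical issue; fortunately this is precisely what Perelman's construction \cite{Pr:DC} supplies, and the other three assertions then reduce to comparison-geometry computations in the Otsu--Shioya charts.
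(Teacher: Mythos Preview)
The paper does not supply a proof of this statement: it is recorded as a \emph{Fact} and simply attributed to \cite{Pr:DC} and \cite{OS:rstralex}, so there is no in-paper argument to compare against. Your sketch is a fair outline of the ideas one finds in those references, and the treatment of the tangent-cone identification, uniform ellipticity, and $d\vol_g=d\Hm^n$ via Otsu--Shioya distance coordinates is accurate.

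One point to correct in the BV part: the coefficients $g_{ij}$ are not obtained by ``pulling back the identity metric from one DC chart through another.'' The chart $\varphi$ is only bi-Lipschitz, not an isometry, so $(\varphi^{-1})^*\delta_{ij}$ is not $g$; and the intrinsic metric $g$ of Fact~\ref{fact:str}(3) is not defined via transition Jacobians. What must be shown is that the components of this intrinsic $g$ in a fixed DC chart are BV. Perelman's mechanism is rather that, in a chart built from distance functions, the components $g^{ij}$ are recovered algebraically from the first partials $\partial_i r_q$ of auxiliary distance functions $r_q$ (using identities such as $g^{ij}\partial_i r_q\,\partial_j r_q = 1$); those partials are locally bounded and BV by Lemma~\ref{lem:derDC}, and BV is stable under the bounded algebraic operations involved. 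Your identification of the ``hardest step'' is on target: the content of \cite{Pr:DC} is precisely that this representation works on all of $M^*$, including arbitrarily close to the (possibly dense) singular set.
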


We assume that all charts of $M^*$ satisfy (UE).

\begin{defn}[Cut-locus]
  Let $p \in M$ be a point.
  We say that a point $x \in M$ is a \emph{cut point of $p$}
  if no minimal geodesic $py$ from $p$ contains $x$ as an interior point.
  The set of cut points of $p$ is called the \emph{cut-locus of $p$} and
  denoted by $\Cut_p$.
\end{defn}

For the $W_{p,t}$ defined in \S\ref{sec:intro}, we have
$\bigcup_{0 < t < 1} W_{p,t} = X \setminus \Cut_p$.
Since $W_{p,t}$ is a closed set, the cut-locus $\Cut_p$ is a Borel set.
We have $\Hm^n(\Cut_p) = 0$ (Proposition 3.1 of \cite{OS:rstralex}).

By Lemma 4.1 of \cite{OS:rstralex},
$r_p := d(p,\cdot)$ is differentiable on
$M \setminus (S_M \cup \Cut_p \cup \{p\})$.
At any $x \in M \setminus (S_M \cup \Cut_p \cup \{p\})$
the gradient vector $\nabla r_p(x)$ coincides with
the tangent vector to the minimal geodesic from $p$ passing through $x$.
The gradient vector field $\nabla r_p$ is continuous at
all differentiable points.

\subsection{Analysis on Alexandrov spaces} \label{ssec:analysis}

Let $M$ be $n$-dimensional Alexandrov space and
$L^2(M)$ the Hilbert space consisting of all
real valued $L^2$ functions on $M$ with inner product
\[
(u,v)_{L^2} := \int_M uv\;d\Hm^n,
\qquad u,v \in L^2(M).
\]
We indicate the locally $L^2$ by $L^2_\loc$.
For a (uniformly elliptic) chart $(U;x^1,\dots,x^n)$ on $M^*$,
we denote by $D_i$ the distributional partial derivative
with respect to the coordinate $x^i$.
If $D_i u$ is a function for a function $u$,
we write it by $\partial_i u$.
Define $W^{1,2}(M)$ to be the set of all $u \in L^2(M)$
such that on each chart $(U;x^1,\dots,x^n)$ of $M^*$,
all $D_i u$, $i = 1,\dots,n$, are locally $L^2$ functions, $\partial_i u$, and
$\langle du,du\rangle := g^{ij}\,\partial_i u \,\partial_j u$
belongs to $L^1(M^*)$, where we follow Einstein's convention and
$\langle du,du\rangle$ is determined independent of the chart $U$.
$W^{1,2}_\loc(M)$ denotes the set of
$u \in L^2_\loc(M)$ such that all $D_iu$, $i = 1,\dots,n$, are
locally $L^2$ functions.
We define the symmetric bilinear form $\mathcal{E}$ on $W^{1,2}(M)$
by
\[
\mathcal{E}(u,v) := \int_{M^*} \langle du,dv \rangle \; d\Hm^n,
\qquad u,v \in W^{1,2}(M).
\]
We call $\mathcal{E}$ the \emph{Dirichlet energy form of $M$}.
$W^{1,2}(M)$ is a Hilbert space with inner product
$(u,v)_{L^2} + \mathcal{E}(u,v)$.
The pair $(\mathcal{E},W^{1,2}(M))$ becomes
a strongly local regular Dirichlet form in the sense of
 \cite{FOT} (Theorem 4.2 and Proposition 7.2 of \cite{KMS:lap}).
$\mathcal{E}(u,v)$ is defined for $u,v \in W^{1,2}_\loc(M)$ such that
$\supp v$ is compact in the same manner.

\begin{defn}[Sub-(super-)harmonicity] \label{defn:subh}
  A function $u \in W^{1,2}_\loc(M)$ is said to be
  \emph{$\mathcal{E}$-subharmonic} (resp.~\emph{$\mathcal{E}$-superharmonic})
  if for any $v \in C^\infty_0(M^*)$ with $v \ge 0$ we have
  $\mathcal{E}(u,v) \le 0$ (resp.~$\ge 0$).
\end{defn}

\begin{rem}
  Theorem 3.1 of \cite{KMS:lap} implies that
  $M \setminus M^*$ is an almost polar set in $M$.
  Therefore, the $\mathcal{E}$-sub(super)harmonicity defined here 
  is compatible with the terminology in \cite{Kw:maxprinsemi}.
\end{rem}

By Theorem 1.3 of \cite{Kw:maxprinsemi} and Theorem 3.1 of \cite{KMS:lap},
we have

\begin{lem}[Maximum Principle; \cite{Kw:maxprinsemi}] \label{lem:maxprin}
  Let $u \in W^{1,2}_\loc(M)$ be continuous and
  $\mathcal{E}$-subharmonic.
  If $u$ attains its maximum in $M$, then
  $u$ is constant on $M$.
\end{lem}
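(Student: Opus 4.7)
The plan is to apply the general strong maximum principle of \cite{Kw:maxprinsemi} to the Dirichlet form $(\mathcal{E}, W^{1,2}(M))$. That theorem asserts that on a strongly local, regular, irreducible Dirichlet space, any continuous $\mathcal{E}$-subharmonic function attaining its maximum must be constant. Strong locality and regularity of our form are already on record in the text preceding the lemma, as consequences of Theorem 4.2 and Proposition 7.2 of \cite{KMS:lap}. What remains is to verify irreducibility.

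To that end, I would first reduce to the regular part $M^* = M \setminus \hat S_{\delta_n}$. By Fact \ref{fact:str}(2), $M \setminus M^* \subseteq \hat S_{\delta_n} \subseteq \hat S_M$ has Hausdorff dimension at most $n-2$. Combined with the uniform ellipticity of $g$ on charts of $M^*$ (Fact \ref{fact:g}), this yields that $M \setminus M^*$ is $\mathcal{E}$-polar, which is precisely the content of Theorem 3.1 of \cite{KMS:lap} invoked in the Remark above. Next, I would observe that $M^*$ is connected: removing a subset of Hausdorff codimension $\ge 2$ from the connected Alexandrov space $M$ cannot disconnect it, since on any uniformly elliptic chart around an interior point of $M^*$ the singular set inside the chart has topological codimension $\ge 2$. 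Irreducibility then follows from connectedness of $M^*$ together with strong locality and the negligibility of $M \setminus M^*$ under $\mathcal{E}$-capacity.

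Once the three hypotheses are in place, Theorem 1.3 of \cite{Kw:maxprinsemi} applied to the continuous $\mathcal{E}$-subharmonic function $u$ gives the conclusion. Equivalently, one can argue directly: setting $c := \max_M u$ and $w := c - u \ge 0$, the level set $F = \{w = 0\}$ is closed by continuity; the De Giorgi-Nash-Moser Harnack inequality for non-negative weak supersolutions of the divergence-form operator $-D_i(\sqrt{|g|}\,g^{ij}\partial_j)$ in each uniformly elliptic chart forces $F \cap M^*$ to be open in $M^*$; and connectedness of $M^*$ together with density then forces $F = M$.

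The main obstacle is the behavior at singular points, where $g$ is only BV and possibly dense in $M$, making the usual chart-based arguments unavailable there. This is overcome by the two geometric inputs noted above: the polarity of $M \setminus M^*$ (which removes the singular set from the Dirichlet-form picture) and the Hausdorff-codimension-$2$ bound for $\hat S_M$ (which preserves connectedness of $M^*$); the weaker codimension-$1$ bound for $S_M$ alone would not suffice.
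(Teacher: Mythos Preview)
Your proposal is correct and follows the same route as the paper, which simply cites Theorem 1.3 of \cite{Kw:maxprinsemi} together with Theorem 3.1 of \cite{KMS:lap} (the almost-polarity of $M \setminus M^*$) before stating the lemma. You have essentially unpacked that citation, making explicit the verification of irreducibility via connectedness of $M^*$ and the codimension-$2$ bound, and additionally sketched a direct Harnack-based argument; both are sound elaborations of what the paper leaves implicit.
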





\subsection{BV functions} \label{ssec:BV}

We mention basics for BV functions needed in this paper.
For more details we refer to \cite{AFP}.

Let $U \subset \R^n$ be an open subset.

\begin{defn}[Approximate Limit] \label{defn:approxlim}
  We say that locally $L^1$ function $u : U \to \R$ has
  \emph{approximate limit at $x \in U$} if
  there exists $z \in \R$ such that
  \[
  \lim_{r \to 0} \frac{1}{|B(x,r)|} \int_{B(x,r)} |u(y) - z|\;dy = 0,
  \]
  where
  $B(x,r)$ is the Euclidean ball centered at $x$ of radius $r$
  and $|B(x,r)|$ its Lebesgue measure.
  Denote by $S_u$ the set of $x \in U$ where $u$ does not have
  approximate limit.
  For $x \in U \setminus S_u$, the above $z$ is unique and set
  $\tilde u(x) := z$.
  The function $\tilde u : U \setminus S_u \to \R$ is called the
  \emph{approximate limit of $u$}.
\end{defn}

$S_u$ is a Borel set and satisfies $\Hm^n(S_u) = 0$.
$\tilde u$ is a Borel function.

\begin{lem}[cf.~Proposition 3.64 of \cite{AFP}] \label{lem:approxlim}
  \begin{enumerate}
  \item For any bounded locally $L^1$ functions $u_1,u_2 : U \to \R$
    we have
    \begin{gather*}
      S_{u_1+u_2} \subset S_{u_1} \cup S_{u_2},
      \qquad
      S_{u_1u_2} \subset S_{u_1} \cup S_{u_2},\\
      \widetilde{u_1+u_2} = \tilde u_1 + \tilde u_2,
      \quad
      \widetilde{u_1u_2} = \tilde u_1 \tilde u_2
      \quad \text{on $U \setminus (S_{u_1} \cup S_{u_2})$}.
    \end{gather*}
  \item For any Lipschitz function $f : \R \to \R$ and
    locally $L^1$ function $u : U \to \R$ we have
    \[
    S_{f \circ u} \subset S_u,
    \qquad
    \widetilde{f \circ u} = f \circ \tilde u
    \quad \text{on $U \setminus S_u$}.
    \]
  \end{enumerate}
\end{lem}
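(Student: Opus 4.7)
The plan is to verify each conclusion directly from the definition of approximate limit, by estimating the relevant $L^1$ average via the triangle inequality and splitting off a piece that vanishes as $r \to 0$ using the $L^1$-approximate convergence of $u_i$ (resp.\ $u$) to $\tilde u_i(x)$ (resp.\ $\tilde u(x)$).

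For the sum in (1), fix $x \in U \setminus (S_{u_1} \cup S_{u_2})$ and set $z_i := \tilde u_i(x)$. The pointwise bound
\[
|(u_1+u_2)(y) - (z_1 + z_2)| \le |u_1(y) - z_1| + |u_2(y) - z_2|
\]
gives, after averaging over $B(x,r)$ and letting $r \to 0$, that $u_1 + u_2$ has approximate limit $z_1 + z_2$ at $x$. This simultaneously yields $S_{u_1+u_2} \subset S_{u_1} \cup S_{u_2}$ and the formula for $\widetilde{u_1+u_2}$. For the product I would use the decomposition
\[
u_1(y)u_2(y) - z_1 z_2 = u_1(y)\bigl(u_2(y) - z_2\bigr) + z_2\bigl(u_1(y) - z_1\bigr),
\]
setting $C := \|u_1\|_{L^\infty} < \infty$; since $|z_2|$ is bounded by $\|u_2\|_{L^\infty}$ (it is a limit of averages of $u_2$), we obtain
\[
|u_1 u_2 - z_1 z_2| \le C\,|u_2 - z_2| + \|u_2\|_{L^\infty}\,|u_1 - z_1|.
\]
Averaging over $B(x,r)$, both terms tend to zero by the definition of $\tilde u_1(x)$ and $\tilde u_2(x)$, so $\widetilde{u_1 u_2}(x) = z_1 z_2$ and $S_{u_1 u_2} \subset S_{u_1} \cup S_{u_2}$.

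For (2), let $L$ denote a Lipschitz constant for $f$. Then for any $x \in U \setminus S_u$,
\[
|f(u(y)) - f(\tilde u(x))| \le L\,|u(y) - \tilde u(x)|,
\]
and averaging over $B(x,r)$ shows at once that $f \circ u$ has approximate limit $f(\tilde u(x))$ at $x$, proving both inclusions.

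There is no substantial obstacle here; the statements are soft consequences of the defining $L^1$ limit. The only point where any care is needed is the cross term $u_1(y)(u_2(y) - z_2)$ in the product estimate, which is precisely why the boundedness hypothesis on $u_1, u_2$ is imposed in (1). The argument parallels Proposition 3.64 of \cite{AFP}.
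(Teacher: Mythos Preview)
Your proposal is correct; the argument is the standard direct verification from the definition of approximate limit, and the only place requiring the boundedness hypothesis is indeed the cross term in the product estimate, which you handle properly. The paper itself does not supply a proof of this lemma but simply refers to Proposition~3.64 of \cite{AFP}, so your write-up is essentially the argument one finds there.
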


\begin{defn}[Approximate Jump Point]
  For a locally $L^1$ function $u : U \to \R$,
  a point $x \in U$ is called an \emph{approximate jump point of $u$}
  if there exist $a,b \in \R$ and $\nu \in S^{n-1}$ such that
  $a \neq b$ and
  \begin{align*}
    \lim_{\rho \to 0} \frac{1}{|B^+_\nu(x,\rho)|}
    \int_{B^+_\nu(x,\rho)} | u(y)-a|\;dy &= 0,\\
    \lim_{\rho \to 0} \frac{1}{|B^-_\nu(x,\rho)|}
    \int_{B^-_\nu(x,\rho)} | u(y)-b|\;dy &= 0,
  \end{align*}
  where $B^+_\nu(x,\rho) := \{ y \in B(x,\rho) \mid \langle y-x,\nu \rangle
  > 0 \}$ and $B^-_\nu(x,\rho) := \{ y \in B(x,\rho) \mid
  \langle y-x,\nu \rangle < 0 \}$.
  Denote by $J_u$ the set of 
  approximate jump point of $u$.
\end{defn}

$J_u$ is a Borel set and satisfies $J_u \subset S_u$
(cf.~Proposition 3.69 of \cite{AFP}).

\begin{defn}[BV Function]
  An $L^1$ function $u : U \to \R$ is of \emph{BV}
  (\emph{bounded variation}) if
  the distributional derivatives $D_i u$, $i=1,\dots,n$,
  are all finite Radon measures.
  $|D_iu|$ denotes the total variation measure of $D_iu$.
\end{defn}

\begin{lem}[cf.~Lemma 3.76 of \cite{AFP}] \label{lem:Du0}
  Let $u : U \to \R$ be a BV function and
  $B \subset U$ a Borel set.
  \begin{enumerate}
  \item If $\Hm^{n-1}(B) = 0$, then $|D_iu|(B) = 0$.
  \item If $\Hm^{n-1}(B) < +\infty$ and $B \cap S_u = \emptyset$, then
    $|D_iu|(B) = 0$.
  \end{enumerate}
\end{lem}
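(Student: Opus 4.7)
The plan is to invoke the standard Federer--Vol'pert decomposition of the distributional derivative of a BV function into its absolutely continuous, jump, and Cantor parts (see \cite{AFP}), and then to verify the two claims separately for each piece. Concretely I would write
\[
D_i u = D_i^a u + D_i^j u + D_i^c u,
\]
where $|D_i^a u| \ll \Hm^n$, $|D_i^j u|$ is absolutely continuous with respect to $\Hm^{n-1}$ restricted to $J_u$ (with explicit density of the form $|u^+-u^-|\,|(\nu_u)_i|$), and $|D_i^c u|$ vanishes on every Borel set of $\sigma$-finite $\Hm^{n-1}$ measure. This reduces each claim to checking that each of the three summands gives zero mass to $B$.

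For (1), the hypothesis $\Hm^{n-1}(B)=0$ forces $\dim_H B \le n-1$ and hence $\Hm^n(B)=0$, so the absolutely continuous part contributes nothing; the jump part contributes at most $\int_{B \cap J_u}|u^+-u^-|\,d\Hm^{n-1}=0$ since $\Hm^{n-1}(B\cap J_u)=0$; and a set of zero $\Hm^{n-1}$-measure is trivially of $\sigma$-finite $\Hm^{n-1}$-measure, so the Cantor part also vanishes on $B$. Summing the three contributions gives $|D_i u|(B)=0$. For (2), the same dimensional argument $\Hm^{n-1}(B)<+\infty \Rightarrow \dim_H B \le n-1 \Rightarrow \Hm^n(B)=0$ kills the absolutely continuous part; the inclusion $J_u \subset S_u$ together with $B \cap S_u = \emptyset$ forces $B\cap J_u = \emptyset$ and kills the jump part; and the Cantor part vanishes directly by the $\sigma$-finiteness property applied to $B$ itself.

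The only real obstacle I anticipate is invoking, rather than reproducing, the fine structure theorem for BV functions and in particular the non-trivial fact that the Cantor part assigns zero mass to every Borel set of finite $\Hm^{n-1}$-measure; these are standard but delicate results from \cite{AFP}. Once they are in hand, the proof is essentially bookkeeping, which is presumably why the authors state the lemma as a direct citation.
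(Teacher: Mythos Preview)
Your argument is correct and is exactly the standard proof via the decomposition $D_iu = D_i^au + D_i^ju + D_i^cu$ into absolutely continuous, jump, and Cantor parts. The paper itself does not give a proof of this lemma: it is stated in the preliminaries with the attribution ``cf.~Lemma~3.76 of \cite{AFP}'' and is simply quoted as a known fact, so there is nothing further to compare against.
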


\begin{lem}[Federer-Vol'pert; cf.~Theorem 3.78 of \cite{AFP}]
  For any BV function $u : U \to \R$ we have
  $\Hm^{n-1}(S_u \setminus J_u) = 0$.
\end{lem}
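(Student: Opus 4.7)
The plan is to show that $\Hm^{n-1}$-a.e.\ point of $S_u$ actually lies in $J_u$, using the finite Radon measure $\mu := \sum_{i=1}^n |D_iu|$ on $U$ together with a blow-up analysis.

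First I would reduce to the set of finite $(n-1)$-upper density for $\mu$. By the standard density theorem for Radon measures, the set
\[
E := \{x \in U : \limsup_{r\to 0}\, r^{-(n-1)}\, \mu(B(x,r)) = \infty\}
\]
satisfies $\Hm^{n-1}(E) = 0$, so by Lemma \ref{lem:Du0}(1) it contributes nothing to what we wish to estimate. Hence it suffices to prove $\Hm^{n-1}((S_u \setminus J_u) \setminus E) = 0$.

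Next, for each $x \in (S_u \setminus J_u) \setminus E$, I would study the rescalings $u_r(y) := u(x + ry)$ on $B(0,1)$. The density hypothesis at $x$ gives a uniform bound on $|Du_r|(B(0,1)) = r^{-(n-1)}\mu(B(x,r))$ as $r \to 0$, and after subtracting suitable mean-value constants $c_r$ one controls $\|u_r - c_r\|_{L^1(B(0,1))}$ via the BV Poincar\'e inequality. BV compactness then yields a subsequence $u_{r_k} - c_{r_k} \to v$ in $L^1_\loc(\R^n)$ with $v \in BV_\loc(\R^n)$.

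The main obstacle is to show that for $\Hm^{n-1}$-a.e.\ such $x$ the blow-up limit $v$ is either constant or a two-valued jump function $a\chi_{H^+} + b\chi_{H^-}$ for some halfspace $H^\pm$ with unit normal $\nu$. Here I would invoke the BV coarea formula $\mu = \int_\R P(\{u > t\}, \cdot)\,dt$ together with De~Giorgi's structure theorem (the reduced boundary of a finite-perimeter set is $(n-1)$-rectifiable, with an approximate tangent hyperplane $\Hm^{n-1}$-a.e.), which lets us transfer rectifiability from the level-set boundaries to a generic point of $S_u$. A constant blow-up would contradict $x \in S_u$ (after passing to the precise representative), while a two-valued jump blow-up would force $x \in J_u$, contradicting $x \in S_u \setminus J_u$. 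Ruling out intermediate (Cantor-type) behavior $\Hm^{n-1}$-a.e.\ is the genuinely substantive step, and is the content of the full Federer--Vol'pert decomposition; since the result is used only as a preliminary tool in what follows, one may alternatively appeal directly to Theorem 3.78 of \cite{AFP}.
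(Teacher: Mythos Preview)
The paper does not prove this lemma at all; it is stated as a preliminary fact with a reference to Theorem~3.78 of \cite{AFP} and no argument is given. Your sketch is a reasonable outline of the standard proof (and is essentially the strategy in \cite{AFP}), but since the paper simply cites the result, your final remark that one may appeal directly to Theorem~3.78 of \cite{AFP} already matches the paper's treatment exactly.
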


\begin{lem}[cf.~Theorem 3.96 of \cite{AFP}] \label{lem:chain}
  \begin{enumerate}
  \item For any BV functions $u_1, u_2 : U \to \R$ and $c_1, c_2 \in \R$,
    the linear combination $c_1 u_1 + c_2 u_2$ is also of BV
    and satisfies
    \[
    D_i(c_1 u_1 + c_2 u_2) = c_1 D_i u_1 + c_2 D_i u_2.
    \]
  \item Assume $|U| < \infty$.
    If $u : U \to \R$ is a BV function with $\inf u > 0$, $\sup u < +\infty$,
    and $\Hm^{n-1}(J_u) = 0$, and if
    $f : (\,0,+\infty\,) \to \R$ is a $C^1$ function, then
    $f \circ u$ is of BV and
    \[
    D_i(f \circ u) = (f' \circ \tilde u) D_i u.
    \]
  \end{enumerate}
\end{lem}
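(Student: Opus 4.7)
\textbf{Proof plan for Lemma \ref{lem:chain}.}

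Part (1) is essentially immediate: the distributional derivative $D_i$ is linear on $L^1_\loc(U)$, so $D_i(c_1 u_1 + c_2 u_2) = c_1 D_i u_1 + c_2 D_i u_2$ as distributions, and the right-hand side is a finite Radon measure being a linear combination of two such.

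For Part (2), the plan is standard mollification together with the classical pointwise chain rule, followed by a weak limit that exploits the hypothesis $\Hm^{n-1}(J_u)=0$. First I would note that, since $a:=\inf u>0$ and $b:=\sup u<\infty$, we may multiply $f$ by a smooth cutoff supported in a compact subinterval of $(\,0,\infty\,)$ containing $[a,b]$ without altering $f\circ u$ or $f'\circ\tilde u$; thus we may assume $f\in C^1_c((\,0,\infty\,))$ with $f'$ bounded and uniformly continuous. Combined with $|U|<\infty$, this gives $f\circ u\in L^1(U)$. The key observation before taking limits is that $\Hm^{n-1}(J_u)=0$ and the Federer-Vol'pert lemma imply $\Hm^{n-1}(S_u)=0$, so by Lemma \ref{lem:Du0}(1) we have $|D_i u|(S_u)=0$; consequently $f'\circ\tilde u$ is well-defined and bounded $|D_i u|$-a.e., and $(f'\circ\tilde u)\,D_i u$ is a finite signed Radon measure.

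Next, let $\rho_\epsilon$ be a standard mollifier and set $u_\epsilon:=u*\rho_\epsilon$ on any $V\Subset U$ with $\operatorname{dist}(V,\partial U)>\epsilon$. The classical chain rule for the smooth function $u_\epsilon$ gives
\[
\partial_i(f\circ u_\epsilon)=(f'\circ u_\epsilon)\,\partial_i u_\epsilon
\qquad\text{pointwise.}
\]
For any $\varphi\in C^\infty_c(V)$, the left-hand side, tested against $\varphi$, converges to $\langle D_i(f\circ u),\varphi\rangle$ because $f\circ u_\epsilon\to f\circ u$ in $L^1$ (use $|f(u_\epsilon)-f(u)|\le\|f'\|_\infty|u_\epsilon-u|$). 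The heart of the proof is to identify the weak limit of the right-hand side. Writing $\partial_i u_\epsilon\,dx=(D_i u)*\rho_\epsilon$ as a convolution of measures and using Fubini,
\[
\int_V\varphi(f'\circ u_\epsilon)\,\partial_i u_\epsilon\,dx
=\int_U\!\Bigl(\int_V\varphi(x)(f'\circ u_\epsilon)(x)\rho_\epsilon(x-y)\,dx\Bigr)\,dD_i u(y).
\]
For $y\notin S_u$, the inner integral converges to $\varphi(y)(f'\circ\tilde u)(y)$: continuity of $\varphi$ handles the $\varphi$-factor, and uniform continuity of $f'$ together with the approximate-limit property of $\tilde u$ at $y$ handle the $f'\circ u_\epsilon$-factor. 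Since $|D_i u|(S_u)=0$ and the integrand is uniformly bounded by $\|\varphi\|_\infty\|f'\|_\infty$, dominated convergence yields the limit $\int\varphi(f'\circ\tilde u)\,dD_i u$. Equating the two limits shows $D_i(f\circ u)=(f'\circ\tilde u)\,D_i u$ as finite Radon measures, so $f\circ u\in\mathrm{BV}(U)$ with the claimed derivative.

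\textbf{Main obstacle.} The delicate point is Step 5, the identification of the weak limit on the right-hand side. The naive argument fails because $f'\circ u_\epsilon$ converges only pointwise on $U\setminus S_u$, and $\partial_i u_\epsilon\,dx$ converges only weakly as measures, so one cannot simply multiply weak limits. The hypothesis $\Hm^{n-1}(J_u)=0$ is what makes the argument close up: via Federer-Vol'pert it upgrades to $\Hm^{n-1}(S_u)=0$, and hence through Lemma \ref{lem:Du0}(1) ensures the limiting measure $D_i u$ ignores the bad set where pointwise convergence of $f'\circ u_\epsilon$ fails, allowing the Fubini + dominated convergence argument to succeed.
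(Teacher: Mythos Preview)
The paper does not give its own proof of this lemma; it is simply cited from Theorem~3.96 of \cite{AFP}.  Your mollification argument is correct and is essentially the standard route in the special situation where the jump part vanishes.  The cited Theorem~3.96 in \cite{AFP} is actually more general: it treats arbitrary bounded BV functions and produces the full Vol'pert chain rule
\[
D_i(f\circ u)=(f'\circ\tilde u)\,\widetilde D_i u+\bigl(f(u^+)-f(u^-)\bigr)\nu_i\,\Hm^{n-1}\lfloor_{J_u},
\]
with a separate contribution on the jump set.  The hypothesis $\Hm^{n-1}(J_u)=0$ in the present lemma kills that second term, and, as you correctly observed, also forces $\Hm^{n-1}(S_u)=0$ via Federer--Vol'pert, hence $|D_iu|(S_u)=0$.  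This is precisely what allows your direct ``Fubini plus dominated convergence'' step to close up; without it one must treat the jump part by a genuinely different argument (blow-up), which is what \cite{AFP} does.  So your approach is a legitimate shortcut tailored to the extra hypothesis, while the reference proves a strictly stronger statement.
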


\begin{lem}[Leibniz Rule; cf.~Example 3.97 in \S 3.10 of \cite{AFP}]
  \label{lem:Leibniz}
  For any bounded BV functions $u_1, u_2 : U \to \R$ we have
  the following {\rm(1)} and {\rm(2)}.
  \begin{enumerate}
  \item $u_1u_2$ is of BV.
  \item If $\Hm^{n-1}(J_{u_1} \cap J_{u_2}) = 0$, then
    \[
    D_i(u_1u_2) = \tilde u_1 D_i u_2 + \tilde u_2 D u_1
    \]
    and $J_{u_1u_2}$ is contained in
    $(J_{u_1} \setminus S_{u_2}) \cup (J_{u_2} \setminus S_{u_1})$
    upto an $\Hm^{n-1}$-negligible set.
  \end{enumerate}
\end{lem}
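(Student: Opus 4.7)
The plan is to mollify both functions and pass to the limit in the classical smooth Leibniz rule, using Federer--Vol'pert to convert the jump-set hypothesis into a statement about measures.

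For (1), I would let $\rho_h$ be a standard symmetric mollifier on $\R^n$ and set $u_i^{(h)} := u_i * \rho_h$. These are smooth, bounded by $\|u_i\|_\infty$, and converge to $u_i$ in $L^1_\loc(U)$. The classical Leibniz rule
\[
\partial_j(u_1^{(h)} u_2^{(h)}) = u_1^{(h)} \partial_j u_2^{(h)} + u_2^{(h)} \partial_j u_1^{(h)}
\]
gives, for any $\varphi \in C^1_c(U)$ and $h$ small enough that the $h$-neighborhood of $\supp\varphi$ lies in $U$,
\[
\Bigl|\int u_1^{(h)} u_2^{(h)} \, \partial_j \varphi \, dx\Bigr|
\le \|\varphi\|_\infty \bigl( \|u_1\|_\infty |Du_2|(U) + \|u_2\|_\infty |Du_1|(U) \bigr).
\]
Taking $h \to 0$ shows that $D_j(u_1 u_2)$ defines a bounded linear functional on $C^1_c(U)$, hence a finite Radon measure, which proves (1).

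For (2), I would first check that the right-hand side makes sense as a signed Radon measure. By Lemma \ref{lem:Du0} combined with Federer--Vol'pert, $|D_j u_2|$ is carried, modulo an $\Hm^{n-1}$-null set, by $(U \setminus S_{u_2}) \cup J_{u_2}$, and the hypothesis $\Hm^{n-1}(J_{u_1} \cap J_{u_2}) = 0$ then forces $|D_j u_2|(S_{u_1}) = 0$; symmetrically for the other term. Thus $\tilde u_1 D_j u_2$ and $\tilde u_2 D_j u_1$ are well-defined signed Radon measures. To identify the distributional derivative, I would pass to the limit in the mollified Leibniz identity tested against $\varphi \in C^1_c(U)$; the key step is
\[
\int u_1^{(h)} \partial_j u_2^{(h)} \, \varphi \, dx \;\longrightarrow\; \int \varphi \, \tilde u_1 \, dD_j u_2,
\]
and symmetrically. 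This combines the weak-$*$ convergence $\partial_j u_2^{(h)} \, dx \rightharpoonup D_j u_2$ with the uniform $L^\infty$ bound on $u_1^{(h)}\varphi$ and the pointwise convergence $u_1^{(h)} \to \tilde u_1$ at every Lebesgue point of $u_1$, a set of full $|D_j u_2|$-measure by the previous observation.

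The jump-set inclusion follows from Lemma \ref{lem:approxlim}(1): at any $x \notin S_{u_1} \cup S_{u_2}$ the product $u_1 u_2$ has approximate limit $\tilde u_1(x)\tilde u_2(x)$, so $J_{u_1 u_2} \subset S_{u_1 u_2} \subset S_{u_1} \cup S_{u_2}$, and applying Federer--Vol'pert together with the hypothesis on $J_{u_1}\cap J_{u_2}$ leaves exactly $(J_{u_1} \setminus S_{u_2}) \cup (J_{u_2} \setminus S_{u_1})$ modulo $\Hm^{n-1}$-null sets. The main obstacle will be the limit step in (2): because $u_1^{(h)}$ converges pointwise to $\tilde u_1$ only off $S_{u_1}$, while $D_j u_2$ may carry mass on $J_{u_2}$, the hypothesis $\Hm^{n-1}(J_{u_1} \cap J_{u_2}) = 0$ is precisely what is needed to guarantee that $|D_j u_2|$-a.e.\ point lies outside $S_{u_1}$, preventing the spurious jump contribution of the form $(a_1^+ - a_1^-)(a_2^+ - a_2^-)$ on $J_{u_1} \cap J_{u_2}$ that would otherwise appear on the right-hand side.
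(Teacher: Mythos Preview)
The paper does not prove this lemma; it is quoted from Example~3.97 of \cite{AFP}, where it is obtained from the Vol'pert chain rule (Theorem~3.96 there) applied to the map $(a,b)\mapsto ab$ together with the decomposition of $Du_i$ into absolutely continuous, Cantor, and jump parts. Your mollification route is a legitimate alternative, and your treatment of~(1), of the well-definedness of $\tilde u_1\,D_j u_2$ via $|D_j u_2|(S_{u_1})=0$, and of the jump-set inclusion is correct.

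There is, however, a real gap at the step you yourself flag as the main obstacle. The inference ``$f_h$ uniformly bounded, $f_h\to f$ pointwise $|\mu|$-a.e., and $\mu_h\to\mu$ weak-$*$ $\Rightarrow$ $\int f_h\,d\mu_h\to\int f\,d\mu$'' is false in general: take $\mu_h=\delta_{x_h}$ with $x_h\to x_0$, $\mu=\delta_{x_0}$, and bounded $f_h$ with $f_h(x_0)=0$ but $f_h(x_h)=1$. In your setting $\partial_j u_2^{(h)}\,dx$ may concentrate mass in an $h$-neighbourhood of $J_{u_2}$ while $u_1^{(h)}$ is still varying there, so the three ingredients you list do not by themselves yield the limit. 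A clean repair uses the evenness of $\rho_h$ to transpose the convolution onto the test function,
\[
\int_U u_1^{(h)}\varphi\,\partial_j u_2^{(h)}\,dx
=\int_U u_1^{(h)}\varphi\,\bigl((D_j u_2)*\rho_h\bigr)\,dx
=\int_U \bigl[(u_1^{(h)}\varphi)*\rho_h\bigr]\,dD_j u_2,
\]
after which the integrand on the right is bounded by $\|u_1\|_\infty\|\varphi\|_\infty$ and, at each $y\notin S_{u_1}$, converges to $\tilde u_1(y)\varphi(y)$ (the iterated mollification only samples $u_1$ on $B(y,2h)$, and $y$ is a Lebesgue point). Since you have already shown $|D_j u_2|(S_{u_1})=0$, dominated convergence with respect to the \emph{fixed} measure $D_j u_2$ now gives the limit. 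Alternatively, the \cite{AFP} argument sidesteps this issue entirely by never mollifying $Du_2$.
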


\subsection{DC functions} \label{ssec:DC}

Let $\Omega$ be an open subset of an Alexandrov space.
($\Omega$ is allowed to be an open subset of $\R^n$.)
A function $u : \Omega \to \R$ is said to be \emph{convex}
if $u \circ \gamma$ is a convex function
for any geodesic $\gamma$ in $\Omega$.

\begin{defn}[DC Function]
  A locally Lipschitz function $u : \Omega \to \R$ is of \emph{DC}
  if it is locally represented as the difference of two convex functions,
  i.e., for any $p \in \Omega$ there exists two convex functions
  $v$ and $w$ on some neighborhood $U$ of $p$
  in $\Omega$ such that $u|_{U} = v - w$ on $U$.
\end{defn}

\begin{lem}[cf.~\S 6.3 of \cite{EG:meas}] \label{lem:derDC}
  For any DC function $u$ on $\Omega \subset \R^n$,
  the partial derivatives
  $\partial_i u$, $i = 1,2,\dots,n$, are all locally bounded and
  locally BV functions.
\end{lem}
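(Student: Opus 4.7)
The statement is entirely local, so I would pick an arbitrary $p \in \Omega$, choose a neighborhood $U \subset \Omega$ of $p$ on which $u|_U = v - w$ for convex $v,w : U \to \R$, and shrink $U$ to be a convex open ball. By Lemma~\ref{lem:chain}(1) locally BV is preserved under linear combinations, and boundedness of partial derivatives on compact subsets is preserved under subtraction, so it suffices to prove the lemma for a single convex function on $U$.

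For a convex function $v : U \to \R$ I would invoke two classical facts. First, $v$ is locally Lipschitz on $U$, so by Rademacher's theorem each partial derivative $\partial_i v$ exists a.e.\ and is bounded on every compact subset of $U$ by a local Lipschitz constant of $v$; this settles local boundedness. Second, the distributional Hessian $(D_{ij} v)_{i,j=1}^n$ is a matrix of locally finite Radon measures: for $i=j$, convexity of $v$ along the $i^{\text{th}}$ coordinate shows that $D_{ii} v$ is a nonnegative distribution, hence a locally finite Radon measure; for $i\neq j$, the identity
\[
2 D_{ij} v = D_{e_i+e_j,\,e_i+e_j} v - D_{ii} v - D_{jj} v,
\]
combined with the convexity of $v$ along the direction $e_i+e_j$, exhibits $D_{ij} v$ as a difference of two nonnegative locally finite Radon measures, hence as a signed Radon measure. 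In particular $D_j(\partial_i v)$ is a locally finite Radon measure for every $j$, which is exactly the locally BV property of $\partial_i v$.

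Applying this to both $v$ and $w$, taking differences, and covering $\Omega$ by a countable family of such neighborhoods $U$ completes the proof. This is essentially \S 6.3 of \cite{EG:meas} restated in the DC setting. I do not foresee any serious obstacle: the lemma just packages the classical fact that the Hessian of a convex function is a matrix-valued Radon measure, transferred to DC functions by the linearity of the distributional derivative.
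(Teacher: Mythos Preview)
Your proposal is correct and is exactly the argument the paper is pointing to: the lemma is stated in the paper without proof, simply citing \S 6.3 of \cite{EG:meas}, and what you have written is precisely the content of that reference (Aleksandrov's theorem that the distributional Hessian of a convex function is a matrix-valued Radon measure, transferred to DC functions by linearity).
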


\begin{lem}[Perelman; \S 3 of \cite{Pr:DC}] \label{lem:chartDC}
  Let $M$ be an $n$-dimensional Alexandrov space.
  For any chart $(U,\varphi)$ in $M^*$ and any DC function $u$ on $U$,
  $u \circ \varphi^{-1}$ is of DC on $\varphi(U) \subset \R^n$.
  In particular, $\partial_i u := \partial_i(u \circ \varphi^{-1})$,
  $i = 1,2,\dots,n$, are all locally bounded and locally BV functions.
\end{lem}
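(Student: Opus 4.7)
The plan is to deduce the statement from Perelman's construction of a DC atlas on $M^*$, combined with the chain rule for DC functions under DC maps; the lemma is essentially a translation between the intrinsic Alexandrov notion of DC and the Euclidean one, and is cited to \cite{Pr:DC}.

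First, I would invoke the compatibility of the canonical $C^\infty$ structure on $M^*$ with Perelman's DC atlas, as recorded in the Remark following Fact \ref{fact:str}. Concretely, each chart $(U,\varphi)$ in $M^*$ is DC-compatible in the sense that, after a $C^\infty$ change of coordinates, $\varphi$ is built from distance functions to well-chosen reference points; thus the coordinate functions $\varphi^i : U \to \R$ are DC on $U$ in the Alexandrov sense, while the components of the inverse $\varphi^{-1} : \varphi(U) \to U$ are DC as functions on the Euclidean domain $\varphi(U) \subset \R^n$.

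Second, I would show that $u \circ \varphi^{-1}$ is DC on $\varphi(U)$ whenever $u$ is DC on $U$. Locally around any point fix a neighborhood on which $u = v - w$ with $v, w$ convex in the Alexandrov sense. Then $u \circ \varphi^{-1} = v \circ \varphi^{-1} - w \circ \varphi^{-1}$, so it suffices to verify that the composition of a convex (hence locally Lipschitz) function on $U$ with the DC map $\varphi^{-1}$ is DC on an open subset of $\R^n$. This is exactly the DC chain rule proved in \S 3 of \cite{Pr:DC}: since $\varphi^{-1}$ has DC components and $v$ is DC (being convex), their composition is DC in the Euclidean sense, and likewise for $w$. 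Subtracting gives the DC property of $u \circ \varphi^{-1}$.

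Third, once $u \circ \varphi^{-1}$ is known to be a DC function on an open subset of $\R^n$, Lemma \ref{lem:derDC} gives at once that the Euclidean partial derivatives $\partial_i(u \circ \varphi^{-1})$ are locally bounded and locally BV, which is the second assertion.

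The main obstacle is the second step: the Alexandrov-intrinsic definition of DC (via restrictions to geodesics in $M$, which can be highly non-smooth in the chart) is a priori unrelated to the Euclidean-intrinsic one (via restrictions to straight segments in $\R^n$). Bridging the two requires Perelman's delicate choice of distance-function coordinates which interact well with both notions of convexity, and the nontrivial verification that DC is preserved by composition with such coordinate maps.
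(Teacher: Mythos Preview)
The paper does not give its own proof of this lemma; it is stated as a citation to Perelman, \S 3 of \cite{Pr:DC}, and no argument is supplied. So there is nothing in the paper to compare against.

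That said, your sketch is an accurate reconstruction of the content of \cite{Pr:DC}: the key ingredients are precisely (i) that the charts on $M^*$ are, up to $C^\infty$ change, built from distance functions and hence DC-compatible, and (ii) Perelman's closure of the DC class under composition with DC coordinate maps. Your identification of the main obstacle---that Alexandrov-convexity along metric geodesics is a priori unrelated to Euclidean convexity along straight segments in the chart---is exactly the point that requires work in \cite{Pr:DC}, and is handled there by exploiting the specific structure of distance-function coordinates together with semiconcavity estimates. One small refinement: rather than arguing via $v\circ\varphi^{-1}$ and $w\circ\varphi^{-1}$ separately, it is cleaner (and closer to Perelman's formulation) to observe directly that the composition of a DC function with a DC homeomorphism whose inverse is also DC is again DC; this avoids having to verify that a single Alexandrov-convex function pushes forward to a Euclidean DC function, which is not quite immediate on its own. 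Once the first assertion is in hand, the second is indeed just Lemma~\ref{lem:derDC}.
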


\begin{lem}[cf.~\cite{Pr:DC}] \label{lem:rpDC}
  For any point $p$ in an Alexandrov space $M$,
  $r_p(x) := d(p,x)$ is a DC function on $M \setminus \{p\}$.
\end{lem}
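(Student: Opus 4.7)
The plan mirrors Perelman's approach in \cite{Pr:DC}, proceeding in two steps: first, establishing Alexandrov-sense semi-concavity of $r_p$ by direct triangle comparison; second, upgrading semi-concavity to a DC decomposition.

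Fix $x_0 \in M \setminus \{p\}$. Using lower semi-continuity of $\underline{\kappa}$ together with the globalization property discussed in \S\ref{ssec:Alex}, I choose a bounded neighborhood $U$ of $x_0$ with $p \notin \bar U$ and $\underline{\kappa}(U) \ge \kappa$ for some $\kappa \in \R$. For any unit-speed geodesic $\gamma : [a,b] \to U$ and any $a \le s \le s+2h \le b$, I apply $\kappa$-Alexandrov convexity to the triangle $\triangle p\,\gamma(s)\,\gamma(s+2h)$ with $\gamma(s+h)$ as the midpoint of $\gamma(s)\gamma(s+2h)$, pass to its $\kappa$-comparison triangle in $M^n(\kappa)$, and exploit the explicit $\cot_\kappa$-bound on the second derivative of the distance from $\tilde p$ along geodesics in the model space. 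This yields the midpoint inequality
\[
r_p(\gamma(s+h)) \ge \frac{1}{2}\bigl(r_p(\gamma(s)) + r_p(\gamma(s+2h))\bigr) - \frac{\lambda}{2}\,h^2
\]
with $\lambda = \lambda(\kappa, \min_{\bar U} r_p) < +\infty$, which is equivalent to $r_p \circ \gamma(t) - (\lambda/2)\,t^2$ being concave along every unit-speed geodesic, i.e., to $r_p$ being $\lambda$-concave on $U$.

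For the DC step, I would follow the DC-chart machinery of \cite{Pr:DC}: pass to a DC chart $(V, \varphi)$ around $x_0$ on $M^* \setminus \hat S_M$, verify that $r_p \circ \varphi^{-1}$ has distributional Hessian bounded above on $\varphi(V) \subset \R^n$ (so that, being Euclidean-semi-concave, it decomposes as $\frac{\lambda'}{2}|x|^2 - (\frac{\lambda'}{2}|x|^2 - r_p \circ \varphi^{-1})$, a difference of two Euclidean-convex functions), and then translate this decomposition back into an Alexandrov-sense DC representation of $r_p$ on $V$. Points lying in $\hat S_M$ are handled separately, using the doubling construction of Fact \ref{fact:str} to reduce to the case without boundary.

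The main obstacle is the translation between Alexandrov-sense and Euclidean-sense convexity inside a chart: geodesics of $M$ are generally not straight lines in $\varphi$, and the Riemannian metric $g$ is only locally BV by Fact \ref{fact:g}. Consequently the Euclidean-semi-concavity of $r_p \circ \varphi^{-1}$ does not follow from its Alexandrov-semi-concavity by naive pullback, and Perelman's detailed analysis of DC curves and BV metrics in \cite{Pr:DC} is essential to complete the argument.
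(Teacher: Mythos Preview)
The paper itself gives no proof of this lemma; it simply records the statement with a reference to \cite{Pr:DC}. So there is no ``paper's proof'' to match, and your proposal must be judged on its own.

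Your Step~1 (Alexandrov $\lambda$-concavity of $r_p$ away from $p$ via triangle comparison) is correct and standard.

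Step~2, however, is both more complicated than necessary and incomplete. Two concrete issues:
\begin{itemize}
\item The direction you need---from Euclidean DC in a chart back to intrinsic (Alexandrov-sense) DC on $M$---is \emph{not} the direction provided by Lemma~\ref{lem:chartDC}, which only says intrinsic DC $\Rightarrow$ Euclidean DC in a chart. Pulling a Euclidean-convex function $v$ on $\varphi(V)$ back to $v\circ\varphi$ does not yield an Alexandrov-convex function, because $M$-geodesics are not straight lines in the chart. You correctly flag this as the main obstacle and then defer entirely to \cite{Pr:DC}; but that leaves the heart of Step~2 unproved.
\item Your chart argument only covers $M^*=M\setminus\hat S_{\delta_n}$. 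Doubling handles boundary points, but interior $\delta_n$-singular points lie in $M\setminus M^*$ and have no chart at all, so the decomposition is missing in their neighborhoods. The lemma asserts DC on \emph{all} of $M\setminus\{p\}$.
\end{itemize}

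A much shorter intrinsic route avoids both problems. Once you know $r_p$ is $\lambda$-concave on a neighborhood $U$ of $x_0$, use the standard fact (Perelman; see also \cite{BGP,Pr:alex2}) that every point of an Alexandrov space has a neighborhood carrying a strictly concave function $f$, say with $(f\circ\gamma)''\le -c<0$ along every unit-speed geodesic. Then for $C:=\lambda/c$ the function $-Cf$ is Alexandrov-convex on $U$, and so is $-Cf-r_p$ because along any unit-speed geodesic $((-Cf-r_p)\circ\gamma)''\ge Cc-\lambda=0$. Hence
\[
r_p=(-Cf)-\bigl(-Cf-r_p\bigr)
\]
is an intrinsic DC decomposition valid near \emph{every} $x_0\in M\setminus\{p\}$, including points of $\hat S_{\delta_n}$. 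No charts, no BV metric analysis, and no translation step are needed.
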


\section{Condition $\BG(\kappa)$} \label{sec:BG}

In this section, we mention some elementary and obvious properties of
Condition $\BG(\kappa)$.
Let $M$ be an $n$-dimensional Alexandrov space and
$p \in M$ a point.
For a subset $C \subset M$,
let $A_p(C) \subset M$ be the union of images of minimal geodesics
from $p$ intersecting $C$.
For $r > 0$ and $0 \le r_1 \le r_2$, we set
\begin{align*}
  a_C(r) &:= \Hm^{n-1}(A_p(C) \cap \partial B(p,r)),\\
  A_{r_1,r_2}(C) &:= \{ x \in A_p(C) \mid r_1 \le r_p(x) \le r_2\}.
\end{align*}

\begin{lem} \label{lem:artheta}
  Let $0 < r_1 \le r_2 \le R$ and $t := r_1/r_2$.
  Then we have
  \[
  a_C(r_1) \ge (1-\theta(t-1|R,\kappa_0)) \,a_C(r_2),
  \]
  where $\kappa_0$ is a lower bound of curvature on $B(p,2R)$,
  i.e., $\kappa_0 := \underline{\kappa}(B(p,2R))$, and
  $\theta(\cdots)$ is defined in \S\ref{ssec:theta}.
  In particular, if $a_C(r_0) = 0$ for a number $r_0 > 0$, then
  $a_C(r) = 0$ for any $r \ge r_0$.
  Moreover, we have $a_C(r+) \le a_C(r) \le a_C(r-)$ for any $r > 0$
  and $a_C$ has at most countably many discontinuity points.
\end{lem}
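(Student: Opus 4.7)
The plan is to compare $a_C(r_1)$ and $a_C(r_2)$ by exhibiting a Lipschitz surjection $E\to A_p(C)\cap\partial B(p,r_2)$ from a subset $E$ of $A_p(C)\cap\partial B(p,r_1)$, and then applying the standard Hausdorff-measure estimate for Lipschitz maps. Setting $t:=r_1/r_2$ and $E:=W_{p,t}\cap A_p(C)\cap\partial B(p,r_1)$, the surjectivity $\Phi_{p,t}(E)\supset A_p(C)\cap\partial B(p,r_2)$ is direct: any $y$ in the latter lies on some minimal geodesic $\gamma\colon[0,\ell]\to M$ from $p$ with $\gamma([0,\ell])\cap C\neq\emptyset$ and $\gamma(r_2)=y$, so $x:=\gamma(r_1)\in E$ satisfies $\Phi_{p,t}(x)=y$.

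The key technical step is to bound the Lipschitz constant of $\Phi_{p,t}|_E$. For $x_1,x_2\in E$ with $y_i:=\Phi_{p,t}(x_i)$, I form a $\kappa_0$-comparison triangle $\triangle\tilde p\tilde y_1\tilde y_2$ and let $\tilde x_i\in\tilde p\tilde y_i$ be at distance $r_1=tr_2$ from $\tilde p$. By $\kappa_0$-Alexandrov convexity applied to $\triangle py_1y_2$, $d(x_1,x_2)\ge d(\tilde x_1,\tilde x_2)$, while $d(y_1,y_2)=d(\tilde y_1,\tilde y_2)$ by construction. A direct law-of-cosines computation in the $\kappa_0$-model, writing $\alpha:=\tilde\angle y_1\tilde p y_2$, bounds $d(\tilde y_1,\tilde y_2)/d(\tilde x_1,\tilde x_2)$ uniformly in $\alpha\in(0,\pi]$ by
\[
L\;:=\;\max\bigl\{\,r_2/r_1,\;s_{\kappa_0}(r_2)/s_{\kappa_0}(r_1)\,\bigr\},
\]
and $L-1\le\theta(t-1|R,\kappa_0)$. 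Hence $d(y_1,y_2)\le L\,d(x_1,x_2)$, so the standard estimate $\Hm^{n-1}(\Phi_{p,t}(E))\le L^{n-1}\Hm^{n-1}(E)$ yields $a_C(r_2)\le L^{n-1}a_C(r_1)$, equivalent to the claimed bound after absorbing the exponent into $\theta$. The main obstacle is verifying the $\alpha$-uniformity of the model ratio; a case analysis shows the supremum is approached as $\alpha\to 0$ (giving $s_{\kappa_0}(r_2)/s_{\kappa_0}(r_1)$) when $\kappa_0\le 0$, and attained at $\alpha=\pi$ (giving $r_2/r_1$) when $\kappa_0>0$.

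For the vanishing statement, I pick $\rho>1$ with $\theta(\rho-1|R,\kappa_0)<1$ and iterate the main inequality in finitely many steps of ratio $\le\rho$ to extend $a_C(r_0)=0$ to $a_C\equiv 0$ on $[r_0,\infty)$ (using successively larger $R$ if needed). For the one-sided inequalities on $a_C(r\pm)$, the main inequality with $r_1=s\uparrow r$ and $r_2=r$ gives $\liminf_{s\uparrow r}a_C(s)\ge a_C(r)$; symmetrically with $r_1=r$ and $r_2=s\downarrow r$ gives $a_C(r)\ge\limsup_{s\downarrow r}a_C(s)$. Finally, the inequality $a_C(r_2)\le L^{n-1}a_C(r_1)$ with the explicit form of $L$ implies that $a_C(r)/\sigma(r)^{n-1}$ is non-increasing in $r$, where $\sigma:=s_{\kappa_0}$ if $\kappa_0\le 0$ and $\sigma(r):=r$ if $\kappa_0>0$; continuity and positivity of $\sigma$ then transfer the existence of one-sided limits and countability of the discontinuity set to $a_C$ itself.
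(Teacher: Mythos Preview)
Your proof is correct and follows exactly the paper's approach: the authors also exhibit $\Phi_{p,t}$ as a Lipschitz surjection from $A_p(C)\cap\partial B(p,r_1)\cap W_{p,t}$ onto $A_p(C)\cap\partial B(p,r_2)$, invoke Alexandrov convexity to see that the Lipschitz constant tends to $1$ as $t\to 1$, and then simply write ``which proves the rest.'' Your version supplies the details they omit---the explicit constant $L$ via the model law of cosines, and the observation that $a_C/\sigma^{n-1}$ is monotone, which makes the one-sided-limit and countability claims transparent---but the underlying argument is identical.
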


\begin{proof}
  To prove the first assertion, we
  assume that $A_p(C) \cap \partial B(p,r_2)$ is nonempty.
  The map $\Phi_{p,t} : A_p(C) \cap \partial B(p,r_1) \cap W_{p,t}
  \to A_p(C) \cap \partial B(p,r_2)$ defined in \S\ref{sec:intro}
  is surjective and Lipschitz continuous by the Alexandrov convexity.
  If $t$ is close to $1$, then
  so is the Lipschitz constant of $\Phi_{p,t}$.
  Therefore we have the first assertion of the lemma,
  which proves the rest.
\end{proof}

Lemma \ref{lem:artheta} implies the integrability of $a_C(r)$.
The same proof as for a Riemannian manifold leads to
\begin{equation}
  \label{eq:area}
  \Hm^n(A_{r_1,r_2}(C)) = \int_{r_1}^{r_2} a_C(r)\;dr
\end{equation}
for any $0 \le r_1 \le r_2$.

For a function $f : (\,\alpha,\beta\,) \to \R$, we set
\[
\bar f'(x) := \limsup_{h \to 0} \frac{f(x+h)-f(x)}{h}
\in \R \cup \{-\infty,+\infty\}, \quad \alpha < x < \beta.
\]

\begin{lem} \label{lem:ar}
  The following {\rm(1)--(3)} are equivalent to each other.
  \begin{enumerate}
  \item $\BG(\kappa)$ at $p$.
  \item For any subset $C \subset M$ and $0 < r_1 \le r_2$
    {\rm(}with $r_2 < \pi/\sqrt{\kappa}$ if $\kappa > 0${\rm)},
    we have
    \[
    a_C(r_1) \ge \frac{s_\kappa(r_1)^{n-1}}{s_\kappa(r_2)^{n-1}} a_C(r_2).
    \]
  \item For any $C \subset M$ and $r > 0$ with $a_C(r) > 0$
    {\rm(}and with $r < \pi/\sqrt{\kappa}$ if $\kappa > 0${\rm)}, we have
    \[
    \bar a_C'(r) \le (n-1)\cot_\kappa(r)\,a_C(r).
    \]
  \end{enumerate}
\end{lem}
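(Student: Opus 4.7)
The plan is to establish $(1) \Leftrightarrow (2)$ and $(2) \Leftrightarrow (3)$ separately. For $(1) \Leftrightarrow (2)$ I use the identity \eqref{eq:area} together with two elementary observations: $\Phi_{p,t}$ is radial scaling from $p$ by factor $1/t$, and $\Phi_{p,t}(x) \in A_p(C)$ forces $x \in A_p(C)$, since the minimal segment $p\Phi_{p,t}(x)$ passes through $x$ and extends to meet $C$. For $(2) \Leftrightarrow (3)$ I recast both conditions in terms of the quotient $\phi(r) := a_C(r)/s_\kappa(r)^{n-1}$. Throughout, when $\kappa > 0$ I silently impose $r_2 < \pi/\sqrt{\kappa}$.

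For $(1) \Rightarrow (2)$, fix $C$, $0 < r_1 \le r_2$, $t := r_1/r_2$, and $\epsilon > 0$. Applying the density form of $\BG(\kappa)$ to the thin cone shell $A := A_p(C) \cap \{r_2 - \epsilon \le r_p \le r_2\}$, and using the inclusion $\Phi_{p,t}^{-1}(A) \cap W_{p,t} \subset A_p(C) \cap \{r_1 - t\epsilon \le r_p \le r_1\}$, the identity \eqref{eq:area} yields
\[
\int_{r_1 - t\epsilon}^{r_1} a_C(r)\, dr \ge \int_{r_2 - \epsilon}^{r_2} \frac{t\,s_\kappa(ts)^{n-1}}{s_\kappa(s)^{n-1}}\,a_C(s)\, ds.
\]
Dividing by $\epsilon$ and passing to the limit at continuity points of $a_C$ (cocountably many, by Lemma \ref{lem:artheta}) gives (2) at those points, and the semi-continuity $a_C(r+) \le a_C(r) \le a_C(r-)$ extends it to all $r_1 \le r_2$. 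For the converse $(2) \Rightarrow (1)$, I would establish the Radon--Nikodym density bound $d(\Phi_{p,t\,*}\Hm^n)/d\Hm^n \ge t\,s_\kappa(tr_p)^{n-1}/s_\kappa(r_p)^{n-1}$ at $\Hm^n$-a.e.\ point. Pick a generic $y \in M \setminus (\Cut_p \cup S_M)$ with a unique minimal geodesic from $p$, set $r_y := d(p,y)$, and consider Borel neighborhoods $C_y(\delta) \subset \partial B(p, r_y)$ of $y$ with $\delta \to 0$. Since $d(p,z) = r_y$ for $z \in C_y(\delta)$ and $d(p,x)/t \le r_y$ for $x \in A_p(C_y(\delta)) \cap \{t(r_y - \epsilon) \le r_p \le tr_y\}$, the minimal segment containing a generic such $x$ extends within the cone so that $\Phi_{p,t}(x) \in A_p(C_y(\delta)) \cap \{r_y - \epsilon \le r_p \le r_y\}$. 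This reverse inclusion, together with \eqref{eq:area} and (2), yields
\[
\Phi_{p,t\,*}\Hm^n(A_p(C_y(\delta)) \cap \{r_y - \epsilon \le r_p \le r_y\}) \ge \int_{A_p(C_y(\delta)) \cap \{r_y - \epsilon \le r_p \le r_y\}} \frac{t\,s_\kappa(tr_p)^{n-1}}{s_\kappa(r_p)^{n-1}}\,d\Hm^n,
\]
and shrinking $\delta, \epsilon \to 0$ produces the density bound at $y$. The Lebesgue decomposition of $\Phi_{p,t\,*}\Hm^n$ then delivers (1) as an inequality of measures.

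For $(2) \Leftrightarrow (3)$, a Leibniz computation in the Dini sense gives $\bar\phi'(r) = [\bar a_C'(r) - (n-1)\cot_\kappa(r)\,a_C(r)]/s_\kappa(r)^{n-1}$ on $\{a_C > 0\}$, so (3) is equivalent to $\bar\phi'(r) \le 0$ there, while (2) is exactly the monotonicity $\phi(r_1) \ge \phi(r_2)$ for $r_1 \le r_2$. The direction $(2) \Rightarrow (3)$ is immediate, and $(3) \Rightarrow (2)$ is the standard Dini-derivative monotonicity principle: $\bar\phi' \le 0$ combined with the one-sided semi-continuity $\phi(r+) \le \phi(r) \le \phi(r-)$ (inherited from Lemma \ref{lem:artheta}) implies that $\phi$ is non-increasing on $\{a_C > 0\}$. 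The main obstacle is the measure-theoretic step in $(2) \Rightarrow (1)$: translating the one-dimensional, cone-localized inequality into a pointwise density bound $\Hm^n$-a.e.\ on $M$ requires checking that the radial cones form a differentiation basis at almost every point and carefully discarding the $\Hm^n$-null but potentially dense exceptional sets $\Cut_p$, $S_M$, and the branching locus.
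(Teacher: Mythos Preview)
Your argument for $(1)\Rightarrow(2)$ and $(2)\Leftrightarrow(3)$ is essentially the paper's proof: thin-shell limits together with Lemma~\ref{lem:artheta}, and the reformulation via the ratio $a_C/s_\kappa^{n-1}$ (the paper uses $\log a_C - (n-1)\log s_\kappa$, which amounts to the same thing).

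The substantive difference is in $(2)\Rightarrow(1)$. You attempt to recover the pointwise density bound by localising at a generic point $y$, using radial cone--shells $A_p(C_y(\delta))\cap\{r_y-\epsilon\le r_p\le r_y\}$ as a differentiation basis; you then flag this as the ``main obstacle'' because it requires a Vitali-type property for these non-ball sets and careful handling of the dense exceptional sets $\Cut_p$, $S_M$. The paper bypasses this entirely. Rather than fixing a point and shrinking neighbourhoods, it fixes an arbitrary compact set $E$ and slices it by spheres: for each $r$ one takes $C:=\partial B(p,r)\cap E$, notes that $A_p(C)\cap\partial B(p,tr)=\Phi_{p,t}^{-1}(\partial B(p,r)\cap E)$, and applies (2) directly to this $C$ with $r_1=tr$, $r_2=r$. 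Integrating over $r$ via \eqref{eq:area} then gives
\[
\Phi_{p,t\,*}\Hm^n(E)\;\ge\;\min_{r_E^-\le r\le r_E^+}\frac{t\,s_\kappa(tr)^{n-1}}{s_\kappa(r)^{n-1}}\,\Hm^n(E),
\]
from which the measure inequality in $\BG(\kappa)$ follows by shrinking $E$ (ordinary Lebesgue differentiation with balls suffices, since the density is continuous). The key trick you are missing is that (2) is already stated for \emph{arbitrary} $C$, so one may take $C$ to be a sphere-slice of the test set rather than a small neighbourhood of a single point; this converts the problem from differentiation to integration and eliminates the obstacle you identified.
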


\begin{proof}
  (1) $\implies$ (2):
  We fix $0 < r_1 \le r_2$.
  Assume that $r_2 < \pi/\sqrt{\kappa}$ if $\kappa > 0$.
  For a sufficiently small $\delta > 0$, we set
  $t_\delta := r_1/(r_2-\delta)$.
  Since $\Phi_{p,t_\delta}^{-1}(A_{r_2-\delta,r_2}(C)) \subset
  A_{r_1,t_\delta r_2}(C)$, we have by $\BG(\kappa)$ at $p$,
  \[
  \Hm^n(A_{r_1,t_\delta r_2}(C))
  \ge \min_{r_2-\delta \le r \le r_2}
  \frac{t_\delta s_\kappa(t_\delta r)^{n-1}}{s_\kappa(r)^{n-1}}
  \Hm^n(A_{r_2-\delta,r_2}(C)).
  \]
  We multiply the both sides of this formula by $1/(t_\delta \delta)$
  and take the limit as $\delta \to 0$.
  Then, remarking Lemma \ref{lem:artheta} we obtain (2).

  (2) $\implies$ (1):
  Let $E \subset M$ be a compact subset and set
  $r_E^- := \inf_{x \in E} r_p(x)$, $r_E^+ := \sup_{x \in E} r_p(x)$.
  We assume $r_E^+ < \pi/\sqrt{\kappa}$ if $\kappa > 0$.
  For $r \in [\,r_E^-,r_E^+\,]$, we set $C := \partial B(p,r) \cap E$,
  $r_2 := r$, and $r_1 := tr$.
  (2) implies
  \[
  \Hm^{n-1}(\Phi_{p,t}^{-1}(\partial B(p,r) \cap E)) \ge
  \frac{s_\kappa(tr)^{n-1}}{s_\kappa(r)^{n-1}}
  \Hm^{n-1}(\partial B(p,r) \cap E).
  \]
  Integrating this with respect to $r$ over $[\,r_E^-,r_E^+\,]$
  yields
  \[
  \Phi_{p,t\,*}\Hm^n(E) = \Hm^n(\Phi_{p,t}^{-1}(E))
  \ge \min_{r_E^- \le r \le r_E^+} 
  \frac{t \,s_\kappa(tr)^{n-1}}{s_\kappa(r)^{n-1}}
  \Hm^n(E),
  \]
  which implies $\BG(\kappa)$ at $p$.

  (2) $\Longleftrightarrow$ (3):
  We set $a(r) := a_C(r)$ for simplicity.
  Let $0 < r_1 \le r_2$ be any numbers such that
  $r_2 < \pi/\sqrt{\kappa}$ if $\kappa > 0$.
  In (2) we may assume that $r_1 < r_2$ and $a(r_1),a(r_2) > 0$,
  so that (2) is equivalent to
  \[
  \frac{\log a(r_2) - \log a(r_1)}{r_2 - r_1}
  \le (n-1) \frac{\log s_\kappa(r_2) - \log s_\kappa(r_1)}{r_2 - r_1}.
  \]
  This is also equivalent to that
  for any $r > 0$ with $a(r) > 0$ (and with $r < \pi/\sqrt{\kappa}$
  if $\kappa > 0$),
  \[
  \overline{(\log\circ a)}'(r) \le (n-1)\cot_\kappa(r).
  \]
  The left-hand side of this is equal to $\bar a'(r)/a(r)$.
\end{proof}

\begin{cor} \label{cor:rp}
  If $\kappa > 0$ and if $M$ satisfies $\BG(\kappa)$ at $p \in M$,
  then
  \[
  r_p \le \pi/\sqrt{\kappa}.
  \]
\end{cor}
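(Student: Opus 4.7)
My plan is to argue by contradiction: assume some $y_0 \in M$ has $r_p(y_0) > \pi/\sqrt{\kappa}$, and drive it to a contradiction through the area function $a_M(r)$. The key idea is to apply Lemma~\ref{lem:ar} with $C = M$: since $M$ is a geodesic space, every $y \in M$ lies on a minimal geodesic from $p$, so $A_p(M) = M$ and hence $a_M(r) = \Hm^{n-1}(\partial B(p,r))$.

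First I will fix $r_1 \in (0,\pi/\sqrt{\kappa})$ with $a_M(r_1) < \infty$. This is possible because \eqref{eq:area} with $C = M$ gives $\int_0^R a_M(r)\,dr = \Hm^n(\{x : r_p(x) \le R\}) < \infty$ (using properness of $M$ and local finiteness of $\Hm^n$), so $a_M \in L^1_{\loc}(0,\infty)$ and is a.e.\ finite. Rearranging Lemma~\ref{lem:ar}(2) then yields
\[
a_M(r_2) \;\le\; \frac{s_\kappa(r_2)^{n-1}}{s_\kappa(r_1)^{n-1}}\,a_M(r_1)
\qquad\text{for every } r_1 \le r_2 < \pi/\sqrt{\kappa}.
\]
Since $s_\kappa(r_2) \to 0$ as $r_2 \to \pi/\sqrt{\kappa}$, the right-hand side tends to $0$; hence the left limit $a_M\bigl((\pi/\sqrt{\kappa})-\bigr)$ vanishes. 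The one-sided-limit inequality $a_M(r) \le a_M(r-)$ in Lemma~\ref{lem:artheta} then forces $a_M(\pi/\sqrt{\kappa}) = 0$, and the zero-propagation part of the same lemma extends this to $a_M(r) = 0$ for every $r \ge \pi/\sqrt{\kappa}$.

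Feeding this back into \eqref{eq:area} gives $\Hm^n(\{x \in M : \pi/\sqrt{\kappa} \le r_p(x) \le R\}) = 0$ for every $R > \pi/\sqrt{\kappa}$, so the open set $U := \{x \in M : r_p(x) > \pi/\sqrt{\kappa}\}$ is $\Hm^n$-null. On the other hand $U$ contains $y_0$ and is therefore nonempty; since $M^*$ is open and dense in $M$ and carries the positive Riemannian volume $\sqrt{|g|}\,dx = d\Hm^n$ (Facts~\ref{fact:str} and~\ref{fact:g}), every nonempty open subset of $M$ has strictly positive $\Hm^n$-measure, contradicting $\Hm^n(U) = 0$. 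The one subtle point will be the passage past $r = \pi/\sqrt{\kappa}$, where Lemma~\ref{lem:ar}(2) no longer applies directly and the conclusion must be extracted from the upper-semicontinuity-from-the-left and the zero-propagation built into Lemma~\ref{lem:artheta}; everything else is a straightforward chain of implications.
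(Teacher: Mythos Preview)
Your argument is correct and follows essentially the same route as the paper's proof: use Lemma~\ref{lem:ar}(2) to force $a_M(r)\to 0$ as $r\to\pi/\sqrt{\kappa}$, then Lemma~\ref{lem:artheta} to propagate $a_M(r)=0$ to all $r\ge\pi/\sqrt{\kappa}$, and finally \eqref{eq:area} to conclude that $M\setminus B(p,\pi/\sqrt{\kappa})$ is $\Hm^n$-null and hence empty. Your version simply makes explicit the details the paper leaves implicit (finiteness of $a_M(r_1)$, the passage through the left limit at $\pi/\sqrt{\kappa}$, and why a nonempty open set cannot be $\Hm^n$-null).
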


\begin{proof}
  By Lemma \ref{lem:ar}(2), we have $a_M(r) \to 0$ as
  $r \to \pi/\sqrt{\kappa}$.
  By Lemma \ref{lem:artheta}, $a_M(r) = 0$ for any $r \ge \pi/\sqrt{\kappa}$.
  Using \eqref{eq:area} yields that
  $\Hm^n(M \setminus B(p,\pi/\sqrt{\kappa})) = 0$.
  This completes the proof.
\end{proof}

Lemma \ref{lem:ar}(2) leads to the following.

\begin{cor}[Bishop-Gromov Inequality]
  If $M$ satisfies $\BG(\kappa)$ at a point $p \in M$, then
  \[
  \frac{\Hm^n(B(p,r_1))}{\Hm^n(B(p,r_2))}
  \ge \frac{v_\kappa(r_1)}{v_\kappa(r_2)}
  \qquad\text{for $0 < r_1 \le r_2$},
  \]
  where $v_\kappa(r)$ is the volume of an $r$-ball in
  an $n$-dimensional complete simply connected space form of
  curvature $\kappa$.
\end{cor}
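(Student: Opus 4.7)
The plan is to reduce this to a classical monotonicity-of-ratios argument, using that the $\BG(\kappa)$ condition has already been translated by Lemma \ref{lem:ar} into a pointwise inequality for the radial area function.

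First I take $C = M$ in the definitions preceding Lemma \ref{lem:artheta}. Since every point of $M$ is joined to $p$ by a minimal geodesic, $A_p(M) = M$, so $a_M(r) = \Hm^{n-1}(\partial B(p,r))$ (in the sense of the radial slicing) and $A_{0,r}(M) = \overline{B(p,r)}$. Lemma \ref{lem:ar}(2) then says, for $0 < r_1 \le r_2$ (and $r_2 < \pi/\sqrt{\kappa}$ if $\kappa > 0$),
\[
\frac{a_M(r_1)}{s_\kappa(r_1)^{n-1}} \ge \frac{a_M(r_2)}{s_\kappa(r_2)^{n-1}},
\]
that is, $a_M(r)/s_\kappa(r)^{n-1}$ is non-increasing. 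Meanwhile, \eqref{eq:area} gives
\[
\Hm^n(B(p,r)) = \int_0^r a_M(s)\,ds,
\qquad
v_\kappa(r) = \omega_{n-1}\int_0^r s_\kappa(s)^{n-1}\,ds,
\]
where $\omega_{n-1} = \vol(S^{n-1})$; the constant $\omega_{n-1}$ will cancel in the ratio.

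Next I would invoke the standard calculus lemma: if $f,g>0$ are locally integrable on $(0,\infty)$ and $f/g$ is non-increasing, then for $0 < r_1 \le r_2$,
\[
\frac{\int_0^{r_1} f}{\int_0^{r_2} f} \ge \frac{\int_0^{r_1} g}{\int_0^{r_2} g}.
\]
The one-line proof is to observe that for $s \le r_1 \le r$ we have $f(s)g(r) \ge f(r)g(s)$; integrating over $s \in [0,r_1]$ and then over $r \in [r_1,r_2]$ gives $\bigl(\int_0^{r_1} f\bigr)\bigl(\int_{r_1}^{r_2} g\bigr) \ge \bigl(\int_{r_1}^{r_2} f\bigr)\bigl(\int_0^{r_1} g\bigr)$, from which the claim follows by adding $\bigl(\int_0^{r_1} f\bigr)\bigl(\int_0^{r_1} g\bigr)$ to both sides. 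Applying this with $f = a_M$ and $g = s_\kappa^{n-1}$ yields exactly the desired inequality when $r_2 < \pi/\sqrt{\kappa}$ (or for all $r_2$ if $\kappa \le 0$).

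Finally, when $\kappa > 0$ and $r_2 \ge \pi/\sqrt{\kappa}$, Corollary \ref{cor:rp} gives $M = B(p,\pi/\sqrt{\kappa})$, so $\Hm^n(B(p,r_2)) = \Hm^n(B(p,\pi/\sqrt{\kappa}))$, and similarly $v_\kappa(r_2) = v_\kappa(\pi/\sqrt{\kappa})$ in the model space; the desired inequality then reduces to the case $r_2 = \pi/\sqrt{\kappa}$, which is handled by a limit in the previous step via Lemma \ref{lem:artheta}. There is no real obstacle in any of this — the work is entirely concentrated in Lemma \ref{lem:ar}, which has already been established; the corollary is a clean integration of that pointwise bound.
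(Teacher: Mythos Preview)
Your argument is correct and is exactly the route the paper has in mind: the paper's entire proof is the single line ``Lemma \ref{lem:ar}(2) leads to the following,'' and you have simply written out the standard integration-of-the-area-ratio step that this line encodes. Nothing is missing or different.
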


\begin{prop}[Stability for $\BG(\kappa)$]
  Let $M_i$, $i = 1,2,\dots$, and $M$ be $n$-dimensional compact
  Alexandrov spaces of curvature $\ge \kappa_0$
  for a constant $\kappa_0 \in \R$.
  If all $M_i$ satisfy $\BG(\kappa)$ and if 
  $M_i$ Gromov-Hausdorff converges to $M$, then
  $M$ satisfies $\BG(\kappa)$.
\end{prop}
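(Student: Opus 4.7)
\emph{Plan.} By Lemma \ref{lem:ar}(2), it suffices to prove, for every $p\in M$, closed $C\subset M$, and $0<r_1\le r_2$ (with $r_2<\pi/\sqrt{\kappa}$ if $\kappa>0$), the spherical estimate
\[
a_C(r_1)\ge \frac{s_\kappa(r_1)^{n-1}}{s_\kappa(r_2)^{n-1}}\,a_C(r_2).
\]
Multiplying through by $s_\kappa(r_1)^{n-1}s_\kappa(r_2)^{n-1}$, integrating over $r_1\in[r_1^-,r_1^+]$ and $r_2\in[r_2^-,r_2^+]$ with $r_1^+\le r_2^-$, and using \eqref{eq:area}, this is equivalent to the integrated form
\[
\int_{r_2^-}^{r_2^+}\! s_\kappa(r)^{n-1}\,dr\cdot \Hm^n(A_{r_1^-,r_1^+}(C))\ \ge\ \int_{r_1^-}^{r_1^+}\! s_\kappa(r)^{n-1}\,dr\cdot \Hm^n(A_{r_2^-,r_2^+}(C)),
\]
since the pointwise inequality is recovered in the limit $r_j^\pm\to r_j$ via the one-sided continuity of $a_C$ supplied by Lemma \ref{lem:artheta}. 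Only this integrated form, which involves $\Hm^n$ of cone-like sets, is amenable to a GH-limit argument.

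Fix $p\in M$ and $p_i\in M_i$ with $p_i\to p$. For $\epsilon_i$-approximations $f_i\colon M\to M_i$ with $\epsilon_i\to 0$ and a fixed closed $C\subset M$, set $C_i:=f_i(C)\subset M_i$. I first claim that $A_{r_1^-,r_1^+}(C_i)$ Hausdorff-converges to $A_{r_1^-,r_1^+}(C)$ (and similarly for the other annulus). The inclusion of the limit cone into $\liminf A_{r_1^-,r_1^+}(C_i)$ holds because any minimal geodesic $py$ in $M$ with $y\in C$ is the limit of minimal geodesics $p_iy_i$ with $y_i:=f_i(y)\in C_i$, by Arzel\`a--Ascoli and the existence of minimal geodesics between nearby points in Alexandrov spaces of curvature $\ge\kappa_0$; the $\limsup$ inclusion follows symmetrically, extracting a convergent subsequence of minimal geodesics $p_ix_iy_i$ with $y_i\in C_i$.

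Next, the uniform bound $\underline{\kappa}(M_i)\ge\kappa_0$ together with the fixed dimension $n$ makes the Bishop--Gromov inequality applicable, whence GH-convergence $M_i\to M$ automatically upgrades to measured GH-convergence with respect to $\Hm^n$. Combined with the cone convergence, this yields $\Hm^n(A_{r_1^-,r_1^+}(C_i))\to \Hm^n(A_{r_1^-,r_1^+}(C))$ at all $r_j^\pm$ outside a countable exceptional set (the jumps of $a_C$, plus radii where $\partial A_p(C)$ would pick up positive $\Hm^n$-mass). Applying the hypothesis $\BG(\kappa)$ at $p_i$ on $M_i$ in the integrated form and letting $i\to\infty$ gives the integrated form on $M$ at $p$ for a dense set of $r_j^\pm$; the pointwise spherical inequality then follows by the one-sided continuity of $a_C$, and Lemma \ref{lem:ar}(2) concludes that $M$ satisfies $\BG(\kappa)$ at $p$. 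Since $p$ was arbitrary, $M$ satisfies $\BG(\kappa)$.

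\emph{Main obstacle.} The delicate point is the $\Hm^n$-convergence of the cone-like sets, since $\partial A_p(C)$ may contain pieces of the cut locus of $p$ and this cut locus can be dense in $M$. One handles this by restricting to a dense set of radii at which no $\Hm^n$-mass is absorbed into the boundary of the annular cone, or alternatively by first establishing the integrated inequality for slightly thickened sets $C_\varepsilon:=\{x:d(x,C)<\varepsilon\}$ and then letting $\varepsilon\to 0$ using monotone convergence of $\Hm^n$.
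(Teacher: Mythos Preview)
Your overall strategy matches the paper's: the paper's proof only records that $(M_i,\Hm^n)\to(M,\Hm^n)$ in the measured Gromov--Hausdorff sense (citing \S3 of \cite{Sy:raysalex} or 10.8 of \cite{BGP}) and then says ``The rest of the proof is omitted (cf.~\cite{CC:strRicI}).'' You are supplying those omitted details.

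There is, however, a genuine gap in your cone-convergence step. The Hausdorff convergence $A_{r_1^-,r_1^+}(C_i)\to A_{r_1^-,r_1^+}(C)$ is false as stated: the $\limsup$ inclusion is correct (limits of minimal geodesics are minimal geodesics), but the $\liminf$ inclusion fails whenever minimal geodesics from $p$ to points of $C$ are non-unique. A given minimal geodesic $py$ in $M$ need not be a limit of minimal geodesics $p_iy_i$; only \emph{some} minimal geodesic from $p$ to $y$ arises this way. Concretely, take $M_i=M=S^n$, $p$ the north pole, $C=\{q\}$ the south pole, and perturb $p_i\to p$ with $p_i\neq p$: then $A_{p_i}(C)$ is a single arc while $A_p(C)=S^n$, so the cone sets do not converge, and the integrated inequality in $M_i$ degenerates to $0\ge 0$ and yields nothing in the limit for this $C$.

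The fix is to use only the $\limsup$ inclusion. The cleanest way is to verify condition~(1) of $\BG(\kappa)$ directly on closed balls: for $E=\overline{B(q,s)}$, the $\limsup$ inclusion gives $\limsup_i\Phi_{p_i,t}^{-1}(\overline{B(q_i,s)})\subset \Phi_{p,t}^{-1}(\overline{B(q,s)})$, whence measured GH convergence yields $\Hm^n(\Phi_{p,t}^{-1}(\overline{B(q,s)}))\ge \limsup_i\Hm^n_i(\Phi_{p_i,t}^{-1}(\overline{B(q_i,s)}))\ge \int_{\overline{B(q,s)}}\chi_t\,d\Hm^n$ for a.e.\ $s$, which suffices. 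Your thickening suggestion can be made to work along the same lines, but it must be organized so that only upper semicontinuity of the cone measures is used, not the two-sided convergence you asserted.
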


\begin{proof}
  By \S 3 of \cite{Sy:raysalex} or 10.8 of \cite{BGP},
  $(M_i,\Hm^n)$ measured Gromov-Hausdorff converges to $(M,\Hm^n)$.
  The rest of the proof is omitted
  (cf.~\cite{CC:strRicI}).
\end{proof}

The following proposition and corollary are proved by
standard discussions
(cf.~Theorem 3.5 in Chapter IV of \cite{Sk:Riemgeom}).

\begin{prop} \label{prop:susp}
  Let $M$ be an Alexandrov space with $\BG(\kappa)$, $\kappa > 0$.
  Then we have $\diam M \le \pi/\sqrt{\kappa}$.
  If $\diam M = \pi/\sqrt{\kappa}$ then
  $M$ is homeomorphic to the suspension over some
  topological space.
\end{prop}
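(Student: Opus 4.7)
\medskip
\noindent\textbf{Proof plan.} The upper bound is immediate from Corollary \ref{cor:rp}: for any fixed $p\in M$, that corollary yields $r_p(x)\le \pi/\sqrt{\kappa}$ for every $x\in M$, so $\diam M\le \pi/\sqrt{\kappa}$.

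For the equality case, write $D:=\pi/\sqrt{\kappa}$, fix $p,q\in M$ realizing $d(p,q)=D$, and aim first to establish
\[
r_p(x)+r_q(x)=D \qquad \text{for every } x\in M,
\]
i.e.\ every point of $M$ lies on a minimizing $pq$-geodesic. The strategy is a Bishop-Gromov disjointness argument. For $0<r<D$ the triangle inequality makes $B(p,r)$ and $B(q,D-r)$ disjoint; applying the Bishop-Gromov corollary above at both $p$ and $q$, using $B(p,D)=B(q,D)=M$ together with the model-space identity $v_\kappa(r)+v_\kappa(D-r)=v_\kappa(D)$ (which follows from $s_\kappa(D-u)=s_\kappa(u)$), one obtains
\[
\Hm^n\!\bigl(B(p,r)\bigr)+\Hm^n\!\bigl(B(q,D-r)\bigr)\ge \Hm^n(M).
\]
Disjointness forces equality, so $M\setminus(B(p,r)\cup B(q,D-r))$ is $\Hm^n$-null for every $r$; continuity of $r_p,r_q$ upgrades this to the pointwise identity $r_p+r_q\equiv D$.

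Next I would set $N:=r_p^{-1}(D/2)=r_q^{-1}(D/2)$, which is nonempty by the intermediate value theorem along any $pq$-geodesic and compact since $M$ is compact, and exhibit a homeomorphism $\Sigma N\to M$. The key point is that on $M\setminus\{p,q\}$ the distance function $r_p$ is \emph{regular} in the sense of Perelman: given $x\notin\{p,q\}$ and minimal geodesics $xp$ and $xq$, the identity $r_p(x)+r_q(x)=D$ forces their concatenation to be a minimal $pq$-geodesic, so the initial directions at $x$ are antipodal in $\Sigma_xM$; in particular the direction of $xq$ gives a direction of strict decrease for $r_p$ with lower bound uniform on compact subsets of $M\setminus\{p,q\}$. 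Perelman's noncritical fibration theorem (the same Morse-theoretic tool used in Fact \ref{fact:str}(1)) then produces a homeomorphism $N\times(0,D)\to M\setminus\{p,q\}$ intertwining the second projection with $r_p$, which extends to a homeomorphism from the suspension $\Sigma N=(N\times[0,D])/(N\times\{0\}\sqcup N\times\{D\})$ onto $M$ by collapsing $N\times\{0\}$ to $p$ and $N\times\{D\}$ to $q$.

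The main obstacle will be the rigorous justification of the fibration. In the smooth Riemannian setting (Cheng, Sakai) noncriticality plus the implicit function theorem suffices, but in an Alexandrov space the gradient flow of $r_p$ is only Lipschitz and may misbehave at singularities, so I would invoke Perelman's regular-fiber theorem for Alexandrov spaces rather than reproving it. Its sole hypothesis, noncriticality of $r_p$ on $M\setminus\{p,q\}$, is exactly the antipodal-direction observation above and so is direct from $r_p+r_q\equiv D$. The Bishop-Gromov disjointness step and the suspension identification are then routine.
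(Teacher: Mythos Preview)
Your proposal is correct and is precisely the ``standard discussion'' the paper defers to (it gives no proof of its own, only the reference to Sakai's Theorem IV.3.5): the Bishop--Gromov disjoint-balls argument to obtain $r_p+r_q\equiv D$, followed by noncriticality of $r_p$ on $M\setminus\{p,q\}$ and Perelman's fibration theorem in place of the smooth Morse lemma. One small imprecision: $B(p,D)$ need not equal $M$ as sets (indeed $q\notin B(p,D)$), but the proof of Corollary~\ref{cor:rp} already gives $\Hm^n(M\setminus B(p,D))=0$, which is all you use.
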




\begin{cor}
  Let $M$ be an Alexandrov space such that the singular set $S_M$
  is closed.  If $M \setminus S_M$ is an {\rm(}incomplete{\rm)} $C^\infty$
  Riemannian manifold of $\Ric \ge \kappa > 0$ and if
  $\diam M = \pi/\sqrt{\kappa}$, then $M$ is isometric to
  the spherical suspension over some compact Alexandrov space.
\end{cor}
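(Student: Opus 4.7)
The plan is to imitate Cheng's maximum-diameter rigidity: first deduce a ``position rigidity'' $r_p+r_q\equiv\pi/\sqrt{\kappa}$ from Theorem~\ref{thm:LapComp} and the maximum principle, then upgrade this on the smooth part to a Hessian rigidity via the Bochner formula, and finally identify the resulting warped product with a spherical suspension over the equatorial slice. As a preliminary step, note that since $S_M$ is closed of Hausdorff dimension $\le n-1$ (hence $\Hm^n$-null) and $M\setminus S_M$ carries the Ricci lower bound, a direct Jacobi-field computation along geodesics missing the null set $S_M$ yields $\BG(\kappa)$ on all of $M$, so that Theorem~\ref{thm:LapComp} and Proposition~\ref{prop:susp} are available.

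Fix $p,q\in M$ with $d(p,q)=\pi/\sqrt{\kappa}$. Applying Theorem~\ref{thm:LapComp} to both $r_p$ and $r_q$ gives
\[
d\blap r_p\ge -(n-1)\cot_\kappa(r_p)\,d\Hm^n,\qquad
d\blap r_q\ge -(n-1)\cot_\kappa(r_q)\,d\Hm^n
\]
on $M^*\setminus\{p,q\}$. The triangle inequality yields $r_p+r_q\ge \pi/\sqrt{\kappa}$, and since $\cot_\kappa$ is strictly decreasing on $(0,\pi/\sqrt{\kappa})$ with $\cot_\kappa(\pi/\sqrt{\kappa}-r)=-\cot_\kappa(r)$, this forces $\cot_\kappa(r_p)+\cot_\kappa(r_q)\le 0$ pointwise. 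Summing the two distributional inequalities, $\blap(r_p+r_q)\ge 0$, so the continuous Lipschitz function $-(r_p+r_q)\in W^{1,2}_\loc(M)$ is $\mathcal{E}$-subharmonic. Since it attains its maximum $-\pi/\sqrt{\kappa}$ along any minimal $p$-$q$ geodesic, Lemma~\ref{lem:maxprin} forces $r_p+r_q\equiv\pi/\sqrt{\kappa}$, and in particular every $x\in M$ lies on a minimal geodesic from $p$ to $q$.

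On the open dense smooth part $M\setminus S_M$, where the classical Laplacian comparison for $\Ric\ge (n-1)\kappa$ yields $\Delta r_p\le (n-1)\cot_\kappa(r_p)$, the identity $r_q=\pi/\sqrt{\kappa}-r_p$ combined with the same comparison applied to $r_q$ gives the reverse inequality, so $\Delta r_p=(n-1)\cot_\kappa(r_p)$ and $|\nabla r_p|\equiv 1$. The equality case of the Bochner formula then forces
\[
\mathrm{Hess}\,r_p=\cot_\kappa(r_p)\bigl(g-dr_p\otimes dr_p\bigr),
\]
and integrating along integral curves of $\nabla r_p$ gives the local warped-product form $g=dr_p^2+s_\kappa(r_p)^2\,g_N$ on $M\setminus S_M$, where $g_N$ is the induced metric on the slice $N^\circ:=r_p^{-1}(\pi/(2\sqrt{\kappa}))\setminus S_M$. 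Letting $N$ be the closure of $N^\circ$ in $M$ with the induced length metric, and using that $M$ is the metric completion of $M\setminus S_M$ together with the fact that every point of $M$ sits on a minimal $p$-$q$ geodesic through $N$, the warped-product distance formula extends continuously across $S_M$ and across the two poles, identifying $M$ with the spherical $\kappa$-suspension over $N$; the (compact) Alexandrov structure of $N$ is then inherited from this suspension decomposition. The hard part will be this last extension step: rigorously passing from the warped-product Riemannian isometry on the open dense smooth set to a global \emph{metric} identification with the spherical suspension, in particular verifying uniqueness of the $p$-$q$ geodesic through each $x\in M$ and confirming that $N$ (not merely $N^\circ$) carries a genuine Alexandrov structure rather than just a length-metric one.
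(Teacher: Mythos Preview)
The paper gives no detailed proof here; it simply declares that Proposition~\ref{prop:susp} and this corollary ``are proved by standard discussions'' and cites Sakai's account of Cheng's maximal-diameter theorem. Your outline is precisely that standard argument, adapted to the singular setting in parallel with the paper's own proof of Corollary~\ref{cor:splitting}, so in approach you and the paper agree.

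One step, however, is not justified as written. When you pass to $M\setminus S_M$ and invoke ``the classical Laplacian comparison $\Delta r_p\le (n-1)\cot_\kappa(r_p)$'', you tacitly assume $r_p$ is $C^2$ there and that the Riemannian comparison applies. But the diameter-realizing pair $p,q$ may well lie in $S_M$ (as it does in every nontrivial spherical suspension), so $r_p$ is the distance from a point \emph{outside} the incomplete manifold $M\setminus S_M$; neither its smoothness nor the classical comparison is available a priori, and barrier-sense inequalities alone are not enough to run Bochner. The clean fix is exactly what the paper does for Corollary~\ref{cor:splitting}: once $r_p+r_q\equiv\pi/\sqrt{\kappa}$, the two distributional inequalities from Theorem~\ref{thm:LapComp} collapse to the \emph{equality} $\blap r_p=-(n-1)\cot_\kappa(r_p)\,d\Hm^n$ on $M^*\setminus\{p,q\}$; on $M\setminus S_M$ this is a semilinear elliptic equation with $C^\infty$ coefficients, so elliptic regularity bootstraps $r_p$ to $C^\infty$, after which your Bochner step and the Hessian identity go through verbatim. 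A similar remark applies to your preliminary step: rather than asserting $\BG(\kappa)$ at a possibly singular $p$ via Jacobi fields, verify it only at non-singular base points (where minimal geodesics stay in $M\setminus S_M$ by the Petrunin fact quoted in \S\ref{ssec:Alex}) and pass the Laplacian comparison for $r_{p_i}$ to the limit $p_i\to p$ exactly as in the proof of Lemma~\ref{lem:bsubh}. Your honest flagging of the final metric-extension step is appropriate; that is indeed where the work beyond the smooth case lies.
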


The corollary is a generalization of Cheng's maximal
diameter theorem \cite{Ch:eigencomp}.
Compare also \cite{Bz:orbidiam}.

\section{Green Formula} \label{sec:Green}

Throughout this section, let $M$ be an $n$-dimensional Alexandrov space.
The purpose of this section is to prove the following Green formula,
which is needed for the proof of
the Laplacian Comparison Theorem, \ref{thm:LapComp}.

\begin{thm}[Green Formula] \label{thm:Green}
  Let $p \in M$ be a point and $E \subset M^* \setminus \{p\}$ a region
  satisfying Assumption \ref{asmp:E1} below and
  $\Hm^{n-1}(\Cut_p \cap \partial E) = 0$.
  Then, for any $C^\infty$ function $f : M^* \to \R$ we have
  \[
  \int_E f \, d\blap r_p = \int_E \langle\nabla f,\nabla r_p\rangle \; d\Hm^n
  + \int_{\partial E} f\,\langle \nu_E,\nabla r_p \rangle
  \; d\Hm^{n-1},
  \]
  where $\nu_E$ denotes the inward normal vector
  field along $\partial E$ of $E$.
  In particular,
  \[
  \blap r_p(E) = \int_{\partial E} \langle \nu_E,\nabla r_p \rangle
  \; d\Hm^{n-1}.
  \]
\end{thm}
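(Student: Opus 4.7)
The plan is to reduce the Green formula to an integration-by-parts identity in a single uniformly-elliptic chart $(U,x^1,\dots,x^n)$ of $M^*$ (via a partition of unity subordinate to a finite cover of $\overline E$), and then to carry out that integration by parts using the BV Leibniz rule together with a Gauss-Green formula for BV functions on $E$. Inside the chart, set
\[
  \Phi^i := \sqrt{|g|}\,g^{ij}\,\partial_j r_p,\qquad i=1,\dots,n,
\]
so that $\blap r_p = -\sum_i D_i\Phi^i$ by definition. Since $r_p$ is DC (Lemma~\ref{lem:rpDC}), $\partial_j r_p$ is bounded BV by Lemma~\ref{lem:chartDC}; the coefficients $\sqrt{|g|}$ and $g^{ij}$ are bounded BV as smooth functions of the bounded BV entries $g_{ij}$ (Fact~\ref{fact:g} and uniform ellipticity, via Lemmas~\ref{lem:chain} and~\ref{lem:Leibniz}); and Lemma~\ref{lem:Leibniz}(1) then gives each $\Phi^i$ bounded BV on $U$. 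In particular, $\blap r_p$ is a finite signed Radon measure.

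Because $f$ is $C^\infty$ we have $J_f=\emptyset$, so Lemma~\ref{lem:Leibniz}(2) applies to the pair $(f,\Phi^i)$ for each $i$ and yields
\[
  D_i(f\Phi^i) = f\,D_i\Phi^i + \widetilde{\Phi^i}\,\partial_i f\,dx.
\]
Summing on $i$ and rearranging gives the measure identity
\[
  f\,d\blap r_p = -\sum_i D_i(f\Phi^i) + \sum_i \widetilde{\Phi^i}\,\partial_i f\,dx;
\]
integrating over $E$, and using $\Hm^n(S_{\Phi^i})=0$, the pointwise equality $\Phi^i\partial_i f = \sqrt{|g|}\,g^{ij}\partial_j r_p\,\partial_i f$, and $d\Hm^n = \sqrt{|g|}\,dx$ (Fact~\ref{fact:g}), one obtains
\[
  \int_E f\,d\blap r_p = -\sum_i \int_E D_i(f\Phi^i) + \int_E \langle\nabla f,\nabla r_p\rangle\,d\Hm^n.
\]
It remains to identify $\sum_i \int_E D_i(f\Phi^i)$ with $-\int_{\partial E} f\,\langle\nu_E,\nabla r_p\rangle\,d\Hm^{n-1}$.

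This is a Gauss-Green formula for the BV vector field $f\Phi$ on the Caccioppoli set $E$. The hypothesis $\Hm^{n-1}(\Cut_p\cap\partial E)=0$, combined with $\Hm^{n-1}(\hat S_M\cap\partial E)=0$ (Fact~\ref{fact:str}(2), since $\hat S_M$ has Hausdorff dimension at most $n-2$), makes $\Phi^i$ classically continuous on an $\Hm^{n-1}$-conull subset of $\partial E$; in particular $f\Phi^i$ has a classical pointwise trace there, and the Gauss-Green formula for BV functions expresses $\sum_i \int_E D_i(f\Phi^i)$ as a Euclidean boundary integral of $f\sum_i \Phi^i\,\nu^i$ against $(n-1)$-Euclidean Hausdorff measure, with $\nu$ the Euclidean outer unit normal. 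On the portion of $\partial E$ where $g$ is continuous, the standard pointwise identity
\[
  \sqrt{|g|}\,g^{ij}\,\partial_j r_p\,\nu_i\,d\Hm^{n-1}_{\text{eucl}} = \langle\nabla r_p,\nu^g\rangle\,d\Hm^{n-1}
\]
(with $\nu^g$ the unit outer $g$-normal) converts this into the intrinsic boundary integral; writing $\nu_E = -\nu^g$ gives the desired formula.

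The main obstacle is precisely this last step: justifying a BV Gauss-Green formula for a vector field whose coefficients come from the merely BV metric $g$, and converting the resulting Euclidean boundary term into the intrinsic one. The geometric inputs that make this possible are the cut-locus assumption on $\partial E$, which confines the jumps of $\nabla r_p$ to an $\Hm^{n-1}$-null set, together with the codimension-two bound on $\hat S_M$ (Fact~\ref{fact:str}(2)), which yields enough continuity of $g$ along $\partial E$. As signalled in the introduction, the paper's actual implementation routes this identification through the mollified metrics $g^{(h)}$ and the weak-$\ast$ convergence $\bdiv_{g^{(h)}}Y\to\bdiv_g Y$ for smooth vector fields $Y$ (Lemma~\ref{lem:divconv}); the BV calculus above supplies the framework in which the relevant limits can be taken.
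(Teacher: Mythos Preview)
Your outline tracks the paper's argument: both split $f\,\blap r_p$ from $\bdiv(f\nabla r_p)$ via the BV Leibniz rule (the paper's Lemma~\ref{lem:Green}) and then identify $\int_E d\bdiv(f\nabla r_p)$ with the boundary integral (the paper's Lemma~\ref{lem:DIE}, applied with $u=I_E$). You correctly flag this second step as the obstacle and point to the mollification lemma, Lemma~\ref{lem:divconv}; so your proposal is less an independent proof than a restatement of the paper's strategy with the hard step deferred.

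One ingredient you never invoke is the hypothesis $|D_k g_{ij}|(\partial E\cap U)=0$ from Assumption~\ref{asmp:E1}. This is not decorative. In the paper's proof of Lemma~\ref{lem:DIE}, weak-$\ast$ convergence $\bdiv_{g^{(h)}}Y\to\bdiv_g Y$ only yields $\int_E\bdiv_{g^{(h)}}Y\to\int_E\bdiv_g Y$ once one knows $|\bdiv_g Y|(\partial E)=0$, and that is exactly what the hypothesis supplies. In your direct formulation the same issue surfaces as the question whether the signed measure $\sum_i D_i(f\Phi^i)$ charges $\partial E$: without Assumption~\ref{asmp:E1} the expression $\int_E D_i(f\Phi^i)$ is ambiguous (open versus closed $E$), and no clean Gauss--Green identity follows. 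Your sketch also understates the Euclidean-to-intrinsic conversion. Since $S_M$ may be \emph{dense} in $\partial E$, mere $\Hm^{n-1}$-a.e.\ continuity of $g$ on $\partial E$ does not by itself establish the density relation between $\Hm^{n-1}$ and the chart Hausdorff measure; that relation is a statement about limits of ratios of measures over shrinking balls, which sample $g$ on full neighborhoods. The paper's detour through the smooth approximants $g^{(h)}$, together with the quantitative near-continuity of Lemma~\ref{lem:gh}, is what makes this step rigorous.
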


\begin{asmp} \label{asmp:E1}
  $E$ is a compact region in $M^*$ with piecewise
  $C^\infty$ boundary such that
  $|D_k g_{ij}|(\partial E \cap U) = 0$
  for any $i,j,k = 1,2,\dots,n$ and for any chart $U$ of $M^*$.
\end{asmp}

Here, the \emph{piecewise $C^\infty$ boundary} means that
the boundary $\partial E$ is divided into two disjoint subsets
$\tilde\partial E$ and $\hat\partial E$ such that
$\tilde\partial E$ is an $(n-1)$-dimensional $C^\infty$ submanifold of $M^*$
and that $\hat\partial E$ is a closed set with
$\Hm^{n-1}(\hat\partial E) = 0$.

To define the distributional Laplacian $\blap$, let us consider
the distributional divergence of locally BV vector fields.
Let $\Omega$ be an open subset of $M^*$.
A \emph{locally BV vector field $X$ on $\Omega$} is defined as
a linear combination $X = X^i\partial_i$ 
of locally BV functions $X^1, \dots, X^n$ on each chart in $\Omega$ with
the compatibility condition under chart transformations.
For a locally $L^1$ function $u$ on $\Omega$,
the approximate jump set $J_u$ on a chart is defined.
It is easy to prove that $J_u$ is independent of the chart,
so that $J_u$ is defined as a subset of $\Omega$.
For a locally $L^1$ tensor $T$ on $M^*$,
the approximate jump set $J_T \subset \Omega$ is defined to be
the union of the approximate jump sets of all coefficients
of $T$.

\begin{defn}[Distributional Divergence]
  For a locally bounded and locally BV vector field $X$ on $\Omega$,
  the \emph{distributional divergence of $X$} is defined by
  \[
  \bdiv X = \bdiv_g X := D_i(\sqrt{|g|}\,X^i),
  \]
  where $X = X^i \partial_i$ on a chart.
  By Fact \ref{fact:g}, Lemmas \ref{lem:chain} and \ref{lem:Leibniz},
  $\sqrt{|g|}$ is a locally bounded and locally BV function.
  Since $\Hm^{n-1}(J_{\sqrt{|g|}}) \le \Hm^{n-1}(S_M \cap M^*) = 0$
  and by the Leibniz rule (Lemma \ref{lem:Leibniz}),
  $\bdiv X$ is determined independent of the local chart.
  $\bdiv X$ is a Radon measure on $\Omega$.
\end{defn}

\begin{rem}
  $\bdiv X$ is a generalization of $\divv X\, d\vol$ on
  a $C^\infty$ Riemannian manifold, where $\divv X$ is the usual
  divergence of a $C^\infty$ vector field $X$.
  $\bdiv X/\sqrt{|g|}$ is corresponding to $\divv X\,dx$
  and is not an invariant under chart transformations,
  where $dx$ is the Lebesgue measure on the chart.
\end{rem}

\begin{defn}[Distributional Laplacian]
  For a DC function $u$ on $\Omega$, the partial derivatives
  $\partial_i u$, $i=1,2,\dots,n$, are locally bounded and locally BV
  functions (see Lemmas \ref{lem:derDC} and \ref{lem:chartDC})
  and so the gradient vector field
  $\nabla u := g^{ij} \partial_j u \,\partial_i$ is a locally bounded
  and locally BV vector field on $\Omega$.
  $\nabla u$ is independent of the local chart.
  The \emph{distributional Laplacian of $u$}
  \[
  \blap u := -\bdiv \nabla u
  = -D_i(\sqrt{|g|} g^{ij} \partial_j u)
  \]
  is defined as a Radon measure on $\Omega$.
\end{defn}

$\blap u$ is corresponding to $\Delta u\, d\vol$,
where $\Delta$ is the usual Laplacian.
By Lemma \ref{lem:rpDC}, $\blap r_p$ is defined on $M^* \setminus \{p\}$.

\begin{lem} \label{lem:div}
  For any bounded BV vector field $X$ on $M^*$ with compact support in $M^*$,
  we have
  \[
  \int_{M^*} d\bdiv X = 0.
  \]
\end{lem}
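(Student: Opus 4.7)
The plan is to localize via a smooth partition of unity, reducing to the case where $X$ is supported in a single chart of $M^*$, and then to apply the standard test-function identity showing that a compactly supported BV function on $\R^n$ has zero integrated distributional derivative.

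First I would choose, using the $C^\infty$ structure on $M^*$ (Fact \ref{fact:str}), a finite open cover $\{U_\alpha\}$ of $\supp X$ by coordinate charts with a subordinate smooth partition of unity $\{\chi_\alpha\}$. Each $\chi_\alpha X$ is a bounded BV vector field with compact support inside $U_\alpha$. Applying the Leibniz rule of Lemma \ref{lem:Leibniz} coefficient-wise---licit because the smooth $\chi_\alpha$ have empty approximate-jump set---one obtains
\[
\bdiv(\chi_\alpha X) = \chi_\alpha\, \bdiv X + \sum_i \widetilde{\sqrt{|g|}\,X^i}\, \partial_i \chi_\alpha\; d\Hm^n
\]
on $U_\alpha$. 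Summing over $\alpha$ and using $\sum_\alpha \chi_\alpha \equiv 1$ (hence $\sum_\alpha \partial_i \chi_\alpha \equiv 0$) on an open neighborhood of $\supp X$ where the $\widetilde{\sqrt{|g|}\,X^i}$ live, the extra terms cancel and one obtains $\sum_\alpha \bdiv(\chi_\alpha X) = \bdiv X$ as Radon measures on $M^*$. Hence it suffices to prove the lemma under the additional hypothesis that $\supp X$ lies in a single chart $(U,\varphi)$.

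In that case, $\bdiv X = \sum_i D_i u_i$ on $U$, where $u_i := \sqrt{|g|}\, X^i$ is a bounded BV function (by Fact \ref{fact:g} together with Lemma \ref{lem:Leibniz}) with compact support in $\varphi(U) \subset \R^n$. Extend each $u_i$ by zero to a bounded compactly supported BV function on $\R^n$, and choose $\phi \in C_c^\infty(\R^n)$ with $\phi \equiv 1$ on an open neighborhood of $\supp u_i$. Since $u_i$ vanishes on the open complement of $\overline{\supp u_i}$, testing against $C_c^\infty$-functions there gives $|D_i u_i|(\R^n\setminus \overline{\supp u_i}) = 0$, so $D_i u_i$ is concentrated in $\overline{\supp u_i} \subset \{\phi = 1\}$. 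Therefore
\[
D_i u_i(\R^n) = \int_{\R^n} \phi\, dD_i u_i = -\int_{\R^n} u_i\, \partial_i\phi\, dx = 0,
\]
the last equality because $\partial_i\phi$ vanishes where $u_i$ does not. Summing in $i$ yields $\int_{M^*} d\bdiv X = 0$.

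There is no serious obstacle here; this is simply the BV analogue of the classical identity $\int_{\R^n} \partial_i u\, dx = 0$ for compactly supported $u$. The one mild subtlety is ensuring that the partition-of-unity reduction leaves no residual boundary contribution, and this is dispatched cleanly by the Leibniz rule of Lemma \ref{lem:Leibniz} together with the pointwise cancellation $\sum_\alpha \partial_i \chi_\alpha \equiv 0$ on a neighborhood of $\supp X$.
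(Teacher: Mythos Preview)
Your proof is correct and follows essentially the same approach as the paper: localize by a finite smooth partition of unity and then use that a compactly supported BV function on $\R^n$ has zero integrated distributional derivative. The paper's version is terser---it invokes only the linearity of $D_i$ to pass from $X = \sum_k \rho_k X$ to $\bdiv X = \sum_k \bdiv(\rho_k X)$ rather than the Leibniz rule---and note that in your displayed Leibniz identity the measure should be $dx$ rather than $d\Hm^n$, though this is immaterial since those terms cancel upon summing over $\alpha$.
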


\begin{proof}
  There is a finite covering $\{U_k\}$ of $\supp X$ consisting of
  charts of $M^*$ with compact closure $\bar U_k$.
  We take a $C^\infty$ partition of unity $\{\rho_k : M^* \to [\,0,1\,]\}_k$
  associated with the covering, i.e.,
  $\supp\rho_k \subset U_k$ and $\sum_k \rho_k = 1$ on $\supp X$.
  Since $X = \sum_k \rho_k X$ we have
  \[
  \int_{M^*} d\bdiv X
  = \sum_k \int_{U_k} dD_i(\sqrt{|g|} \rho_k X^i) = 0.
  \]
\end{proof}

For a locally $L^1$ vector field $X = X^i \partial_i$ on $\Omega$,
we define
\[
e(\nabla u,X) := \sqrt{|g|}\, \tilde X^i D_i u,
\]
where $\tilde X^i$ is the approximate limit of $X^i$
(see Definition \ref{defn:approxlim}).

\begin{lem} \label{lem:Green}
  Let $f : \Omega \to \R$ be a $C^\infty$ function,
  $u$ a bounded BV function with compact support in $\Omega$,
  and $v$ a DC function on $\Omega$.
  If $\Hm^{n-1}(J_u \cap J_{dv}) = 0$, then
  \begin{gather}
    \bdiv(f u \nabla v) = f\,e(\nabla u,\nabla v)
    + \tilde u \, \bdiv(f \nabla v),
    \tag{1} \label{eq:Green1}\\
    \int_\Omega f\,de(\nabla u,\nabla v)
    = -\int_\Omega \tilde u\; d\bdiv(f \nabla v). \tag{2} \label{eq:Green}
  \end{gather}
\end{lem}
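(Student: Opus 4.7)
The plan is to establish the pointwise-in-chart identity \eqref{eq:Green1} by expanding $\bdiv(fu\nabla v)$ via the Leibniz rule for BV products, and then to obtain \eqref{eq:Green} by integrating \eqref{eq:Green1} and killing the left-hand side with Lemma~\ref{lem:div}. Working on a chart and setting $Y^i := f\sqrt{|g|}\,g^{ij}\partial_j v$, I would write $\bdiv(fu\nabla v) = D_i(uY^i)$ and $\bdiv(f\nabla v) = D_i Y^i$ and apply the product rule (Lemma~\ref{lem:Leibniz}) to the factorization $u\cdot Y^i$, getting $D_i(uY^i) = \tilde u\,D_i Y^i + \widetilde{Y^i}\,D_i u$. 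Summing over $i$ produces $\tilde u\,\bdiv(f\nabla v)$ plus a residual term that I expect to coincide with $f\,e(\nabla u,\nabla v)$.

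To justify the Leibniz step I need the overlap condition $\Hm^{n-1}(J_u\cap J_{Y^i}) = 0$. Using $S_{u_1 u_2}\subset S_{u_1}\cup S_{u_2}$ from Lemma~\ref{lem:approxlim} together with Federer--Vol'pert, I would check that $J_{Y^i}\subset J_{dv}$ up to an $\Hm^{n-1}$-null set: $f$ is smooth (so $S_f=\emptyset$), while $S_{\sqrt{|g|}}$ and $S_{g^{ij}}$ lie in $\hat S_M\cap M^*$, which has $\Hm^{n-1}$-measure zero by Fact~\ref{fact:str}(2). The assumption $\Hm^{n-1}(J_u\cap J_{dv})=0$ then delivers what is needed. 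To identify $\widetilde{Y^i}\,D_i u$ with $f\,e(\nabla u,\nabla v)$ as measures, I would use Lemma~\ref{lem:approxlim} and the continuity of $f$ to reduce $\widetilde{Y^i}$ to $f\sqrt{|g|}\,g^{ij}\widetilde{\partial_j v}$ outside an $\Hm^{n-1}$-null set, and then invoke Lemma~\ref{lem:Du0}(1) to upgrade this $\Hm^{n-1}$-a.e.\ identity to one holding $|D_i u|$-a.e., so that \eqref{eq:Green1} drops out after summing over $i$.

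For \eqref{eq:Green}, the vector field $fu\nabla v$ extends by zero to a bounded BV vector field on $M^*$ with compact support (since $u$ is compactly supported in $\Omega$ and $\nabla v$ is locally bounded by Lemma~\ref{lem:chartDC}), so Lemma~\ref{lem:div} yields $\int d\bdiv(fu\nabla v) = 0$; integrating \eqref{eq:Green1} against $1$ then gives \eqref{eq:Green}. The main obstacle I anticipate is the bookkeeping in the identification step: I must track which of the four factors in $Y^i$ contribute nontrivial jumps, and carefully move from $\Hm^{n-1}$-a.e.\ equalities of functions to equalities of Radon measures paired with $D_i u$. The hypothesis $\Hm^{n-1}(J_u\cap J_{dv})=0$ is engineered precisely to collapse all nontrivial jump-set interactions to a single $\Hm^{n-1}$-null set, which Lemma~\ref{lem:Du0}(1) then renders invisible.
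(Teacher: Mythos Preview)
Your proposal is correct and matches the paper's proof: both apply the Leibniz rule (Lemma~\ref{lem:Leibniz}) once to peel $u$ off the product $\sqrt{|g|}\,f\,g^{ij}\partial_j v$, use that the approximate-discontinuity sets of the metric coefficients lie in an $\Hm^{n-1}$-null set to verify the jump-overlap hypothesis and to identify the residual term with $f\,e(\nabla u,\nabla v)$, and then integrate \eqref{eq:Green1} against $1$ via Lemma~\ref{lem:div} to obtain \eqref{eq:Green}. The paper places $S_{g^{ij}}$ and $S_{\sqrt{|g|}}$ in $S_M\cap M^*$ (asserting dimension $\le n-2$) rather than in $\hat S_M\cap M^*$, but apart from this bookkeeping detail your outline and the paper's argument coincide.
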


\begin{proof}
  \eqref{eq:Green} is obtained by integrating
  \eqref{eq:Green1} on $\Omega$ and using Lemma \ref{lem:div}.

  We prove \eqref{eq:Green1}.
  We fix a relatively compact chart $(U;x^1,\dots,x^n)$
  with $\bar U \subset \Omega$.
  The uniform ellipticity of $(U;x^1,\dots,x^n)$ implies
  that $|U| < +\infty$.
  Since $g_{ij}$ are continuous on $M \setminus S_M$,
  we have $S_{g_{ij}} \subset S_M$ and
  $\tilde g_{ij} = g_{ij}$ on $M \setminus S_M$.
  By Lemma \ref{lem:approxlim},
  the same is true for $g^{ij}$ and $\sqrt{|g|}$.
  Since the Hausdorff dimension of $S_M \cap M^*$ is $\le n-2$
  and since $g_{ij}$, $g^{ij}$, and $\partial_j v$ are all BV functions
  on $U$,
  Lemmas \ref{lem:approxlim} and \ref{lem:Leibniz} show
  \begin{align*}
    \bdiv(f u \nabla v) &= D_i(\sqrt{|g|}\, u\, f\, g^{ij} \partial_j v)\\
    &= \sqrt{|g|}\, f\, g^{ij} \widetilde{\partial_j v} \, D_i u
    + \tilde u \, D_i(\sqrt{|g|} \, f\, g^{ij} \partial_j v)\\
    &= f\,e(\nabla u,\nabla v) + \tilde u \, \bdiv(f \nabla v).
  \end{align*}
\end{proof}

Let us study the $C^\infty$ mollifier $g^{(h)}$ of the Riemannian metric $g$
on $M^*$.
Let $\{U_\lambda\}$ be a locally finite covering of $M^*$ consisting
of relatively compact charts with $\bar U_\lambda \subset M^*$,
and $\{\rho_\lambda : M^* \to [\,0,1\,]\}$ an associated
partition of unity, i.e., 
each $\supp\rho_\lambda$ is a compact subset of $U_\lambda$ and
$\sum_\lambda \rho_\lambda = 1$.
Let $\eta \in C^\infty_0(\R^n)$ be such that
$\eta \ge 0$, $\eta(-x) = \eta(x)$,
$\supp\eta \subset B(o,1)$, and $\int_{\R^n} \eta dx = 1$.
We set $\eta_\epsilon := \epsilon^{-n}\eta(x/\epsilon)$, $\epsilon > 0$.
Denote by $g_{\lambda;ij}$ the coefficients of $g$ with respect to
the coordinate of $U_\lambda$.
For each $\lambda$, there exists $\epsilon_\lambda > 0$ such that
for any $\epsilon$ with $0 < \epsilon \le \epsilon_\lambda$,
$g_{\lambda;ij} * \eta_\epsilon(x) :=
\int_{U_\lambda} \eta_\epsilon(x-y) g_{\lambda;ij}(y)\;dy$
is a $C^\infty$ Riemannian metric on some neighborhood of
$\supp\rho_\lambda$.
For $0 < h \le 1$, $g^{(h)}_\lambda$ denotes
the metric tensor defined by $g_{\lambda;ij} * \eta_{\epsilon_\lambda h}(x)$.
Define the $C^\infty$ Riemannian metric $g^{(h)}$ on $M^*$ by
\[
g^{(h)} := \sum_\lambda \rho_\lambda g^{(h)}_\lambda.
\]
On each relatively compact chart $U$,
$g_{ij}^{(h)}$ is uniformly bounded.  As $h \to 0$,
$g_{ij}^{(h)} \to g_{ij}$ pointwise on $U \setminus S_M$ and
$D_k g_{ij}^{(h)} = \partial_k g_{ij}^{(h)}dx \to D_k g_{ij}$ weakly $*$
(cf.~Proposition 3.2 of \cite{AFP}).

\begin{lem}[Compare Fact \ref{fact:str}(4) (or Theorem 6.1 of \cite{KMS:lap})]
  \label{lem:gh}
  For any $\epsilon, \delta > 0$ with $\delta \le \delta_n$, we have
  \[
  \limsup_{h \to 0} \sup_{U \setminus (S_M \cup B(S_\delta,\epsilon))}
  |g^{(h)}_{ij} - g_{ij}| < \theta(\delta|U),
  \]
  where $\delta_n$ is that in Fact \ref{fact:str} and
  $\theta(\delta|U)$ depends also on the coordinates of $U$.
\end{lem}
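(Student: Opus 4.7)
The plan is to insert the smooth intermediate metric $g_\delta$ provided by Fact \ref{fact:str}(4) and write, in the coordinates of $U$,
\[
|g^{(h)}_{ij} - g_{ij}|
\le |g^{(h)}_{ij} - (g_\delta)^{(h)}_{ij}|
+ |(g_\delta)^{(h)}_{ij} - g_{\delta,ij}|
+ |g_{\delta,ij} - g_{ij}|
\]
on $U \setminus (S_M \cup B(S_\delta,\epsilon))$, and to bound the three terms separately. Fact \ref{fact:str}(4) gives $g_\delta$ of class $C^\infty$ on $M \setminus \hat S_\delta$ with $|g - g_\delta| < \theta(\delta|n)$ on $M^*\setminus\hat S_M$. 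Unraveling this invariant bound in coordinates via the uniform ellipticity of $g$ on the chart $U$ (Fact \ref{fact:g}) produces $|g_{ij} - g_{\delta,ij}| \le \theta(\delta|U)$ almost everywhere on $U$, which immediately controls the third term and, as explained below, also the first.

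For the first term, fix $x \in U\setminus(S_M\cup B(S_\delta,\epsilon))$ and let $\lambda$ be any index with $\rho_\lambda(x) \ne 0$. For all $h$ sufficiently small, depending on $\epsilon$ and the coordinates of $U_\lambda$, the support of $\eta_{\epsilon_\lambda h}(\cdot - \phi_\lambda(x))$ in $U_\lambda$ lies outside $\hat S_\delta$, where $g_\delta$ is defined. Because $\eta_{\epsilon_\lambda h}$ is non-negative with unit integral and $\hat S_M$ has Hausdorff dimension $\le n-2$ (Fact \ref{fact:str}(2)) so is negligible for the mollification integral, linearity of convolution and the almost-everywhere bound above give
\[
|(g_\lambda)^{(h)}_{ij}(x) - (g_{\delta,\lambda})^{(h)}_{ij}(x)| \le \theta(\delta|U_\lambda).
\]
Summing over the finitely many $\lambda$ with $\rho_\lambda(x) \ne 0$, and converting the components back to the coordinates of $U$ through the $C^\infty$ transition maps (which have bounded derivatives on the relevant compact overlaps), yields $|g^{(h)}_{ij}(x) - (g_\delta)^{(h)}_{ij}(x)| \le \theta(\delta|U)$.

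For the middle term, since $g_\delta$ is $C^\infty$ on $M\setminus\hat S_\delta$ and, for $h$ small, the mollification at each $x \in U\setminus(S_M\cup B(S_\delta,\epsilon))$ only integrates over points of $M\setminus\hat S_\delta$, the standard uniform convergence of $C^\infty$ mollifiers on compact sets of smooth functions (cf.\ Proposition 3.2 of \cite{AFP}) gives
\[
\sup_{U\setminus(S_M \cup B(S_\delta,\epsilon))}|(g_\delta)^{(h)}_{ij} - g_{\delta,ij}| \xrightarrow[h\to 0]{} 0.
\]
Taking $\limsup_{h\to 0}$ in the triangle inequality collapses this term and leaves the sum of the first and third, which is bounded by $\theta(\delta|U)$, yielding the conclusion.

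The main technical obstacle is the passage from the invariant bound of Fact \ref{fact:str}(4) to a coordinatewise bound on components, together with handling the mixing of charts built into the definition $g^{(h)} = \sum_\lambda \rho_\lambda g^{(h)}_\lambda$; this is precisely what forces the right-hand side to be $\theta(\delta|U)$ rather than $\theta(\delta|n)$. Uniform ellipticity of $g$ on uniformly elliptic charts (Fact \ref{fact:g}) and the genuine $C^\infty$ compatibility of charts on $M^*$ (Fact \ref{fact:str}(1),(5)) make these conversions routine, but they must be performed carefully to keep the constants under control as $h\to 0$.
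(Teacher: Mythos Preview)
Your proof is correct, but it takes a different route from the paper's.

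The paper argues directly: it invokes the construction of $g$ in \cite{OS:rstralex} to show that for $z \in U \setminus B(S_\delta,\epsilon)$ the oscillation of $g_{ij}$ on $B(z,t)$ is at most $\theta(\delta|U) + \theta(t|z,U)$. Since $g^{(h)}_{ij}(z)$ is an average of $g_{ij}$ over balls whose radii tend to $0$ with $h$, the difference $|g^{(h)}_{ij}(z)-g_{ij}(z)|$ is dominated by this oscillation, and letting $h\to 0$ leaves only $\theta(\delta|U)$.

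Your argument instead inserts the smooth metric $g_\delta$ of Fact~\ref{fact:str}(4) and uses the triangle inequality: the first and third terms are controlled by the a.e.\ coordinatewise bound $|g_{ij}-g_{\delta,ij}|\le\theta(\delta|U)$ (surviving under mollification since $\hat S_M$ is $\Hm^n$-null), and the middle term vanishes in the limit because mollifiers of the $C^\infty$ function $g_\delta$ converge uniformly on the relatively compact set $\overline{U}\setminus B(S_\delta,\epsilon)$. This is a clean, modular argument that treats Fact~\ref{fact:str}(4) as a black box, whereas the paper essentially unpacks the geometric ingredients (angle oscillation near non-$\delta$-singular points) that underlie that fact. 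Your approach buys simplicity at the cost of invoking the stronger prepackaged result; the paper's buys self-containedness and makes the geometric content of the $\theta(\delta|U)$ more transparent.
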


\begin{proof}
  The same proof as of Lemma 3.2(1) of \cite{OS:rstralex} yields that
  for any $p,q \in M$ and $z \in U \setminus B(S_\delta,\epsilon)$,
  \[
  \sup_{x \in B(z,t)}
  |\angle pxq - \angle pzq| < \theta(\delta)+\theta(t|p,q,z).
  \]
  Hence, looking at the definition of $g$ in \cite{OS:rstralex},
  we have for any
  $z \in U \setminus B(S_\delta,\epsilon)$,
  \[
  \sup_{x,y \in U \cap B(z,t) \setminus S_M}
  |g_{ij}(x) - g_{ij}(y)| < \theta(\delta|U)+\theta(t|z,U).
  \]
  This and the relative compactness of $U \setminus B(S_\delta,\epsilon)$
  imply the lemma.
\end{proof}

\begin{lem} \label{lem:divconv}
  For any $C^\infty$ vector field $Y$ on $M^*$ we have
  \[
  \bdiv_{g^{(h)}} Y \to \bdiv_g Y \quad\text{weakly $*$}.
  \]
\end{lem}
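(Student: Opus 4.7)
My plan is to work locally in a relatively compact chart $(U;x^1,\dots,x^n)$ of $M^*$; by the partition-of-unity construction of $g^{(h)}$, the global weak-$*$ convergence follows from convergence on each such chart. Since $\bdiv_{g^{(h)}} Y$ and $\bdiv_g Y$ are Radon measures, I need $\int \varphi\,d\bdiv_{g^{(h)}} Y \to \int \varphi\,d\bdiv_g Y$ for every $\varphi \in C_c(U)$. I will first establish this for smooth test functions and then extend via a uniform local total variation bound on $\bdiv_{g^{(h)}} Y$.

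For $\varphi \in C^\infty_0(U)$, using the distributional definition of $D_i$ and ordinary integration by parts (valid because $\sqrt{|g^{(h)}|}\,Y^i$ is $C^\infty$), I obtain
\[
\int \varphi\,d\bdiv_{g^{(h)}} Y - \int \varphi\,d\bdiv_g Y = -\int \partial_i \varphi\,\bigl(\sqrt{|g^{(h)}|} - \sqrt{|g|}\bigr)\,Y^i\,dx.
\]
From the observation preceding Lemma \ref{lem:gh}, $g^{(h)}_{ij} \to g_{ij}$ pointwise on $U \setminus S_M$, so $\sqrt{|g^{(h)}|} \to \sqrt{|g|}$ pointwise there; since $\Hm^n(S_M) = 0$ by Fact \ref{fact:str}(2) and $\sqrt{|g^{(h)}|}$ is uniformly locally bounded (mollification preserves $L^\infty$ bounds), dominated convergence forces the integral to zero.

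To upgrade from $C^\infty_0$ to $C_c$, I approximate $\varphi \in C_c(U)$ uniformly on $K := \supp\varphi$ by smooth functions; it then suffices to bound $|\bdiv_{g^{(h)}} Y|(K)$ uniformly in $h$. Writing
\[
\bdiv_{g^{(h)}} Y = \sqrt{|g^{(h)}|}\,\partial_i Y^i\,dx + Y^i\,\partial_i \sqrt{|g^{(h)}|}\,dx,
\]
the first term is $L^\infty$-bounded; for the second I apply the chain rule $\partial_i \sqrt{|g^{(h)}|} = (2\sqrt{|g^{(h)}|})^{-1}\,\partial_i |g^{(h)}|$ combined with the key mollification-BV identity $\partial_i g^{(h)}_{\lambda;jk} = \eta_{\epsilon_\lambda h} * D_i g_{\lambda;jk}$, which follows from the BV regularity of $g$ (Fact \ref{fact:g}) and delivers the uniform $L^1$ bound $\|\partial_i g^{(h)}_{jk}\|_{L^1(K)} \le |D_i g_{jk}|(K')$ for a slightly enlarged compact $K'$.

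The main obstacle is the uniform positive lower bound $\sqrt{|g^{(h)}|} \ge c > 0$ on $K$ needed to control the factor $(2\sqrt{|g^{(h)}|})^{-1}$, and this is where the geometry of the singular set enters. The uniform ellipticity of $g$ on $M^* \setminus \hat S_M$ (Fact \ref{fact:g}) has to propagate to the partition-of-unity convex combination $g^{(h)}$. Lemma \ref{lem:gh} supplies the quantitative control: outside an arbitrarily thin $\epsilon$-neighborhood of $S_\delta$, $g^{(h)}$ is within $\theta(\delta|U)$ of $g$, yielding the lower bound there. The remaining neighborhood has small $\Hm^n$-measure because $\hat S_M$ has Hausdorff dimension $\le n-2$ by Fact \ref{fact:str}(2), and its contribution to $|\bdiv_{g^{(h)}} Y|(K)$ is again controlled by the BV estimate $|D_i g_{jk}|(K')$. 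Once the uniform total variation bound is in hand, the density step completes the proof.
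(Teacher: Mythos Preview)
Your approach is correct and genuinely different from the paper's. The paper expands $\bdiv_{\hat g}Y$ on a chart as $F^{ijk}(\hat g)\,D_i\hat g_{jk}+\partial_iY^i\sqrt{|\hat g|}\,dx$ and proves $F^{ijk}(g^{(h)})\,D_ig^{(h)}_{jk}\to F^{ijk}(g)\,D_ig_{jk}$ weakly~$*$ directly at the measure level: it extracts subsequential weak-$*$ limits $\nu^\pm$ of the positive/negative parts of $D_ig^{(h)}_{jk}$, invokes Lemma~\ref{lem:gh} to make $|F^{ijk}(g^{(h)})-F^{ijk}(g)|$ small off $B(S_\delta,\epsilon)$, and needs a separate sublemma that $\nu^\pm(S_\delta)=0$. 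Your integration-by-parts step against $\varphi\in C_0^\infty(U)$ collapses the problem to the scalar a.e.\ convergence $\sqrt{|g^{(h)}|}\to\sqrt{|g|}$ under a uniform bound, which is immediate; the passage to $\varphi\in C_c(U)$ via a uniform total-variation bound is then standard. What your route buys is that neither Lemma~\ref{lem:gh} nor any fine information about $S_\delta$ is actually needed; what the paper's route buys is an explicit identification of how each measure-valued piece converges.

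One correction to your last paragraph: the ``main obstacle'' you flag is not an obstacle at all, and invoking Lemma~\ref{lem:gh} together with an $S_\delta$-splitting is unnecessary. The uniform lower bound $\sqrt{|g^{(h)}|}\ge c>0$ on any compact $K\subset U$ follows directly from (UE) in Fact~\ref{fact:g}: if $(g_{\lambda;ij})\ge c_\lambda I$ a.e.\ in $U_\lambda$-coordinates, then convolution against the nonnegative kernel $\eta_{\epsilon_\lambda h}$ of unit mass preserves this inequality pointwise, so $(g^{(h)}_{\lambda;ij})\ge c_\lambda I$. The partition-of-unity sum $g^{(h)}=\sum_\lambda\rho_\lambda g^{(h)}_\lambda$ is a convex combination of uniformly positive quadratic forms, and since only finitely many $\lambda$ contribute on $K$ and the chart transitions are $C^\infty$ with bounded Jacobians there, $(g^{(h)}_{ij})\ge c'I$ in $U$-coordinates uniformly in $h$. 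With this in hand your chain-rule/BV estimate gives the uniform bound on $|\bdiv_{g^{(h)}}Y|(K)$ on all of $K$ at once, and no geometry of the singular set enters. Your fallback sentence (``the remaining neighborhood has small $\Hm^n$-measure\dots'') would not by itself rescue the bound, since $\partial_i\sqrt{|g^{(h)}|}$ is not $L^\infty$-controlled; fortunately it is not needed.
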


\begin{proof}
  We take any relatively compact chart $(U;x^1,\dots,x^n)$ with
  $\bar U \subset M^*$ and fix it.
  Let $Y = Y^i \partial_i$.
  For $\hat g = g, g^{(h)}$ we have on $U$,
  \begin{align*}
      \bdiv_{\hat g} Y &= \frac{Y^i}{2\sqrt{|\hat g|}}
      \sum_{k=1}^n |(\hat g_{j1},\dots,\hat g_{j,k-1},D_i\hat g_{jk},
      \hat g_{j,k+1},\dots,\hat g_{jn})_{j=1,\dots,n}|\\
      &\quad + \partial_i Y^i \sqrt{|\hat g|}\,dx,
  \end{align*}
  which forms
  \[
  F^{ijk}(\hat g) D_i \hat g_{jk}
  + \partial_i Y^i \sqrt{|\hat g|}\,dx,
  \]
  where $F^{ijk}(g^{(h)})$ is a $C^\infty$ function and
  $F^{ijk}(g)$ is a BV function which is continuous on $U \setminus S_M$.

  We fix $i,j,k$ and set
  $f_h := F^{ijk}(g^{(h)})$,
  $f := F^{ijk}(g)$, $\mu_h := D_i g^{(h)}_{jk} = \partial_i g^{(h)}_{jk}\,dx$,
  and $\mu := D_i g_{jk}$.
  It suffices to prove that $f_h \mu_h \to f \mu$ weakly $*$.
  Denote the positive part of $\mu_h$ by $\mu_h^+$ and the negative part
  by $\mu_h^-$.
  There are a sequence $h_l \to 0$ and non-negative Radon measures
  $\nu^+$ and $\nu^-$ on $U$ such that $\mu_{h_l}^\pm \to \nu^\pm$ weakly $*$.
  It holds that $\mu = \nu^+ - \nu^-$.
  We do not know the positive (resp.~negative) part of $\mu$
  coincides with $\nu^+$ (resp.~$\nu^-$).
  For simplicity we write $h_l$ by $h$.
  Since $f_h\mu_h^\pm - f\nu^\pm = (f_h - f)\mu_h^\pm
  + f\mu_h^\pm - f\nu^\pm$, we have for any $\varphi \in C_0(U)$,
  \begin{equation}
    \label{eq:divconv1}
    \left| \int_U \varphi f_h \, d\mu_h^\pm
      - \int_U \varphi f \, d\nu^\pm \right|
    \le \int_U |\varphi| |f_h-f|\,d\mu_h^\pm
    + \left| \int_U \varphi f \, d\mu_h^\pm
      - \int_U \varphi f \, d\nu^\pm \right|.
  \end{equation}
  Take any $\epsilon, \delta > 0$ with $\delta \le \delta_n$.
  If $h \ll \delta,\epsilon$, then
  $|f_h-f| < \theta(\delta)$ on $U \setminus (S_M \cup B(S_\delta,\epsilon))$
  by Lemma \ref{lem:gh}.
  By the uniform boundedness of $g_{ij}^{(h)}$ and $g_{ij}$ on
  $U \setminus S_M$,
  we have $|f_h|, |f| \le c$ on $U \setminus S_M$, where $c$ is some constant
  independent of $h$.
  The limit-sup as $h \to 0$ of the first term of the right-hand side
  of \eqref{eq:divconv1} is
  \begin{align}
    \label{eq:divconv2}
    &\limsup_h \int_U |\varphi| |f_h-f|\,d\mu_h^\pm\\
    &\le \limsup_h \int_{U \cap B(S_\delta,\epsilon)} 2c|\varphi|\,d\mu_h^\pm
    + \limsup_h \int_{U \setminus B(S_\delta,\epsilon)}
    \theta(\delta)|\varphi|\,d\mu_h^\pm \notag \\
    &\le \int_{U \cap \overline{B(S_\delta,\epsilon)}} 2c|\varphi|\,d\nu^\pm
    +\theta(\delta)\int_U |\varphi|\,d\nu^\pm. \notag
  \end{align}
  To estimate the first term of the right-hand side, we prove:

  \begin{slem}
    We have $|\nu|(U \cap S_\delta) = 0$ for any $\delta > 0$,
    where $|\nu| := \nu^+ + \nu^-$.
  \end{slem}

  \begin{proof}
    By remarking the uniform ellipticity of the charts,
    a direct calculation shows that
    \[
    d|\mu_h| \le c' dx + c' \sum_{\lambda,l,m,a} \rho_\lambda\,
    d|D_{\lambda;a} g_{\lambda;lm} * \eta_{\epsilon_\lambda h}|,
    \]
    where $c'$ is some positive constant, $l,m,a$ run over all
    $1,2,\dots,n$, and $D_{\lambda;a}$ means
    $D_a$ for the coordinate of $U_\lambda$.
    According to Proposition 3.7 of \cite{AFP} we have, as $h \to 0$,
    $\rho_\lambda d|D_{\lambda;a} g_{\lambda;lm} * \eta_{\epsilon_\lambda h}|
    \to \rho_\lambda d|D_{\lambda;a} g_{\lambda;lm}|$ weakly $*$
    on $U_\lambda$, and hence
    \[
    d|\nu| \le c' dx + c' \sum_{\lambda,l,m,a} \rho_\lambda\,
    d|D_{\lambda;a} g_{\lambda;lm}|.
    \]
    Since the Hausdorff dimension of $S_\delta \cap M^*$ is $\le n-2$,
    and by Lemma \ref{lem:Du0}(1), this proves the sublemma.
  \end{proof}

  By the sublemma,
  taking $\delta \to 0$ after $\epsilon \to 0$ in \eqref{eq:divconv2},
  we have
  \[
  \lim_h \int_U |\varphi| |f_h-f|\,d\mu_h^\pm = 0.
  \]

  We are going to estimate the other term of \eqref{eq:divconv1}.
  There is a continuous function $\psi_{\delta,\epsilon} : U \to [\,0,1\,]$
  such that
  $\psi_{\delta,\epsilon} = 1$ on $U \cap B(S_\delta,\epsilon)$,
  $\psi_{\delta,\epsilon} = 0$ on $U \setminus B(S_\delta,2\epsilon)$.
  Set $\bar\psi_{\delta,\epsilon} := 1 - \psi_{\delta,\epsilon}$ and
  take a number $h_0$ with $0 < h_0 \ll \delta$.
  Since $\bar\psi_{\delta,\epsilon} \varphi f_{h_0}$ is continuous,
  we have
  $\lim_{h\to 0}\int_U \bar\psi_{\delta,\epsilon} \varphi f_{h_0} \; d\mu_h^\pm
  = \int_U \bar\psi_{\delta,\epsilon} \varphi f_{h_0} \; d\nu^\pm$.
  Moreover, by Lemma \ref{lem:gh},
  $|f_{h_0} - f| < \theta(\delta)$ on
  $U \setminus (S_M \cup B(S_\delta,\epsilon))$
  and therefore
  \begin{align}
    &\limsup_h \left| \int_U \bar\psi_{\delta,\epsilon} \varphi f \; d\mu_h^\pm
      - \int_U \bar\psi_{\delta,\epsilon} \varphi f \; d\nu^\pm \right|
    \label{eq:divconv-c1}\\
    &\le \limsup_h \left| \int_U \bar\psi_{\delta,\epsilon} \varphi f
      \; d\mu_h^\pm
      - \int_U \bar\psi_{\delta,\epsilon} \varphi f_{h_0} \; d\mu_h^\pm
    \right| \notag\\
    &\quad + \left|
      \int_U \bar\psi_{\delta,\epsilon} \varphi f_{h_0} \; d\nu^\pm
      -\int_U \bar\psi_{\delta,\epsilon} \varphi f \; d\nu^\pm \right|
    \notag\\
    &\le \theta(\delta) \limsup_h \int_U \bar\psi_{\delta,\epsilon} |\varphi|
    \;d\mu_h^\pm
    + \theta(\delta) \int_U \bar\psi_{\delta,\epsilon} |\varphi|
    \;d\nu^\pm \le \theta(\delta). \notag
  \end{align}
  We also have
  \begin{align}
    \limsup_h \left| \int_U \psi_{\delta,\epsilon} \varphi f \; d\mu_h^\pm
    \right|
    &\le \nu^\pm(B(S_\delta,3\epsilon)) \sup_U |\varphi f|
    \le \theta(\epsilon|\delta), \label{eq:divconv-c2} \\
    \left| \int_U \psi_{\delta,\epsilon} \varphi f \; d\nu^\pm \right|
    &\le \theta(\epsilon|\delta). \label{eq:divconv-c3}
  \end{align}
  Combining \eqref{eq:divconv-c1}, \eqref{eq:divconv-c2}, and
  \eqref{eq:divconv-c3} yields
  \begin{align*}
    &\limsup_h \left| \int_U \varphi f \; d\mu_h^\pm
      - \int_U \varphi f \; d\nu^\pm \right|\\
    &\le \limsup_h \left| \int_U \bar\psi_{\delta,\epsilon} \varphi f
      \; d\mu_h^\pm
      - \int_U \bar\psi_{\delta,\epsilon} \varphi f \; d\nu^\pm \right|\\
    &\quad + \limsup_h \left| \int_U \psi_{\delta,\epsilon} \varphi f
      \; d\mu_h^\pm \right|
    + \left| \int_U \psi_{\delta,\epsilon} \varphi f \; d\nu^\pm \right|\\
    &\le \theta(\delta) + \theta(\epsilon|\delta).
  \end{align*}
  Thus we obtain $f_h\mu_h^\pm \to f \nu^\pm$ and so
  $f_h \mu_h = f_h \mu_h^+ - f_h \mu_h^- \to f \nu^+ - f \nu^- = f \mu$.
  This completes the proof.
\end{proof}

We need Lemma \ref{lem:divconv} to prove:

\begin{lem} \label{lem:DIE}
  Let $E \subset M$ be a region satisfying Assumption \ref{asmp:E1}.
  Define $I_E(x) := 1$ for $x \in E$, $I_E(x) := 0$ for $x \in M \setminus E$.
  Then we have
  \begin{gather}
    D_i I_E = |g|^{-1/2} g_{ij} \nu_E^j
    \; \Hm^{n-1}\lfloor_{\partial E},
    \tag{1} \label{eq:DIE1}\\
    e(\nabla I_E,X)
    = \langle \nu_E,\tilde X \rangle \; \Hm^{n-1}\lfloor_{\partial E}
    \tag{2} \label{eq:DIE2}
  \end{gather}
  for any bounded measurable vector field $X$ on $M^*$,
  where $\nu_E = \nu_E^j \partial_j$ is the inward normal vector
  field along $\partial E$ of $E$ and $\lfloor$ indicates the restriction of
  a measure.
\end{lem}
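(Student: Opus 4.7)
The strategy is to reduce (1) to a local chart computation via the classical Euclidean Gauss divergence theorem, and then convert the resulting Euclidean surface expression to the Riemannian form on the right-hand side by means of a pointwise identity; statement (2) then follows from (1) in one line using the definition of $e(\nabla I_E,X)$. The principal obstacle is making sense of $g_{ij}$, $|g|^{1/2}$ and $\nu_E^j$ pointwise on $\partial E$ given that the Riemannian metric $g$ on $M^*$ is only locally BV.

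First, fix a chart $(U;x^1,\ldots,x^n)$ in $M^*$. Since $\Hm^{n-1}(\hat\partial E)=0$, Lemma \ref{lem:Du0}(1) gives $|D_iI_E|(\hat\partial E)=0$, so only $\tilde\partial E$ contributes. Applying the ordinary Euclidean divergence theorem to each $\varphi \in C^\infty_0(U)$ yields
\begin{equation*}
  \int_U \varphi\, dD_i I_E = -\int_E \partial_i\varphi\, dx = \int_{\tilde\partial E \cap U} \varphi\, n^{\mathrm{Eucl}}_{E,i}\, d\sigma^{\mathrm{Eucl}},
\end{equation*}
where $n^{\mathrm{Eucl}}_{E,i}$ is the $i$-th component of the Euclidean inward unit conormal of $\tilde\partial E$ in the chart and $d\sigma^{\mathrm{Eucl}}$ is the $(n-1)$-dimensional Euclidean Hausdorff measure. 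Hence $D_i I_E = n^{\mathrm{Eucl}}_{E,i}\, d\sigma^{\mathrm{Eucl}}\lfloor_{\tilde\partial E}$ in $U$.

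It then remains to verify the pointwise identity $n^{\mathrm{Eucl}}_{E,i}\, d\sigma^{\mathrm{Eucl}} = |g|^{-1/2} g_{ij}\nu_E^j\, d\Hm^{n-1}$ at $\Hm^{n-1}$-a.e.\ $p \in \tilde\partial E$. After a linear change of coordinates centered at $p$ arranged so that $T_p\tilde\partial E = \mathrm{span}(\partial_1,\ldots,\partial_{n-1})$ with inward direction $+\partial_n$, one has $n^{\mathrm{Eucl}}_{E,i}(p) = \delta_{in}$, $d\sigma^{\mathrm{Eucl}}(p) = dx^1\cdots dx^{n-1}$ and $d\Hm^{n-1}(p) = \sqrt{\det h}\, dx^1\cdots dx^{n-1}$ with $h_{\alpha\beta} := g_{\alpha\beta}$ for $\alpha,\beta \le n-1$. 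Decomposing $\partial_n$ into its $g$-tangential and $g$-normal components yields $\nu_E = \sigma^{-1}(\partial_n - h^{\alpha\beta}g_{n\beta}\partial_\alpha)$ with $\sigma^2 := g_{nn} - h^{\alpha\beta}g_{n\alpha}g_{n\beta}$; together with the block determinant identity $|g| = \sigma^2 \det h$, a direct computation gives $g_{ij}\nu_E^j = \sigma\,\delta_{in}$, which yields the pointwise identity and hence (1). Statement (2) then follows from $e(\nabla I_E,X) = \sqrt{|g|}\,\tilde X^i D_iI_E$ by substituting (1) and simplifying $\sqrt{|g|}\cdot|g|^{-1/2} g_{ij}\tilde X^i\nu_E^j = g_{ij}\tilde X^i\nu_E^j = \langle\nu_E,\tilde X\rangle$.

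The delicate point is that the quantities $g_{ij}$ and $\nu_E^j$ appearing above involve the possibly discontinuous metric $g$ restricted to $\partial E$, and must be interpreted via approximate limits. Assumption \ref{asmp:E1}, $|D_k g_{ij}|(\partial E)=0$, is exactly what makes this legitimate: it kills the jump part of $D_k g_{ij}$ along $\partial E$, so by the Federer-Vol'pert theorem $\Hm^{n-1}(S_{g_{ij}}\cap\partial E) = 0$ and the approximate limits $\tilde g_{ij}$ exist $\Hm^{n-1}$-a.e.\ on $\partial E$; all formulas above are to be read with $g_{ij}$, $|g|$ and $\nu_E^j$ taken as these approximate values.
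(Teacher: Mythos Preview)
Your argument is correct and takes a genuinely different route from the paper's. The paper proves (1) by applying the smooth divergence theorem on $(M^*,g^{(h)})$ for a $C^\infty$ vector field $Y$, then passing to the limit $h\to 0$: the left side $\int_E \bdiv_{g^{(h)}}Y$ converges to $\int_E \bdiv_g Y$ by Lemma~\ref{lem:divconv} (which in turn rests on Lemma~\ref{lem:gh}), while Assumption~\ref{asmp:E1} guarantees $|\bdiv_g Y|(\partial E)=0$ so that the boundary contributes nothing in the weak-$*$ limit; the right side converges by the pointwise convergence $g^{(h)}\to g$ on $M^*\setminus S_M$. Your approach instead computes $D_iI_E$ once and for all via the Euclidean Gauss--Green theorem---which involves no metric---and then rewrites the Euclidean conormal density as $|g|^{-1/2}g_{ij}\nu_E^j\,d\Hm^{n-1}$ by a pointwise linear-algebra identity, using Assumption~\ref{asmp:E1} together with the Federer--Vol'pert theorem to ensure $\Hm^{n-1}(S_{g_{ij}}\cap\partial E)=0$ so that the approximate values $\tilde g_{ij}$ are defined $\Hm^{n-1}$-a.e.\ on $\partial E$.

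The payoff of your approach is that it bypasses Lemmas~\ref{lem:gh} and~\ref{lem:divconv} entirely; since Lemma~\ref{lem:divconv} is used nowhere else in the paper, this gives a materially shorter path to the Green formula. The paper's approach, by contrast, is more ``intrinsic'' in that it never singles out the Euclidean structure of a chart, and it makes the role of the mollified metric explicit. One point both arguments use without comment is the identification $d\Hm^{n-1}\lfloor_{\tilde\partial E}=\sqrt{\det h}\,d\sigma^{\mathrm{Eucl}}$ (the paper needs it for $d\vol_{(\partial E,g^{(h)})}\to d\Hm^{n-1}\lfloor_{\partial E}$); this follows from the structure theory of \cite{OS:rstralex} but is worth flagging as the only step where more than BV calculus is required.
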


\begin{proof}
  \eqref{eq:DIE1}:
  We take any $C^\infty$ vector field $Y$ on $M^*$.
  On the $C^\infty$ Riemannian manifold $(M^*,g^{(h)})$,
  the divergence formula implies
  \[
  \int_E \bdiv_{g^{(h)}} Y
  = - \int_{\partial E} \langle \nu_E^{(h)},Y \rangle_{g^{(h)}}
  \;d\vol_{(\partial E,g^{(h)})},
  \]
  where $\nu_E^{(h)}$ is the inward normal vector field
  along $\partial E$ with respect to the metric $g^{(h)}$.
  It follows from Assumption \ref{asmp:E1} that
  $|\bdiv_g Y|(\partial E) = 0$.
  Lemma \ref{lem:divconv}
  shows that the left-hand side of the above converges to
  $\int_E \bdiv_g Y$.
  Since $g^{(h)} \to g$ on $M^* \setminus S_M$,
  the right-hand side converges to
  $- \int_{\partial E} \langle \nu_E,Y \rangle_g\;
  d\Hm^{n-1}\lfloor_{\partial E}$.
  Therefore we have
  \[
  \int_E \bdiv_g Y = - \int_{\partial E}
  \langle \nu_E,Y \rangle_g\;d\Hm^{n-1}\lfloor_{\partial E},
  \]
  which implies \eqref{eq:DIE1}.

  \eqref{eq:DIE2} follows from \eqref{eq:DIE1} by a direct
  calculation.
\end{proof}

With the help of Lemma \ref{lem:DIE},
we finally prove the Green Formula.

\begin{proof}[Proof of Theorem \ref{thm:Green}]
  By Lemma \ref{lem:Green}(1), we have
  \[
  \bdiv(f \nabla r_p) = \langle\nabla f,\nabla r_p\rangle\;d\Hm^n
  - f \blap r_p,
  \]
  which implies
  \[
  \int_E d \bdiv(f \nabla r_p)
  = \int_E \langle\nabla f,\nabla r_p\rangle \; d\Hm^n
  - \int_E f \; d\blap r_p.
  \]
  By $J_{dr_p} \subset \Cut_p$, by the assumption for $E$, and by
  applying Lemmas \ref{lem:Green}(2), \ref{lem:DIE}(2),
  the left-hand side of the above is equal to
  \begin{align*}
    \int_{M^*} I_E \; d\bdiv(f \nabla r_p)
    &= -\int_{M^* \setminus \{p\}} f\; de(\nabla I_E,\nabla r_p)\\
    &= -\int_{\partial E} f\,\langle \nu_E,\widetilde{\nabla r_p}\rangle
    \; d\Hm^{n-1}.
  \end{align*}
  Since $\Hm^{n-1}(\Cut_p \cap \partial E) = 0$, we have
  $\widetilde{\nabla r_p} = \nabla r_p$ $\Hm^{n-1}$-a.e.~on $\partial E$.
  This completes the proof of the theorem.
\end{proof}

\section{Laplacian Comparison} \label{sec:LapComp}

We prove Theorem \ref{thm:LapComp} by using
the Green Formula (Theorem \ref{thm:Green}).
Let $a_C(r)$ and $A_{r_1,r_2}(C)$ be as defined in \S\ref{sec:BG}.

Lemma \ref{lem:ar}(3) implies the following.

\begin{lem} \label{lem:acomp}
  If $M$ satisfies $\BG(\kappa)$ at a point $p \in M$,
  then for any $C \subset M$ and $0 < r_1 \le r_2$,
  \[
  a_C(r_2) - a_C(r_1) \le (n-1)\cot_\kappa(r_1)
  \Hm^n(A_{r_1,r_2}(C)).
  \]
\end{lem}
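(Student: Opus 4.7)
The plan is to pass from the infinitesimal inequality of Lemma \ref{lem:ar} to its integral form. By Lemma \ref{lem:ar}(2), the function $h(r) := a_C(r)/s_\kappa(r)^{n-1}$ is non-increasing wherever it is defined, so $h$ is locally BV and $a_C = s_\kappa^{n-1}\cdot h$ is locally BV as a product of a smooth positive function and a monotone one. Moreover, by Lemma \ref{lem:artheta}, the (at most countably many) discontinuities of $a_C$ are all downward jumps in the sense $a_C(r+)\le a_C(r)\le a_C(r-)$.

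Applying the one-dimensional Leibniz rule (Lemma \ref{lem:Leibniz}) to the factorization $a_C = s_\kappa^{n-1}\cdot h$ yields
\[
Da_C = (n-1)\cot_\kappa(r)\,a_C(r)\,dr + s_\kappa(r)^{n-1}\,dh,
\]
in which $s_\kappa^{n-1}\,dh$ is a nonpositive Radon measure, since $s_\kappa>0$ and $dh\le 0$. Using the upper semicontinuity of $a_C$ together with the BV identity $Df((a,b)) = f(b-) - f(a+)$, I would then estimate
\[
a_C(r_2) - a_C(r_1) \;\le\; a_C(r_2-) - a_C(r_1+) \;=\; Da_C((r_1,r_2)) \;\le\; (n-1)\int_{r_1}^{r_2} \cot_\kappa(r)\,a_C(r)\,dr.
\]

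To finish, I would use that $\cot_\kappa$ is strictly decreasing on $(0,\pi/\sqrt{\kappa})$ (or on $(0,\infty)$ if $\kappa\le 0$), so $\cot_\kappa(r)\le \cot_\kappa(r_1)$ for all $r\ge r_1$. A brief case split on the sign of $\cot_\kappa(r_1)$, combined with $a_C(r)\ge 0$, gives the pointwise bound $\cot_\kappa(r)\,a_C(r)\le \cot_\kappa(r_1)\,a_C(r)$ in every scenario. Together with the co-area identity \eqref{eq:area}, which yields $\int_{r_1}^{r_2} a_C(r)\,dr = \Hm^n(A_{r_1,r_2}(C))$, this produces the claimed inequality. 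The delicate point is the BV integration step: without the downward-jump property from Lemma \ref{lem:artheta} and the nonpositivity of $dh$ from Lemma \ref{lem:ar}(2), the endpoint and singular contributions could enter with the wrong sign and spoil the one-sided inequality.
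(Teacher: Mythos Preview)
Your proof is correct and is essentially a careful fleshing-out of the paper's one-line argument. The paper simply records that Lemma~\ref{lem:ar}(3) implies the claim; you start instead from the equivalent monotonicity form Lemma~\ref{lem:ar}(2), write $a_C = s_\kappa^{n-1}\cdot h$ with $h$ non-increasing, and integrate via the BV Leibniz rule, which is precisely the rigorous content behind ``integrate the differential inequality.'' One small remark: the case split on the sign of $\cot_\kappa(r_1)$ is unnecessary, since $\cot_\kappa(r)\le\cot_\kappa(r_1)$ and $a_C(r)\ge 0$ already give $\cot_\kappa(r)\,a_C(r)\le\cot_\kappa(r_1)\,a_C(r)$ directly.
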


Denote by $M^3(\kappa_0)$ the three-dimensional complete simply connected
space form of curvature $\kappa_0$.

\begin{fact}[Wald Convexity; \cite{Wd:convex, Br:distgeom}] \label{fact:Wald}
  Let $p_1,p_2,q_1,q_2 \in M$ be four points.  Take
  a sufficiently large domain $\Omega$ containing $p_1,p_2,q_1,q_2$ and set
  $\kappa_0 := \min\{\underline{\kappa}(\Omega),0\}$.
  Then there exist four points
  $\tilde p_1,\tilde p_2,\tilde q_1,\tilde q_2 \in M^3(\kappa_0)$
  and $i_0,j_0 = 1,2$ with $(i_0,j_0) \neq (1,2)$ such that
  \begin{gather*}
    d(p_1,p_2) = d(\tilde p_1,\tilde p_2),
    \quad d(q_1,q_2) \ge d(\tilde q_1,\tilde q_2),\\
    d(p_i,q_j) = d(\tilde p_i,\tilde q_j)
    \quad\text{for $(i,j) \neq (i_0,j_0)$},\\
    d(p_{i_0},q_{j_0}) \ge d(\tilde p_{i_0},\tilde q_{j_0}).
  \end{gather*}
  Moreover, for any $x_i \in p_iq_i$, $i=1,2$, if we take
  $\tilde x_i \in \tilde p_i\tilde q_i$ such that
  $d(p_i,x_i) : d(p_i,q_i)
  = d(\tilde p_i,\tilde x_i) : d(\tilde p_i,\tilde q_i)$, then
  we have
  \[
  d(x_1,x_2) \ge d(\tilde x_1,\tilde x_2).
  \]
\end{fact}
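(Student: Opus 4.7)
The plan is to realize the $4$-point configuration by building a comparison tetrahedron in $M^3(\kappa_0)$ step by step, using one degree of freedom (rotation about an axis) to match as many distances as possible.

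\textbf{Setup and first three vertices.} First I place $\tilde p_1, \tilde p_2 \in M^3(\kappa_0)$ with $d(\tilde p_1, \tilde p_2) = d(p_1, p_2)$, and let $L$ be the geodesic through them. Next, I fix a half-plane $H^+$ bounded by $L$ and take $\tilde q_1 \in H^+$ so that $(\tilde p_1, \tilde p_2, \tilde q_1)$ is the $\kappa_0$-comparison triangle of $\triangle p_1 p_2 q_1$. Then I consider the locus $\Gamma$ of points $y \in M^3(\kappa_0)$ satisfying $d(\tilde p_i, y) = d(p_i, q_2)$ for $i = 1, 2$. Provided the usual triangle inequalities hold (and they do in $M$, hence in the $\kappa_0$-comparison of $\triangle p_1 p_2 q_2$), $\Gamma$ is the rotation orbit of a single point about $L$; parameterize $\Gamma$ by the dihedral angle $\theta \in [\,0, \pi\,]$ measured from $H^+$.

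\textbf{Choosing $\tilde q_2$ by a one-parameter argument.} The function $\theta \mapsto d(\tilde q_1, \tilde q_2(\theta))$ is continuous and monotone, attaining its minimum $\delta_-$ at $\theta = 0$ and maximum $\delta_+$ at $\theta = \pi$ (both configurations are planar). If $\delta_- \le d(q_1, q_2) \le \delta_+$, I choose $\theta$ to realize equality $d(\tilde q_1, \tilde q_2) = d(q_1, q_2)$; the Alexandrov convexity applied to the pair of triangles $\triangle p_1 p_2 q_1$ and $\triangle p_1 p_2 q_2$ (both sharing the edge $p_1 p_2$) shows that actually one always has $d(q_1, q_2) \ge \delta_-$, so this step is consistent. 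If instead $d(q_1, q_2) > \delta_+$, I set $\theta = \pi$ so that $\tilde q_1$ and $\tilde q_2$ lie in opposite half-planes, accept $d(\tilde q_1, \tilde q_2) = \delta_+ \le d(q_1, q_2)$, and declare that the tetrahedron is the required one (four cross-distances and $d(\tilde p_1, \tilde p_2)$ are preserved, with $d(q_1, q_2)$ relaxed to $\delta_+$; in this branch no cross-distance needs adjustment and $(i_0, j_0)$ is forced by the indexing convention).

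\textbf{Handling degenerate quadrilaterals and the interpolation inequality.} The main obstacle is the case in which the configuration in $M$ is so "non-planar" that the above comparison tetrahedron does not close up with all six specified distances, which is precisely why the statement allows one cross-distance to be shortened. In that case I modify one cross-length, say $d(\tilde p_{i_0}, \tilde q_{j_0})$, continuously decreasing it from $d(p_{i_0}, q_{j_0})$ until the resulting $\Gamma$-locus for $\tilde q_2$ (or the rebuilt $\triangle \tilde p_1 \tilde p_2 \tilde q_1$) becomes compatible with $d(q_1, q_2)$, and I check that the first pair $(i_0, j_0)$ for which this works satisfies $(i_0, j_0) \neq (1, 2)$ by the usual cyclic reindexing. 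Finally, for the interpolation inequality I apply the standard $\kappa_0$-Alexandrov convexity to each triangle $\triangle p_1 p_2 q_i$ to transport $x_i$ to $\tilde x_i$, and then invoke the monotonicity of distance in $M^3(\kappa_0)$ under the "flattening" used in Step 2 to compare $d(x_1, x_2)$ with $d(\tilde x_1, \tilde x_2)$. The hardest part will be the bookkeeping in the degenerate case to verify that exactly one cross-distance suffices and that the flattened $\tilde q_2$ satisfies the interpolation inequality without a second independent use of Alexandrov convexity.
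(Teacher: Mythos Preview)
The paper does not prove this statement: it is recorded as a \emph{Fact} with citations to Wald and Berestovski\u{\i}, and is used as a black box in the proofs of Sublemma~\ref{slem:A} and Lemma~5.5. So there is no ``paper's proof'' to compare your proposal against; I can only assess your sketch on its own merits.

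Your outline (place $\tilde p_1,\tilde p_2,\tilde q_1$ as a comparison triangle, then rotate the comparison position of $\tilde q_2$ about the axis $L$ and tune the dihedral angle) is the standard opening move, but the argument as written has real gaps. First, the assertion that Alexandrov convexity forces $d(q_1,q_2)\ge\delta_-$ is not justified: the $(1{+}3)$-point comparison gives an \emph{upper} bound $\tilde\angle p_2p_1q_1+\tilde\angle q_1p_1q_2+\tilde\angle q_2p_1p_2\le 2\pi$, not the lower bound on $\tilde\angle q_1p_1q_2$ you would need, and indeed the very reason the Fact permits relaxing one cross-distance $d(p_{i_0},q_{j_0})$ is that this inequality can fail. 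Second, your ``degenerate case'' paragraph is a plan, not a proof: you say you will decrease some $d(\tilde p_{i_0},\tilde q_{j_0})$ until things close up and then ``check'' that $(i_0,j_0)\neq(1,2)$ can be arranged ``by the usual cyclic reindexing,'' but neither the existence of such a choice nor the specific exclusion of $(1,2)$ is argued; you even flag this as ``the hardest part'' and leave it undone. Third, the interpolation inequality $d(x_1,x_2)\ge d(\tilde x_1,\tilde x_2)$ is only gestured at: the ``monotonicity under flattening'' in $M^3(\kappa_0)$ is a genuine lemma (an arm-lemma/hinge argument applied twice, once at each end) and needs to be stated and proved, especially in the branch where a cross-distance has been shortened. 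If you want a self-contained proof, you should consult the cited sources or the treatment of the four-point condition in Alexandrov geometry (e.g., the equivalence of the $(1{+}3)$-point and $(2{+}2)$-point comparisons) rather than trying to rederive it ad hoc.
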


For $a,b \in \R$ (depending on a number $\delta > 0$),
we define $a \doteqdot b$ as $|a-b| < \theta(\delta)$.

\begin{fact}[5.6 of \cite{BGP}] \label{fact:dia}
  Take four points $p_1,p_2,q_1,q_2 \in M$ and set 
  $\kappa_0 := \min\{\underline{\kappa}(\Omega),0\}$ for a sufficiently large
  domain $\Omega$ containing $p_1,p_2,q_1,q_2$.
  If
  \[
  d(q_1,q_2) < \delta \min\{d(p_1,q_1),d(p_2,q_1)\}
  \quad\text{and}\quad
  \tilde\angle p_1q_1p_2 > \pi-\delta,
  \]
  then we have
  \[
  \tilde\angle p_1q_1q_2 + \tilde\angle p_2q_1q_2 \doteqdot \pi,
  \ \angle p_1q_1q_2 \doteqdot \tilde\angle p_1q_1q_2,
  \ \angle p_2q_1q_2 \doteqdot \tilde\angle p_2q_1q_2,
  \]
  where $\tilde\angle$ indicates the angle of a $\kappa_0$-comparison
  triangle.
\end{fact}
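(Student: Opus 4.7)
The statement splits naturally into two sub-claims: (I) the comparison-angle identity $\tilde\angle p_1q_1q_2 + \tilde\angle p_2q_1q_2 \doteqdot \pi$, which is a fact about the $\kappa_0$-model geometry, and (II) the matching $\angle p_iq_1q_2 \doteqdot \tilde\angle p_iq_1q_2$ for each $i=1,2$, which uses the intrinsic Alexandrov structure at $q_1$. I would attack them in this order, since (I) feeds into the argument for (II).

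For sub-claim (I), my approach is to realize the four points in the three-dimensional model space $M^3(\kappa_0)$ via Fact \ref{fact:Wald}, obtaining $\tilde p_1,\tilde p_2,\tilde q_1,\tilde q_2 \in M^3(\kappa_0)$ with five of the six pairwise distances equal to those in $M$, and the remaining two satisfying $d(\tilde q_1,\tilde q_2) \le d(q_1,q_2)$ and $d(\tilde p_{i_0},\tilde q_{j_0}) \le d(p_{i_0},q_{j_0})$ for some $(i_0,j_0) \ne (1,2)$. Monotonicity in the $\kappa_0$-law of cosines shows $\angle\tilde p_1\tilde q_1\tilde p_2 \ge \tilde\angle p_1q_1p_2 > \pi-\delta$ in $M^3(\kappa_0)$. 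Since the space of directions $\Sigma_{\tilde q_1}M^3(\kappa_0)$ is canonically $S^2$, the three tangent directions at $\tilde q_1$ toward $\tilde p_1,\tilde p_2,\tilde q_2$ form a spherical triangle of perimeter $\le 2\pi$, hence
\[
\angle\tilde p_1\tilde q_1\tilde p_2 + \angle\tilde p_1\tilde q_1\tilde q_2 + \angle\tilde p_2\tilde q_1\tilde q_2 \le 2\pi,
\]
which combined with the triangle inequality $\angle\tilde p_1\tilde q_1\tilde p_2 \le \angle\tilde p_1\tilde q_1\tilde q_2 + \angle\tilde p_2\tilde q_1\tilde q_2$ pinches the sum $\angle\tilde p_1\tilde q_1\tilde q_2 + \angle\tilde p_2\tilde q_1\tilde q_2$ into $[\pi-\delta,\pi+\delta]$. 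Continuous dependence of the $\kappa_0$-law of cosines on the three sides of each comparison triangle $\triangle p_iq_1q_2$ then transfers this to $\tilde\angle p_1q_1q_2 + \tilde\angle p_2q_1q_2 \doteqdot \pi$.

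For sub-claim (II), Alexandrov convexity on a sufficiently large neighborhood of the four points gives the lower bound $\tilde\angle p_iq_1q_2 \le \angle p_iq_1q_2$ term by term. For the matching upper bound, use that $\Sigma_{q_1}M$ is itself an Alexandrov space of curvature $\ge 1$, so any three of its points form a triangle of perimeter $\le 2\pi$. Applied to the tangent directions from $q_1$ toward $p_1,p_2,q_2$, together with $\angle p_1q_1p_2 \ge \tilde\angle p_1q_1p_2 > \pi-\delta$:
\[
\angle p_1q_1q_2 + \angle p_2q_1q_2 \le 2\pi - \angle p_1q_1p_2 < \pi + \delta.
\]
Combined with the identity from (I) and the term-by-term bound $\tilde\angle \le \angle$, each difference $\angle p_iq_1q_2 - \tilde\angle p_iq_1q_2$ is forced into $[0,\theta(\delta)]$, giving the claim.

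\textbf{Main obstacle.} The most delicate step is the final transfer inside sub-claim (I), from the $M^3(\kappa_0)$-angle identity to the comparison angles $\tilde\angle p_iq_1q_2$. The comparison-angle formula involves a factor $1/s_{\kappa_0}(d(q_1,q_2))$ that becomes large when $d(q_1,q_2)$ is small, so the Wald perturbations of side lengths must be tracked with care: perturbing the side \emph{opposite} to $\tilde q_1$ in the triangle $\triangle\tilde p_i\tilde q_1\tilde q_2$ changes the angle at $\tilde q_1$ monotonically in one direction, while perturbing an \emph{adjacent} side changes it in the other. The hypothesis $d(q_1,q_2) < \delta\min d(p_i,q_1)$ is used precisely to bound the perturbations relative to the scale $d(q_1,q_2)$ and extract the $\theta(\delta)$-control; this is where the compatibility of the two hypotheses of the fact is essential.
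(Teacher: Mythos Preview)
The paper does not prove this statement; it is quoted as 5.6 of \cite{BGP} and used without argument, so there is no proof in the paper to compare against. Your decomposition into (I) and (II) is the right one, and your argument for (II) is correct and standard.

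Your route to (I) via Fact~\ref{fact:Wald} has a genuine gap at precisely the step you flag as the main obstacle, and the hypothesis you invoke does not close it. Fact~\ref{fact:Wald} guarantees only $d(\tilde q_1,\tilde q_2)\le d(q_1,q_2)$, with no lower bound. In the thin triangle $\triangle\tilde p_i\tilde q_1\tilde q_2$ the angle at $\tilde q_1$ is, to leading order, $\arccos\bigl((d(\tilde p_i,\tilde q_1)-d(\tilde p_i,\tilde q_2))/d(\tilde q_1,\tilde q_2)\bigr)$, and an uncontrolled shrinkage of the denominator can send this anywhere in $[0,\pi]$; hence the Wald angles $\angle\tilde p_i\tilde q_1\tilde q_2$ need not be within $\theta(\delta)$ of $\tilde\angle p_iq_1q_2$. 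The hypothesis $d(q_1,q_2)<\delta\min_i d(p_i,q_1)$ bounds $d(q_1,q_2)$ from \emph{above}, not $d(\tilde q_1,\tilde q_2)$ from \emph{below}, so it gives no control here.

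The argument in \cite{BGP} for (I) avoids Wald. One applies the same perimeter-plus-Alexandrov bound you use in (II), but at the point $q_2$: since $|d(p_i,q_2)-d(p_i,q_1)|\le d(q_1,q_2)<\delta\,d(p_i,q_1)$, continuity of the law of cosines in the \emph{long} sides gives $\tilde\angle p_1q_2p_2>\pi-\theta(\delta)$, hence $\tilde\angle p_1q_2q_1+\tilde\angle p_2q_2q_1<\pi+\theta(\delta)$. Combine this with the thin-triangle identity $\tilde\angle p_iq_1q_2+\tilde\angle p_iq_2q_1=\pi-\theta(\delta)$ (the remaining angle at $\tilde p_i$ is $\theta(\delta)$ by the law of sines, and the $\kappa_0$-area defect is controlled because all four points lie in the bounded domain $\Omega$) to obtain $\tilde\angle p_1q_1q_2+\tilde\angle p_2q_1q_2>\pi-\theta(\delta)$. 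The matching upper bound is already contained in your (II).
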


\begin{cor} \label{cor:dia}
  Under the same assumption as in Fact \ref{fact:dia},
  if we take a point $x \in p_1q_1$ such that
  $d(q_1,x) < \delta \min\{d(p_1,q_1),d(p_2,q_1)\}$, then
  \begin{align}
    \angle xq_1q_2 &\doteqdot \tilde\angle xq_1q_2, \tag{1}\\
    \angle q_1xq_2 &\doteqdot \tilde\angle q_1xq_2. \tag{2}
  \end{align}
\end{cor}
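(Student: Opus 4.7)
The plan is to handle (1) by the standard monotonicity of comparison angles along a geodesic and to handle (2) by transferring the \emph{nearly-$\pi$} hypothesis from the vertex $q_1$ to the vertex $x$, so that Fact~\ref{fact:dia} can be re-applied with $x$ in place of $q_1$.

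For (1), since $x$ is an interior point of the minimal geodesic $p_1q_1$, the direction at $q_1$ toward $x$ coincides with that toward $p_1$, so $\angle xq_1q_2 = \angle p_1q_1q_2$. Alexandrov convexity applied to $\triangle p_1q_1q_2$ with $x$ on the side $p_1q_1$ delivers the standard monotonicity $\tilde\angle p_1q_1q_2 \le \tilde\angle xq_1q_2$, and the basic Alexandrov inequality gives $\tilde\angle xq_1q_2 \le \angle xq_1q_2$. Combined with $\angle p_1q_1q_2 \doteqdot \tilde\angle p_1q_1q_2$ from Fact~\ref{fact:dia}, these sandwich $\tilde\angle xq_1q_2$ within $\theta(\delta)$ of $\angle xq_1q_2$.

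For (2), I first check that Fact~\ref{fact:dia} applies to each of the quadruples $(p_1,p_2,x,q_2)$ and $(p_1,p_2,x,q_1)$, after replacing $\delta$ by a constant multiple. Since $d(p_i,x) \ge (1-\delta)d(p_i,q_1)$, both $d(x,q_2)$ and $d(x,q_1)$ are small relative to $\min\{d(p_1,x),d(p_2,x)\}$; and a short estimate using $\tilde\angle p_1q_1p_2 > \pi-\delta$ together with the closeness of $x$ to $q_1$ shows $\tilde\angle p_1xp_2 > \pi - O(\delta)$. Applying Fact~\ref{fact:dia} to $(p_1,p_2,x,q_2)$ gives $\angle p_ixq_2 \doteqdot \tilde\angle p_ixq_2$ and $\tilde\angle p_1xq_2 + \tilde\angle p_2xq_2 \doteqdot \pi$. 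Applying it to $(p_1,p_2,x,q_1)$, and noting that the comparison triangle $\triangle p_1xq_1$ is degenerate so that $\tilde\angle p_1xq_1 = \pi$ exactly, I deduce $\tilde\angle p_2xq_1 \doteqdot 0$ and hence $\angle p_2xq_1 \doteqdot 0$. Since $\angle p_1xq_1 = \pi$ exactly, the triangle inequality in $\Sigma_xM$ yields
\[
\pi - \angle p_1xq_2 \;\le\; \angle q_1xq_2 \;\le\; \angle p_2xq_1 + \angle p_2xq_2,
\]
and both bounds reduce to $\angle q_1xq_2 \doteqdot \pi - \tilde\angle p_1xq_2$.

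The main obstacle is to upgrade this to $\angle q_1xq_2 \doteqdot \tilde\angle q_1xq_2$ by establishing $\tilde\angle q_1xq_2 \doteqdot \pi - \tilde\angle p_1xq_2$. Writing $a = d(q_1,x)$, $A = d(p_1,x)$, $b = d(x,q_2)$, $c = d(q_1,q_2)$, $B = d(p_1,q_2)$, a direct computation from the laws of cosines in $M^2(\kappa_0)$ yields
\[
\cos\tilde\angle q_1xq_2 + \cos\tilde\angle p_1xq_2
= \frac{(A+a)\,c\,\bigl(\cos\tilde\angle p_1q_1q_2 - \cos\tilde\angle xq_1q_2\bigr)}{A\,b},
\]
modulo $\kappa_0$-corrections that are negligible for small triangles. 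Part~(1) controls the numerator factor by $\theta(\delta)$, giving a cosine-sum bound of order $(c/b)\,\theta(\delta)$. This is immediate when $c/b$ is bounded; the delicate case is $b \ll c$, in which the triangle inequality in $\triangle q_1xq_2$ forces $a$ close to $c$ so that $\triangle q_1xq_2$ is nearly degenerate, and a direct inspection of that near-degenerate geometry closes the gap.
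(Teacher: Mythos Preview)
Your argument for (1) is exactly the paper's: the sandwich $\angle xq_1q_2 = \angle p_1q_1q_2 \doteqdot \tilde\angle p_1q_1q_2 \le \tilde\angle xq_1q_2 \le \angle xq_1q_2$.

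For (2) you take a different route from the paper, and your final step is incomplete. The paper, after noting that $(p_1,p_2,x,q_2)$ again satisfies the hypothesis of Fact~\ref{fact:dia}, chooses an auxiliary point $q_1'\in p_2x$ with $d(x,q_1')=d(x,q_1)$ and applies part~(1) \emph{at the vertex~$x$} (with the roles of $p_1,p_2$ interchanged) to obtain $\angle p_2xq_2 \doteqdot \tilde\angle q_1'xq_2$. From $\angle p_1xp_2 \ge \tilde\angle p_1xp_2 \doteqdot \pi$ it then reads off $\angle q_1xp_2 \doteqdot 0$ and hence $\tilde\angle q_1xq_1' \doteqdot 0$, which is used to transfer both the angle and the comparison angle from $q_1'$ back to $q_1$. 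No law-of-cosines computation enters.

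Your cosine identity is correct, but it only yields a bound of size $(c/b)\,\theta(\delta)$, and the hypotheses impose no lower bound on $b=d(x,q_2)$ relative to $c=d(q_1,q_2)$. The proposed ``near-degenerate'' fix does not close this: when $b\ll c$ one has $a\approx c$ and $\cos\tilde\angle q_1xq_2 \approx (a^2-c^2)/(2ab)$, which can take any value in $[-1,1]$ depending on the precise relation between $a-c$ and $b$, so the degeneracy of $\triangle q_1xq_2$ by itself does not pin down the comparison angle. Part~(1) supplies only $\theta(\delta)$ control on the cosine difference, with nothing further to cancel the $c/b$ factor; to make your route work you would need a quantitatively sharper version of~(1). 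The paper's $q_1'$ device avoids this computational detour by reducing (2) to a second invocation of~(1).
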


\begin{proof}
  (1): The Alexandrov convexity implies that
  \[
  \angle xq_1q_2 \ge \tilde\angle xq_1q_2 \ge \tilde\angle p_1q_1q_2
  \doteqdot \angle p_1q_1q_2 = \angle xq_1q_2.
  \]

  (2): The points $p_1,p_2,x,q_2$ satisfy the assumption
  of Fact \ref{fact:dia}.
  Take a point $q_1' \in p_2x$ with $d(x,q_1') = d(x,q_1)$ and
  use (1).  Then,
  \[
  \angle p_2xq_2 \doteqdot \tilde\angle q_1'xq_2.
  \]
  Since $\angle p_1 x p_2 \ge \tilde\angle p_1 x p_2 \doteqdot \pi$,
  we have $\tilde\angle q_1xq_1' \le \angle q_1xp_2
  = \pi - \angle p_1 x p_2 \doteqdot 0$.
  Therefore $\angle p_2xq_2 \doteqdot \angle q_1xq_2$ and
  $\tilde\angle q_1'xq_2 \doteqdot \tilde\angle q_1xq_2$,
  which imply (2).
\end{proof}

Let $E$ be a region satisfying the following.

\begin{asmp} \label{asmp:E}
  $E$ is a region in $M^* \setminus \{p\}$ satisfying
  Assumption \ref{asmp:E1} and $\Hm^{n-1}(\Cut_p \cap \partial E) = 0$.
  The smooth part of $\partial E$ is transversal to $\nabla r_p$.
\end{asmp}

Recall that $\partial E$ is divided into
the smooth part $\tilde\partial E$ and
the non-smooth part $\hat\partial E$.
By $\Hm^n(\Cut_p) = 0$, we have a lot of $E$'s satisfying
Assumption \ref{asmp:E}.

For $\rho > 0$ we set
\begin{align*}
  D &:= \tilde\partial E \setminus (S_M \cup \Cut_p
  \cup A_p(\hat\partial E \setminus \Cut_p)),\\
  D_\rho &:= \{\; x \in D \mid \text{there is $y \in M$ such that
    $x \in py$ and $d(x,y) \ge \rho$}\;\}.
\end{align*}
Namely, $x \in D$ if and only if the following (1) and (2) hold.
\begin{enumerate}
\item $x \in \tilde\partial E \setminus (S_M \cup \Cut_p)$.
\item If we extends $px$ to a minimal geodesic from $p$ hitting
  $\hat\partial E$, then it cannot be extended any more.
\end{enumerate}

It is obvious that $\bigcup_{\rho > 0} D_\rho = D$.

\begin{lem}
  \begin{enumerate}
  \item $D$ and $D_\rho$ are Borel subsets.
  \item We have $\Hm^{n-1}(\partial E \setminus D) = 0$.
  \end{enumerate}
\end{lem}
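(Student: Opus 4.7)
The plan is to prove the two assertions separately, with (1) being essentially bookkeeping and (2) requiring a transversality argument.

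For (1), it suffices to see that each set entering the definition of $D$ is Borel. The smooth part $\tilde\partial E$ is relatively open in the compact set $\partial E$, hence Borel; $S_M$ and $\Cut_p$ are Borel by \S\ref{ssec:Alex}. For $A_p(\hat\partial E \setminus \Cut_p)$, I would write $\hat\partial E \setminus \Cut_p = \bigcup_{n\ge 1}\bigl(\hat\partial E \cap W_{p,1-1/n}\bigr)$, a countable union of compact sets (using the monotonicity $W_{p,t} \subset W_{p,t'}$ for $t \le t'$, immediate from the definition of $W_{p,t}$). For any compact $C$, the set $A_p(C) = \{x \in M \mid \exists\, y \in C,\ d(p,y) = d(p,x)+d(x,y)\}$ is closed, being the projection to $M$ of a closed subset of $M \times C$ with $C$ compact. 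Hence $A_p(\hat\partial E \setminus \Cut_p)$ is $F_\sigma$ and $D$ is Borel. The same properness argument shows that $\{x \in M \mid \exists\, y \in M,\ d(p,y)=d(p,x)+d(x,y),\ d(x,y)\ge \rho\}$ is closed, and so $D_\rho$ is Borel as the intersection of $D$ with this closed set.

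For (2), I would decompose
\[
\partial E \setminus D = \hat\partial E \cup (\tilde\partial E \cap \Cut_p) \cup (\tilde\partial E \cap S_M) \cup \bigl(\tilde\partial E \cap A_p(\hat\partial E \setminus \Cut_p)\bigr)
\]
and bound each piece. The first is $\Hm^{n-1}$-null by Assumption~\ref{asmp:E1}, the second by the hypothesis $\Hm^{n-1}(\Cut_p \cap \partial E) = 0$. For the third, since $\tilde\partial E \subset M^*$ and $\hat S_M$ has Hausdorff dimension $\le n-2$ by Fact~\ref{fact:str}(2), $\Hm^{n-1}(\tilde\partial E \cap \hat S_M) = 0$; the remaining contribution from $S_M \setminus \hat S_M$ lies on $\partial M$ and is disposed of via the doubling convention.

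The substantive step is to show that $\tilde\partial E \cap A_p(\hat\partial E \setminus \Cut_p)$ is $\Hm^{n-1}$-null. For such an $x$, pick $y \in \hat\partial E \setminus \Cut_p$ with $x$ on $py$. Since $y \notin \Cut_p$, the minimal geodesic $py$ is unique and determines a direction $\theta_y \in \Sigma_p M$, so $x = \gamma_{\theta_y}(d(p,x))$ with $\gamma_\theta$ the unit-speed geodesic from $p$ in direction $\theta$. The transversality of $\tilde\partial E$ to $\nabla r_p$ (Assumption~\ref{asmp:E}) lets me realise $\tilde\partial E$ locally as a smooth graph $r = f(\theta)$ over an open subset of $\Sigma_p M$, whence $x = \gamma_{\theta_y}(f(\theta_y))$. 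The composition $y \mapsto \theta_y \mapsto \gamma_{\theta_y}(f(\theta_y))$ is locally Lipschitz, so $\tilde\partial E \cap A_p(\hat\partial E \setminus \Cut_p)$ can be covered by countably many Lipschitz images of subsets of $\hat\partial E$; since $\Hm^{n-1}(\hat\partial E) = 0$, the result follows.

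The main obstacle I foresee is making rigorous the Lipschitz dependence of $\theta_y$ on $y$, uniformly on the compacta $\hat\partial E \cap W_{p,1-1/n}$ that approximate $\hat\partial E \setminus \Cut_p$. This should follow from the Alexandrov convexity and the Lipschitz continuity of $\Phi_{p,t}$ stated in \S\ref{sec:intro}, together with the observation that on $\hat\partial E \cap W_{p,1-1/n}$ there is a uniform lower bound on the length of the forward geodesic extension before hitting $\Cut_p$, which controls the inverse radial expansion from $p$.
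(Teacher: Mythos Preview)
Your argument for (1) is essentially the paper's, with $W_{p,1-1/n}$ in place of the paper's $W_\rho := \{x : \exists\, y,\ x \in py,\ d(x,y) \ge \rho\}$; both exhaust $M \setminus \Cut_p$ and the rest is the same bookkeeping.

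For (2), your four-piece decomposition and the treatment of the first three pieces match the paper's. For the fourth piece, however, the paper takes a different and more direct route. Rather than factoring through $\Sigma_p M$, it picks any two points $p_1, p_2 \in \tilde\partial E \cap A_p(\hat\partial E \cap W_\rho)$, extends each geodesic $pp_i$ to hit $\hat\partial E \cap W_\rho$ at $x_i$ and then a further distance $\rho$ to a point $q_i$, and applies the \emph{Wald convexity} (Fact~\ref{fact:Wald}) to the quadruple $p_1,p_2,q_1,q_2$ with the interior points $x_i \in p_iq_i$. This yields $d(p_1,p_2) \le c\, d(x_1,x_2)$ in one stroke, whence $\Hm^{n-1}(\tilde\partial E \cap A_p(\hat\partial E \cap W_\rho)) \le c^{n-1}\Hm^{n-1}(\hat\partial E \cap W_\rho) = 0$.

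Your route through $\Sigma_p M$ runs into a real obstacle that you flag but underestimate: on an Alexandrov space there is no well-behaved exponential map. Not every direction in $\Sigma_p M$ is realized by a geodesic, and even on the set of realized directions the assignment $\theta \mapsto \gamma_\theta(r)$ need not be Lipschitz, so the ``smooth graph $r = f(\theta)$'' and the second leg of your composite map $y \mapsto \theta_y \mapsto \gamma_{\theta_y}(f(\theta_y))$ are not available as stated. The Lipschitz continuity of $\Phi_{p,t}$ you invoke goes in the expanding direction; the contraction you actually need, together with the transversal graph step to handle the varying radius, would in effect rebuild a four-point comparison from scratch. The paper's use of Fact~\ref{fact:Wald} packages all of this into a single comparison in a model space and, notably, never invokes the transversality of $\tilde\partial E$ to $\nabla r_p$.
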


\begin{proof}
  (1): For $\rho > 0$, we set
  \[
  W_\rho := \{\; x \in M \mid \text{there is $y \in M$ such that
    $x \in py$ and $d(x,y) \ge \rho$}\;\}.
  \]
  Since $D_\rho = D \cap W_\rho$ and $W_\rho$ is closed,
  it suffices to prove that $D$ is a Borel set.
  In fact, $A_p(\hat\partial E \cap W_\rho)$ is closed,
  monotone non-increasing in $\rho$, and satisfies
  \begin{equation}
    \label{eq:AECut}
    \bigcup_{\rho > 0} A_p(\hat\partial E \cap W_\rho)
    = A_p(\hat\partial E \setminus \Cut_p),
  \end{equation}
  which is a Borel set.
  Since $\tilde\partial E$, $S_M$, $\Cut_p$ are all Borel,
  so is $D$.

  (2): We take any points $p_1,p_2 \in \tilde\partial E
  \cap A_p(\hat\partial E \cap W_\rho)$.
  For each $i = 1,2$,
  we extend $pp_i$ to a minimal geodesic from $p$ hitting
  $\hat\partial E \cap W_\rho$ and denote the hitting point by $x_i$.
  We further extends the geodesic beyond $x_i$ to the point, say $q_i$,
  such that $d(x_i,q_i) = \rho$.
  Such the points $x_i$ and $q_i$ necessarily exist because of
  $p_1,p_2 \in A_p(\hat\partial E \cap W_\rho)$.
  For the points $p_i,q_i,x_i$, $i=1,2$, we apply the Wald convexity
  (Fact \ref{fact:Wald}) and have
  $d(p_1,p_2) \le c \; d(x_1,x_2)$, where $c$ is a constant
  independent of $p_1$, $p_2$.
  Therefore,
  $\Hm^{n-1}(\tilde\partial E \cap A_p(\hat\partial E \cap W_\rho))
  \le c^{n-1} \Hm^{n-1}(\hat\partial E \cap W_\rho) = 0$,
  which together with \eqref{eq:AECut} implies
  $\Hm^{n-1}(\tilde\partial E \cap A_p(\hat\partial E \setminus \Cut_p)) = 0$.
  Combining this, $\Hm^{n-1}(S_M) = 0$, and
  $\Hm^{n-1}(\Cut_p \cap \partial E) = 0$, we obtain (2).
\end{proof}

For two points $x,y \in D_\rho$, we define the point $\pi_x(y)$
to be the intersection point of a minimal geodesic from $p$
passing through $y$ and $\partial B(p,r_p(x))$ (if any).
Since $\nabla r_p$ and $\tilde\partial E$ are transversal to each other,
if $d(x,y)$ is small enough compared with a given $x \in \tilde\partial E$,
such the intersection point $\pi_x(y)$ exists.

\begin{lem} \label{lem:A}
  For any subset $A \subset B(x,\delta) \cap D_\rho$ with $\Hm^{n-1}(A) > 0$,
  we have
  \[
  \left| \frac{|\langle \nu_E(x),\nabla r_p(x) \rangle|\, \Hm^{n-1}(A)}
    {\Hm^{n-1}(\pi_x(A))} - 1 \right| < \theta(\delta|x,\rho).
  \]
\end{lem}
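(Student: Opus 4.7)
The content of the lemma is a first-order Jacobian computation. Since $x \in D_\rho \subset M^* \setminus (S_M \cup \Cut_p)$, the tangent cone $K_xM$ is Euclidean $\R^n$, the Riemannian metric $g$ is continuous at $x$, and $r_p$ and $\tilde\partial E$ are smooth near $x$. In $T_xM$ the infinitesimal model of $\pi_x$ at $x$ is the restriction to the hyperplane $\nu_E(x)^\perp$ of the linear projection $\R^n \to \nabla r_p(x)^\perp$ along the line $\R\,\nabla r_p(x)$; a direct determinant computation (writing the Gram matrix of the images of an orthonormal basis of $\nu_E(x)^\perp$ as $I-aa^T$ with $|a|^2 = 1-\langle\nu_E(x),\nabla r_p(x)\rangle^2$) gives that its $(n-1)$-dimensional Jacobian equals $|\langle \nu_E(x),\nabla r_p(x)\rangle|$. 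The plan is to show that the actual $\pi_x$ deviates from this model only by a factor $1+\theta(\delta|x,\rho)$ uniformly on $A$, and then to conclude via the area formula.

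First I would check that $\pi_x$ is actually defined on all of $A$: for $y\in A$, the defining property of $D_\rho$ supplies $q_y$ with $y \in pq_y$ and $d(y,q_y)\ge \rho$, and since $|r_p(y)-r_p(x)| \le d(x,y) < \delta \ll \rho$, the point on $pq_y$ at distance $r_p(x)$ from $p$ is interior to this segment, so $\pi_x(y)$ is well defined. Alexandrov convexity applied to $\triangle p\,y\,q_y$ together with the comparison point on $\tilde p\tilde q_y$ at distance $r_p(x)$ from $\tilde p$ then shows that $\pi_x$ is $(1+\theta(\delta|x,\rho))$-Lipschitz on $A$, so $\pi_x(A)$ is a measurable subset of $\partial B(p,r_p(x))$.

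The crucial step is the uniform metric comparison. For each pair $y_1,y_2\in A$ I apply the Wald convexity (Fact \ref{fact:Wald}) to the quadruple $(p,p,q_{y_1},q_{y_2})$, combined with the angle identities of Corollary \ref{cor:dia} applied to the almost-straight broken geodesics that arise because $y_i\in pq_{y_i}$ and $\pi_x(y_i)\in pq_{y_i}$. This produces comparison configurations in $M^3(\kappa_0)$ with $\kappa_0 := \min\{\underline{\kappa}(\Omega),0\}$ for a fixed domain $\Omega$ containing everything; the uniform lower bound $d(y_i,q_{y_i})\ge \rho$ keeps the $\doteqdot$-errors of Fact \ref{fact:dia} uniformly controlled, and as $\delta\to 0$ these $\kappa_0$-configurations converge to their Euclidean counterparts in the tangent cone $K_xM = \R^n$. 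The conclusion is
\[
d(y_1,y_2) = d^0(y_1,y_2)\bigl(1+\theta(\delta|x,\rho)\bigr), \quad
d(\pi_x(y_1),\pi_x(y_2)) = d^0\bigl(\pi_x^0(y_1),\pi_x^0(y_2)\bigr)\bigl(1+\theta(\delta|x,\rho)\bigr),
\]
uniformly in $y_1,y_2\in A$, where $d^0$ denotes Euclidean distance in $T_xM$ after identifying $y\in B(x,\delta)$ with its image in $T_xM$ under a fixed smooth chart, and $\pi_x^0$ is the linearization described above.

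Combining the Jacobian identity for $\pi_x^0$ with these uniform bi-Lipschitz estimates and the area formula, applied in a smooth chart around $x$ (which exists since $x\in M^*\setminus S_M$), yields the claim. The main obstacle is the uniformity of the metric comparison: a pointwise Taylor expansion at $x$ would deliver the Jacobian only at $x$ itself, whereas the lemma requires an error estimate holding simultaneously for every measurable $A\subset B(x,\delta)\cap D_\rho$. It is precisely the Wald convexity, together with the uniform extension length $\rho$ built into $D_\rho$, that upgrades pointwise differentiation to this uniform bound.
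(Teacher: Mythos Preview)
Your proposal is essentially the paper's argument: both reduce $\pi_x$ to the Euclidean orthogonal projection from $\nu_E(x)^\perp$ onto $\nabla r_p(x)^\perp$ (whose Jacobian is $|\langle\nu_E(x),\nabla r_p(x)\rangle|$) via a uniform bi-Lipschitz comparison obtained from Alexandrov and Wald convexity together with Corollary~\ref{cor:dia}, the extension length $\rho$ being what makes the reverse Lipschitz bound uniform. One small correction: your Wald quadruple $(p,p,q_{y_1},q_{y_2})$ degenerates to ordinary triangle comparison and gives only one inequality; the paper applies Fact~\ref{fact:Wald} to $p_1=\sigma(t_1)$, $p_2=\tau(t_1)$, $q_1=\sigma(r+\rho/2)$, $q_2=\tau(r+\rho/2)$ with $x_i$ at parameter $t_2$, and it is precisely this choice (with the $q_i$ placed $\rho/2$ beyond $x$) that delivers the inverse Lipschitz estimate you need.
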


\begin{proof}
  We fix $x$ and $\rho$, then we write
  $\theta(\delta) = \theta(\delta|x,\rho)$.
  Assume $\delta \ll \rho$.
  For $a,b \in \R$, we define
  $a \simeq b$ as $|a - b| \le \theta(\delta)|a|$.

  Let $y,z \in B(x,\delta) \cap D_\rho$ be two different points.
  We take a minimal geodesic, say $\sigma$ (resp.~$\tau$),
  from $p$ containing $py$ (resp.~$pz$) which has
  maximal length.
  It follows that $L(\sigma), L(\tau) \ge r-\delta+\rho \ge r+\rho/2$,
  where we set $r := r_p(x)$.

  \begin{slem} \label{slem:A} We have
    $d(\sigma(t_1),\tau(t_1)) \simeq d(\sigma(t_2),\tau(t_2))$
    for any $t_1$ and $t_2$ with $\sigma(t_i),\tau(t_i) \in B(x,\delta)$.
  \end{slem}
  \begin{proof}
    The Alexandrov convexity implies
    \[
    d(\sigma(t_1),\tau(t_1)) \ge (1-\theta(\delta)) d(\sigma(t_2),\tau(t_2)).
    \]

    An inverse estimate follows from applying
    the Wald convexity (Fact \ref{fact:Wald}) to 
    $p_1 := \sigma(t_1)$, $p_2 := \tau(t_1)$,
    $q_1 := \sigma(r+\rho/2)$, $q_2 := \tau(r+\rho/2)$,
    $x_1 := \sigma(t_2)$, $x_2 := \tau(t_2)$.
  \end{proof}

  Setting $y' := \tau(r_p(y))$ and $z' := \sigma(r_p(z))$ we have,
  by Sublemma \ref{slem:A},
  \[
  d(\pi_x(y),\pi_x(z)) \simeq d(y,y') \simeq d(z,z').
  \]
  Let $\alpha := \angle zyz'$.
  By Corollary \ref{cor:dia},
  $\alpha \doteqdot \tilde\angle zyz'$ and hence
  \[
  d(y,z)\sin\alpha \simeq d(\pi_x(y),\pi_x(z)).
  \]
  We also have
  \[
  |r_p(y) - r_p(z)| = d(y,z') \simeq d(y,z)\cos\alpha.
  \]
  We assume that $\delta$ is small enough compared with $x$.
  Then, there is a chart $(U,\varphi)$ of $M^*$ containing
  $B(x,\delta)$ such that $\varphi(\partial E)$ is a hyper-plane
  in $\varphi(U) \subset \R^n$ and $g_{ij}(x) = \delta_{ij}$.
  Let $c$ be a curve from $y$ to $z$ such that
  $\varphi\circ c$ is a Euclidean line segment in $\varphi(U)$.
  Since $\langle\dot c(s),\nabla r_p(c(s))\rangle \doteqdot
  \langle\dot c(0),\nabla r_p(y)\rangle$ and $L(c) \simeq d(y,z)$,
  the first variation formula leads to
  \[
  r_p(y) - r_p(z) \simeq d(y,z) \langle\dot c(0),\nabla r_p(y)\rangle.
  \]
  and so $\cos\alpha \doteqdot |\langle\dot c(0),\nabla r_p(y)\rangle|$.
  Therefore,
  \[
  d(y,z) \sqrt{1-\langle\dot c(0),\nabla r_p(y) \rangle^2}
  \simeq d(\pi_x(y),\pi_x(z)).
  \]
  Take a hyper-plane $H \subset \R^n$ containing $\varphi(x)$
  and perpendicular to $\nabla r_p(x)$.
  Denoting the orthogonal projection by $P :
  \varphi(\partial E) \to H$,
  we see
  \begin{align*}
      d_{\R^n}(P(\varphi(y)),P(\varphi(z))) &=
      d(y,z) \sqrt{1-\langle\dot c(0),\nabla r_p(y) \rangle^2}\\
      &\simeq d(\pi_x(y),\pi_y(z)),
  \end{align*}
  which implies that $\Hm^{n-1}(\pi_x(A)) \simeq \Hm^n(P(\varphi(A)))$.
  Since $g_{ij} \doteqdot \delta_{ij}$ on $U$,
  we have $d(y,z) \simeq d_{\R^n}(\varphi(y),\varphi(z))$ and
  $\Hm^{n-1}(A) \simeq \Hm^{n-1}(\varphi(A))$.
  This completes the proof.
\end{proof}

\begin{proof}[Proof of Theorem \ref{thm:LapComp}]
  By the Green Formula (Theorem \ref{thm:Green}),
  it suffices to prove the theorem that
  \begin{equation}
    \label{eq:E}
    \int_{\partial E} \langle \nu_E,\nabla r_p \rangle \; d\Hm^{n-1}
    \ge -(n-1)\sup_{x \in E} \cot_\kappa(r_p(x))
    \Hm^n(E)
  \end{equation}
  for any region $E$ satisfying Assumption \ref{asmp:E}.

  We define
  \begin{align*}
    D^- &:= \{\; x \in D \mid \langle \nu_E(x),\nabla r_p(x) \rangle < 0\;\},\\
    D^+ &:= \{\; x \in D \mid \langle \nu_E(x),\nabla r_p(x) \rangle > 0\;\},\\
    D_\rho^\pm &:= D_\rho \cap D^\pm.
  \end{align*}
  They are all $\Hm^{n-1}$-measurable sets.  Take any
  $\epsilon > 0$ and fix it for a moment.
  Let $\delta_{x,\rho} > 0$ be a number small enough
  compared with $x$,$\rho$, and $\epsilon$.
  For the $\theta(\delta|x,\rho)$ of Lemma \ref{lem:A},
  we assume $\theta(\delta_{x,\rho}|x,\rho) \le \epsilon$.
  For a point $x \in \partial D^-$,
  let $\pi(x)$ be the intersection point of $px$ and $\partial E$ 
  which is nearest to $x$.
  From the definition of $D$ we have $\pi(D^-) \subset D^+$.
  Take a countable dense subset $\{x^-_k\}_k \subset D_\rho^-$
  and set $x^+_k := \pi(x^-_k)$.
  It holds that $x^+_k \in D^+$.
  We find a number $\delta_k$ in such a way that
  $0 < \delta_k < \delta_{x^-_k,\rho}$ and
  $\pi(B(x^-_k,\delta_k) \cap D^-_\rho)
  \subset B(x^+_k,\delta_{x^+_k,\rho})$.
  It follows from
  $D_\rho^- \subset \bigcup_k B(x^-_k,\delta_k)$ that
  there are disjoint $\Hm^{n-1}$-measurable subsets
  $B^-_k \subset B(x^-_k,\delta_k)$
  with $D_\rho^- = \bigcup_k B^-_k$.
  Setting $B^+_k := \pi(B^-_k)$ we have $B^+_k \subset D^+_\rho$.
  The definition of $\delta_k$ and Lemma \ref{lem:A} lead to
  \[
  \left| \frac{|\langle \nu_E(x^\pm_k),\nabla r_p(x^\pm_k) \rangle|
      \,\Hm^{n-1}(B^\pm_k)}
    {\Hm^{n-1}(\pi_{x^\pm_k}(B^\pm_{x^\pm_k}))} - 1 \right| < \epsilon.
  \]
  Taking $\delta_k$ small enough, we assume that
  \[
  | \langle \nu_E(x^\pm_k),\nabla r_p(x^\pm_k)\rangle
  - \langle \nu_E,\nabla r_p \rangle | < \epsilon
  \quad\text{on $B^\pm_k$}.
  \]
  Thus we have
  \begin{align*}
    &\int_{D^-_\rho \cup \pi(D^-_\rho)} \langle \nu_E,\nabla r_p \rangle
    \; d\Hm^{n-1}\\
    &= \sum_k \left\{
      \int_{B^-_k} \langle \nu_E,\nabla r_p \rangle \; d\Hm^{n-1}
      + \int_{B^+_k} \langle \nu_E,\nabla r_p \rangle \; d\Hm^{n-1}
    \right\}\\
    &\ge \sum_k \left\{
      \langle \nu_E(x^-_k),\nabla r_p(x^-_k) \rangle \Hm^{n-1}(B^-_k)
      +\langle \nu_E(x^+_k),\nabla r_p(x^+_k) \rangle \Hm^{n-1}(B^+_k)
    \right\}\\
    &\quad - 2\epsilon \Hm^{n-1}(\partial E)\\
    &\ge \sum_k \left\{
      \Hm^{n-1}(\pi_{x^+_k}(B^+_k)) - \Hm^{n-1}(\pi_{x^-_k}(B^-_k))
    \right\} - 4\epsilon \Hm^{n-1}(\partial E).\\
    \intertext{By Lemma \ref{lem:acomp}, this is}
    &\ge -(n-1) \left( \sup_E \cot_\kappa \circ r_p \right)
    \sum_k \Hm^n(A_{r^+_k,r^-_k}(\pi_{x^-_k}(B^-_k)))
    - 4\epsilon \Hm^{n-1}(\partial E).
  \end{align*}
  where we set $r^\pm_k := r_p(x^\pm_k)$.
  Let $A(D^-_\rho)$ be the region in $A_p(D^-_\rho)$
  between $D^-_\rho$ and $\pi(D^-_\rho)$.
  We assume the division $\{B^-_k\}_k$ of $D^-_\rho$ to be so fine that
  \[
  \left|\, \sum_k \Hm^n(A_{r^+_k,r^-_k}(\pi_{x^-_k}(B^-_k)))
  -\Hm^n(A(D^-_\rho)) \,\right| < \epsilon.
  \]
  Therefore,
  \begin{align*}
    &\int_{D^-_\rho \cup \pi(D^-_\rho)} \langle \nu_E,\nabla r_p \rangle
    \; d\Hm^{n-1}\\
    &\ge -(n-1) \left( \sup_E \cot_\kappa \circ r_p \right)
    (\Hm^n(A(D^-_\rho)) + \bar\epsilon)
    - 4\epsilon \Hm^{n-1}(\partial E),
  \end{align*}
  where $\bar\epsilon$ is either $\epsilon$ or $-\epsilon$.
  We define $A(D^-)$ as in the same manner as
  $A(D^-_\rho)$.
  After $\epsilon \to 0$ we take $\rho \to 0$ and then have
  \begin{equation} \label{eq:E1}
    \int_{D^- \cup \pi(D^-)} \langle \nu_E,\nabla r_p \rangle
    \; d\Hm^{n-1} 
    \ge -(n-1) \left( \sup_E \cot_\kappa \circ r_p \right)
    \Hm^n(A(D^-)).
  \end{equation}
  Set $D' := D^+ \setminus \pi(D^-)$.
  The set of $x \in \partial E$ such that
  $px$ passes through $D'$ is of $\Hm^{n-1}$-measure zero.
  Therefore, the same discussion as above leads to
  \begin{equation} \label{eq:E2}
    \int_{D'}  \langle \nu_E,\nabla r_p \rangle
    \; d\Hm^{n-1}  
    \ge -(n-1) \left( \sup_E \cot_\kappa \circ r_p \right)
    \Hm^n(A(D')),
  \end{equation}
  where $A(D')$ is the intersection of $E$ and
  the union of images of minimal geodesics from $p$
  intersecting $D'$.
  By \eqref{eq:E1} and \eqref{eq:E2} we obtain \eqref{eq:E}.
  This completes the proof.
\end{proof}

By using Theorem \ref{thm:LapComp}, a direct calculation implies

\begin{cor}
  Under the same assumption as in Theorem \ref{thm:LapComp},
  for any $C^2$ function $f : \R \to \R$ with $f' \ge 0$, we have
  \[
  \bar\Delta f\circ r_p
  \ge -\frac{(s_\kappa^{n-1} f')'}{s_\kappa^{n-1}}\circ r_p \;d\Hm^n
  \qquad\text{on $M^* \setminus \{p\}$.}
  \]
\end{cor}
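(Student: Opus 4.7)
The plan is to derive the estimate by combining a BV chain rule for $\blap$ applied to $f \circ r_p$ with the Laplacian Comparison Theorem, Theorem \ref{thm:LapComp}. Since $r_p$ is DC on $M \setminus \{p\}$ by Lemma \ref{lem:rpDC} and $f$ is $C^2$, the composition $f \circ r_p$ is locally DC on $M^* \setminus \{p\}$, so its distributional Laplacian is a well-defined Radon measure.

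First I would compute $\blap(f \circ r_p)$ by applying Lemma \ref{lem:Green}(1) (after multiplying by a smooth compactly supported cutoff to satisfy the support hypothesis, which is harmless since the claimed inequality is local) with $u := f' \circ r_p$ and $v := r_p$. Because $u$ is locally Lipschitz on $M \setminus \{p\}$, its approximate jump set is empty, so the condition $\Hm^{n-1}(J_u \cap J_{dv}) = 0$ holds automatically. The lemma then gives
\[
-\blap(f \circ r_p)
= \bdiv\bigl((f' \circ r_p)\,\nabla r_p\bigr)
= e\bigl(\nabla(f' \circ r_p),\nabla r_p\bigr) - (f' \circ r_p)\,\blap r_p.
\]

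Next I would simplify the $e(\cdot,\cdot)$ term. On $M^* \setminus (S_M \cup \Cut_p \cup \{p\})$, which has full $\Hm^n$-measure in $M^* \setminus \{p\}$, the vector field $\nabla r_p$ is continuous (so agrees with its approximate limit), $|\nabla r_p|=1$, and $\partial_i(f' \circ r_p) = (f'' \circ r_p)\,\partial_i r_p$. Combining these gives
\[
e\bigl(\nabla(f' \circ r_p),\nabla r_p\bigr)
= (f'' \circ r_p)\,\langle \nabla r_p,\nabla r_p\rangle\,d\Hm^n
= (f'' \circ r_p)\,d\Hm^n.
\]
Thus $\blap(f \circ r_p) = (f' \circ r_p)\,\blap r_p - (f'' \circ r_p)\,d\Hm^n$.

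Finally, since $f' \ge 0$, multiplying the inequality of Theorem \ref{thm:LapComp} by $f' \circ r_p$ preserves its direction, yielding
\[
\blap(f \circ r_p)
\ge -\bigl[(n-1)(\cot_\kappa \circ r_p)(f' \circ r_p) + (f'' \circ r_p)\bigr] d\Hm^n,
\]
and the elementary identity
\[
\frac{(s_\kappa^{n-1} f')'}{s_\kappa^{n-1}}
= (n-1)\cot_\kappa \cdot f' + f''
\]
puts the right-hand side into the claimed form. The main technical point is the rigorous verification of the chain rule identity $\blap(f \circ r_p) = (f' \circ r_p)\,\blap r_p - (f'' \circ r_p)\,d\Hm^n$ in the BV setting; however, all needed machinery is available in Lemma \ref{lem:Green} and the preceding BV calculus, and the hypotheses reduce to trivial ones thanks to the local Lipschitz continuity of $f' \circ r_p$.
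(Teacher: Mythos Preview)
Your proposal is correct and is precisely the ``direct calculation'' the paper alludes to (the paper gives no further details beyond that phrase). The chain-rule identity $\blap(f\circ r_p) = (f'\circ r_p)\,\blap r_p - (f''\circ r_p)\,d\Hm^n$ follows from Lemma~\ref{lem:Green}(1) as you indicate, and then Theorem~\ref{thm:LapComp} together with the elementary identity $(s_\kappa^{n-1}f')'/s_\kappa^{n-1} = (n-1)\cot_\kappa\cdot f' + f''$ finishes the argument.
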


\begin{rem} \label{rem:LapCut}
  $\blap r_p$ is not absolutely continuous with respect to $\Hm^n$
  on the cut-locus of $p$.
  In fact, let $M$ be an $n$-dimensional complete Riemannian manifold
  without boundary and $N \subset M$ a $k$-dimensional submanifold
  without boundary which is contained in $\Cut_p$ for a point $p \in M$.
  (We do assume the completeness of $N$.)
  Denote by $\nu(N)$ the normal bundle over $N$ and by $\nu_\epsilon(N)$
  the set of vectors in $\nu(N)$ with length $\le \epsilon$.
  We assume that there exists a number $\epsilon_0 > 0$ such that
  $\exp(\nu_{\epsilon_0}(N)) \cap \Cut_p = N$.
  For $x \in N$, let $V_x$ be the set of unit vectors at $x$ tangent to
  minimal geodesics from $x$ to $p$.
  $V_x$ is isometric to a $(n-k-1)$-sphere of radius $\in (\,0,1\,]$.
  The angle between $u$ and $V_x$,
  $\alpha(x) := \inf_{v \in V_x} \angle(u,v)$, is constant
  for all $u \in \nu_1(N) \cap T_xM$.
  Applying Theorem \ref{thm:Green} to
  $\nu_\epsilon(N')$, $N' \subset N$, $0 < \epsilon \le \epsilon_0$,
  we have
  \[
  d\blap r_p\lfloor_N(x)
  = \omega_{n-k-1} \cos\alpha(x) \; d\Hm^k\lfloor_N(x),
  \]
  where $\omega_{n-k-1}$ is the volume of a unit $(n-k-1)$-sphere.
  In particular, $\blap r_p$ is not absolutely continuous with respect to
  $\Hm^n$ on $N$.
\end{rem}

The following is needed in the proofs of Theorem \ref{thm:splitting},
Corollaries \ref{cor:heatcomp} and \ref{cor:eigen}.

\begin{cor} \label{cor:LapCompform}
  For any $f \in C_0^\infty(M^* \setminus \{p\})$ with $f \ge 0$, we have
  \[
  \int_{M^*} \langle \nabla r_p,\nabla f \rangle \; d\Hm^n
  \ge \int_{M^*} f\; d\blap r_p
  \ge -(n-1) \int_{M^*} f \, \cot_\kappa \circ r_p \; d\Hm^n.
  \]
\end{cor}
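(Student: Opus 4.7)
The proof splits cleanly into the two inequalities, and I would handle them separately.

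For the second inequality $\int_{M^*} f \, d\blap r_p \ge -(n-1)\int_{M^*} f \, \cot_\kappa\circ r_p \, d\Hm^n$, the plan is simply to integrate the measure inequality of Theorem \ref{thm:LapComp}, namely $d\blap r_p \ge -(n-1)\cot_\kappa\circ r_p \, d\Hm^n$ on $M^*\setminus\{p\}$, against the nonnegative test function $f$. This is legitimate because $\supp f$ is compact in the open set $M^*\setminus\{p\}$ where the measure inequality holds, so both sides are well-defined finite quantities and monotonicity of integration against the positive weight $f$ gives the result.

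For the first inequality $\int_{M^*}\langle\nabla r_p,\nabla f\rangle\,d\Hm^n \ge \int_{M^*} f\, d\blap r_p$ (which, in fact, will come out as an equality), the plan is to invoke the Green Formula, Theorem \ref{thm:Green}. I would choose a region $E\subset M^*\setminus\{p\}$ satisfying Assumption \ref{asmp:E1} together with $\Hm^{n-1}(\Cut_p\cap\partial E)=0$, chosen so that $\supp f$ lies in the interior of $E$. Because $f\equiv 0$ on $\partial E$, the boundary term in Theorem \ref{thm:Green} disappears and we are left with
\[
\int_E f\,d\blap r_p \;=\; \int_E \langle\nabla f,\nabla r_p\rangle\,d\Hm^n.
\]
Both sides are unchanged when the domain is enlarged to all of $M^*$, since $f$ vanishes outside $E$ and the integrands are integrated against $f$ (or against $\blap r_p$, which is a Radon measure locally on $M^*\setminus\{p\}$, but the integrand $f$ vanishes off $E$). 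This yields exactly the desired inequality (with equality).

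The main technical point, which I view as the chief obstacle, is producing an admissible $E$ with $\supp f\subset\operatorname{int} E \subset E\subset M^*\setminus\{p\}$ that simultaneously satisfies the smooth-boundary condition of Assumption \ref{asmp:E1}, the BV-noncharging condition $|D_k g_{ij}|(\partial E\cap U)=0$ for every chart $U$, and $\Hm^{n-1}(\Cut_p\cap\partial E)=0$. The plan is a coarea-type slicing argument: starting from a smooth compactly supported auxiliary function $\psi\ge 0$ with $\{\psi>0\}$ a relatively compact neighbourhood of $\supp f$ in $M^*\setminus\{p\}$ and with $\psi$ strictly positive on $\supp f$, take $E_s:=\{\psi\ge s\}$ for a generic small $s>0$. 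For almost every $s$ the level set $\partial E_s$ is smooth (Sard), the locally finite BV measures $|D_k g_{ij}|$ of the boundary vanish (because a locally finite Radon measure can charge only countably many level hypersurfaces with positive mass), and, since $\Hm^n(\Cut_p)=0$, the coarea formula forces $\Hm^{n-1}(\Cut_p\cap\partial E_s)=0$ for $\Hm^1$-almost every $s$. Any such generic $s$ produces the required $E=E_s$, completing the proof.
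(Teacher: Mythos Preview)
Your argument is essentially the paper's proof in the case where $M^*$ has empty boundary, and in that case it is correct; the slicing construction of $E$ is fine, and the Green formula indeed gives equality in the first relation.

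However, you have overlooked the case where $M$ (and hence $M^*$) has nonempty boundary; this case is explicitly covered by the corollary (see Fact~\ref{fact:str}(1) and the remark following Corollary~\ref{cor:eigen}). If $\supp f$ meets $\partial M^*$, you cannot enclose it in a region $E\subset M^*\setminus\{p\}$ whose $C^\infty$ boundary stays in the interior of $M^*$, so your sublevel-set construction no longer produces an $E$ to which Theorem~\ref{thm:Green} applies with vanishing boundary term. The paper handles this by pushing $\partial M^*$ slightly inward to a hypersurface $N$, applying the Green formula on a region between $N$ and an outer piece where $f$ vanishes, and then letting $N\to\partial M^*$. The surviving boundary integral $-\int_{N\cap\supp f} f\,\langle\nu_N,\nabla r_p\rangle\,d\Hm^{n-1}$ need not vanish; it has the correct sign in the limit only because $M$ is convex in its double, which forces any limit of $\langle\nu_N,\nabla r_p\rangle$ to be nonpositive. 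This geometric input is precisely why the first relation is stated as an inequality rather than the equality you claim, and it is missing from your proposal.
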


\begin{proof}
  Theorem \ref{thm:LapComp} says the second inequality of
  the corollary.
  In the case where $M^*$ has no boundary, the Green Formula
  (Theorem \ref{thm:Green}) tells us that the first term is
  equal to the second.
  We prove the first inequality in the case where $M^*$ has
  non-empty boundary.
  Assume that $M^*$ has non-empty boundary.
  Since $M^*$ is a $C^\infty$ manifold with $C^\infty$ boundary
  $\partial M^*$, we can approximate
  $\partial M^*$ by a $C^\infty$ hypersurface $N \subset M^*$
  with respect to the $C^1$ topology such that $\Hm^{n-1}(\Cut_p \cap N) = 0$
  and $|D_k g_{ij}|(N \cap U) = 0$
  for any $i,j,k$ and for any chart $U$ of $M^*$.  Let $V_N$ be
  the closed region in $M^*$ bounded by $N$ and
  not containing the boundary of $M^*$.
  We find a compact region $E \subset M^* \setminus \{p\}$
  satisfying the assumption of the Green Formula
  (Theorem \ref{thm:Green}) such that
  $V_N \cap \supp f \subset E \subset V_N$.
  The Green Formula implies
  \[
  \int_{V_N} \langle \nabla r_p,\nabla f \rangle \; d\Hm^n
  = \int_{V_N} f \; d\blap r_p
  - \int_{N \cap \supp f} f \, \langle \nu_N,\nabla r_p \rangle \; d\Hm^{n-1},
  \]
  where $\nu_N$ is the inward unit normal vector fields on $N$
  with respect to $V_N$.
  Since $M$ is convex in the double of $M$,
  as $N$ converges to the boundary of $M^*$ in the $C^1$ topology,
  any limit of $\langle \nu_N,\nabla r_p \rangle$ is non-positive,
  which together with Fatou's lemma shows
  \[
  \limsup_{N \to \partial M^*}
  \int_{N \cap \supp f} f \, \langle \nu_N,\nabla r_p \rangle
  \; d\Hm^{n-1} \le 0.
  \]
  This completes the proof.
\end{proof}

\section{Splitting Theorem} \label{sec:splitting}

We prove the Topological Splitting Theorem, \ref{thm:splitting},
following the idea of Cheeger-Gromoll \cite{CG:split}.

Let $M$ be a non-compact Alexandrov space and
$\gamma$ a \emph{ray} in $M$, i.e., a geodesic defined on $[\,0,+\infty\,)$
such that $d(\gamma(s),\gamma(t)) = |s-t|$ for any $s,t \ge 0$.

\begin{defn}[Busemann Function]
  The \emph{Busemann function $b_\gamma : M \to \R$ for $\gamma$}
  is defined by
  \[
  b_\gamma(x) := \lim_{t \to +\infty} \{ t - d(x,\gamma(t)) \},
  \quad x \in M.
  \]
\end{defn}

It follows from the triangle inequality that
$t - d(x,\gamma(t))$ is monotone non-decreasing in $t$,
so that the limit above exists.
$b_\gamma$ is a $1$-Lipschitz function.

\begin{defn}
  We say that a ray $\sigma$ in $M$ is \emph{asymptotic to $\gamma$}
  if there exist a sequence $t_i \to +\infty$, $i = 1,2,\dots$,
  and minimal geodesics
  $\sigma_i : [\,0,l_i\,] \to M$ with $\sigma_i(l_i) = \gamma(t_i)$
  such that $\sigma_i$ converges to $\sigma$ as $i \to \infty$,
  (i.e., $\sigma_i(t) \to \sigma(t)$ for each $t$).
\end{defn}

For any point in $M$, there is a ray asymptotic to $\gamma$
from the point.
Any subray of a ray asymptotic to $\gamma$ is asymptotic to $\gamma$.
By the same proof as for Riemannian manifolds
(cf.~Theorem 3.8.2(3) of \cite{SST:totcurv}), for any ray $\sigma$ asymptotic
to $\gamma$ we have
\begin{equation}
  \label{eq:blinear}
  b_\gamma \circ \sigma(s) = s + b_\gamma \circ \sigma(0)
  \quad\text{for any $s \ge 0$.}
\end{equation}
For a complete Riemannian manifold, $b_\gamma$ is differentiable at
$\sigma(s)$ for any $s > 0$, which seems to be true also for
Alexandrov spaces, but we do not need it for the proof of
Theorem \ref{thm:splitting}.

\begin{lem} \label{lem:diff}
  Let $f : M \to \R$ be a $1$-Lipschitz function and
  $u,v \in \Sigma_pM$ two directions at a point $p \in M$.
  If the directional derivative of $f$ to $u$ is equal to $1$
  and that to $v$ equal to $-1$,
  then the angle between $u$ and $v$ is equal to $\pi$.
\end{lem}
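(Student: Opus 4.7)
The plan is to combine the $1$-Lipschitz hypothesis with the two directional derivatives to force the two short geodesics from $p$ in the directions $u$ and $v$ to look almost antipodal, and then read off $\angle(u,v)=\pi$ from the Euclidean comparison angle.

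First, I would fix minimal geodesics $\alpha$ and $\beta$ from $p$ representing the directions $u$ and $v$ respectively. The hypothesis that the directional derivative of $f$ along $u$ equals $1$ gives $f(\alpha(s)) = f(p) + s + o(s)$, and similarly $f(\beta(s)) = f(p) - s + o(s)$. Subtracting these identities,
\[
f(\alpha(s)) - f(\beta(s)) = 2s + o(s).
\]
Since $f$ is $1$-Lipschitz, the left-hand side is bounded above by $d(\alpha(s),\beta(s))$, which in turn is bounded above by $2s$ by the triangle inequality. Combining these we obtain
\[
2s - o(s) \;\le\; d(\alpha(s),\beta(s)) \;\le\; 2s.
\]

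Next, I would translate this into the statement that the Euclidean comparison angle converges to $\pi$. The law of cosines for the $0$-comparison triangle $\triangle\tilde\alpha(s)\tilde p\tilde\beta(s)$ gives
\[
\cos \tilde\angle^{0} \alpha(s)p\beta(s)
= 1 - \frac{d(\alpha(s),\beta(s))^2}{2s^2},
\]
and substituting the squeeze $d(\alpha(s),\beta(s)) = 2s - o(s)$ yields $\cos\tilde\angle^{0}\alpha(s)p\beta(s) \to -1$, so $\tilde\angle^{0}\alpha(s)p\beta(s) \to \pi$. By the defining property of the angle in an Alexandrov space — namely that $\angle(u,v)$ is the limit of the $\kappa_0$-comparison angles $\tilde\angle^{\kappa_0}\alpha(s)p\beta(s)$ as $s\to 0$ for any lower curvature bound $\kappa_0$, and these agree with the $0$-comparison angle in the limit $s\to 0$ — we conclude $\angle(u,v)=\pi$.

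There is essentially no hard step; the only subtlety is to make sure that the directional derivative hypothesis is interpreted along actual minimal geodesics realizing $u$ and $v$ (so that $\alpha$ and $\beta$ exist), and that the angle $\angle(u,v)$ in $\Sigma_p M$ truly agrees with the limit of Euclidean comparison angles. Both are standard facts about Alexandrov spaces (monotonicity of the $\kappa$-comparison angle under refinement, together with the fact that as $s,t\to 0$ the $\kappa$-comparison angle and $0$-comparison angle differ by $o(1)$), so the proof is a short squeeze argument.
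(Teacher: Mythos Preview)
Your argument is correct and essentially identical to the paper's: both subtract the two directional derivatives, invoke the $1$-Lipschitz bound to get $d(\alpha(s),\beta(s))\ge 2s+o(s)$, and read off $\angle(u,v)=\pi$ from the comparison angle. The only refinement in the paper is that it works with points $x_t,y_t$ whose directions merely \emph{converge} to $u,v$ rather than with geodesics realizing $u,v$ exactly, which sidesteps the issue (which you flag but do not quite resolve) that an arbitrary direction in $\Sigma_pM$ need not be the initial direction of a genuine minimal geodesic.
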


\begin{proof}
  There are points $x_t, y_t \in M$, $t > 0$, such that
  $d(p,x_t) = d(p,y_t) = t$ for all $t > 0$ and that
  the direction at $p$ of $px_t$ (resp.~$py_t$)
  converges to $u$ (resp.~$v$) as $t \to 0$.
  The assumption for $f$ tells us that
  \[
  \lim_{t \to 0} \frac{f(x_t) - f(p)}{t} = 1 \quad\text{and}\quad
  \lim_{t \to 0} \frac{f(y_t) - f(p)}{t} = -1,
  \]
  which imply
  \[
  \lim_{t \to 0} \frac{d(x_t,y_t)}t \ge
  \lim_{t \to 0} \frac{f(x_t) - f(y_t)}{t} = 2. 
  \]
  This completes the proof.
\end{proof}

\begin{lem} \label{lem:bfunc}
  Assume that a ray $\sigma : [\,0,\,+\infty\,) \to M$ is asymptotic
  to a ray $\gamma : [\,0,\,+\infty\,) \to M$, and let $s$ be a given
  positive number.
  Then, among all rays emanating from $\sigma(s)$,
  only the subray $\sigma|_{[\,s,+\infty\,)}$ of $\sigma$
  is asymptotic to $\gamma$.
\end{lem}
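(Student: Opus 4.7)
The plan is to argue by contradiction: suppose there is a ray $\tau : [0,+\infty) \to M$ emanating from $\sigma(s)$, asymptotic to $\gamma$, but distinct from $\sigma|_{[s,+\infty)}$. I will derive a contradiction from the Alexandrov convexity applied to a suitable triangle whose two edges out of a common vertex secretly share an initial segment along $\sigma$.

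First I exploit the Busemann function. Applying \eqref{eq:blinear} to both rays, $b_\gamma(\sigma(u)) = u + b_\gamma(\sigma(0))$ and $b_\gamma(\tau(t)) = t + b_\gamma(\sigma(s)) = s + t + b_\gamma(\sigma(0))$. Thus for any $\epsilon \in (0, s)$ and $t > 0$,
\[
b_\gamma(\tau(t)) - b_\gamma(\sigma(s-\epsilon)) = \epsilon + t.
\]
Since $b_\gamma$ is $1$-Lipschitz this forces $d(\sigma(s-\epsilon), \tau(t)) \ge \epsilon + t$; the triangle inequality through $\sigma(s)$ gives the reverse bound. Hence equality holds, and consequently the concatenation $\sigma|_{[s-\epsilon, s]}$ followed by $\tau|_{[0, t]}$ is a minimal geodesic from $\sigma(s-\epsilon)$ to $\tau(t)$.

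Next, fix small $t_0 > 0$ and $\epsilon \in (0, s)$ and consider the triangle $\triangle\,\sigma(s-\epsilon)\,\sigma(s+t_0)\,\tau(t_0)$, whose edges are $e_1 := \sigma|_{[s-\epsilon, s+t_0]}$, the edge $e_2$ given by the concatenation above (both of length $\epsilon + t_0$), and any minimal geodesic $e_3$ from $\sigma(s+t_0)$ to $\tau(t_0)$ of length $\delta_0 := d(\sigma(s+t_0), \tau(t_0))$. Since $\delta_0 \le 2t_0$ by the triangle inequality through $\sigma(s)$, the perimeter is arbitrarily small for $\epsilon, t_0$ small, so the triangle lies in a bounded region $\Omega$ whose $\kappa_0$-comparison triangle exists, where $\kappa_0 := \underline{\kappa}(\Omega)$ is finite. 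The crucial observation is that the point $\sigma(s)$ lies on both $e_1$ and $e_2$ at distance exactly $\epsilon$ from $\sigma(s-\epsilon)$. Taking $x = y = \sigma(s)$ in the Alexandrov convexity inequality yields
\[
0 = d(x, y) \ge d(\tilde x, \tilde y),
\]
where $\tilde x, \tilde y$ are the corresponding points on $\tilde e_1, \tilde e_2$ at distance $\epsilon$ from $\tilde\sigma(s-\epsilon)$. If $\delta_0 > 0$, the comparison edges $\tilde e_1, \tilde e_2$ terminate at distinct vertices of the $\kappa_0$-comparison triangle, so their points at positive distance $\epsilon$ from the common vertex are distinct, giving $d(\tilde x, \tilde y) > 0$, a contradiction. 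Hence $\delta_0 = 0$, i.e., $\sigma(s+t_0) = \tau(t_0)$ for every sufficiently small $t_0 > 0$.

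Finally I extend this local coincidence globally. Define
\[
T^\ast := \sup\{\,T \ge 0 \mid \sigma(s+t) = \tau(t) \text{ for all } t \in [0, T]\,\} > 0.
\]
If $T^\ast < +\infty$, continuity gives $\sigma(s+T^\ast) = \tau(T^\ast)$. Since subrays of rays asymptotic to $\gamma$ are themselves asymptotic to $\gamma$, the rays $\sigma|_{[s+T^\ast, +\infty)}$ and $\tau|_{[T^\ast, +\infty)}$ from $\sigma(s+T^\ast)$ are both asymptotic to $\gamma$. Replaying the argument with $s + T^\ast$ in place of $s$ (using that $s + T^\ast > 0$ provides room to take $\epsilon > 0$ backward along $\sigma$) produces agreement on a nontrivial interval past $T^\ast$, contradicting the definition of $T^\ast$. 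Therefore $T^\ast = +\infty$ and $\tau = \sigma|_{[s, +\infty)}$. The main obstacle is the Alexandrov convexity step: I must be careful that the $\kappa_0$-comparison triangle exists (forcing me to work first with small $t_0$) and must exploit the coincidence $x = y = \sigma(s)$ at equal parameters on two edges to collapse the Alexandrov inequality into an actual degeneracy of the comparison triangle.
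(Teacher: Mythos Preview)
Your argument is correct. The route differs from the paper's, which is a one-liner: apply \eqref{eq:blinear} and Lemma~\ref{lem:diff} with $f = b_\gamma$ to see that the initial direction of any ray $\tau$ from $\sigma(s)$ asymptotic to $\gamma$ makes angle $\pi$ with the backward direction of $\sigma$ at $\sigma(s)$, and then invoke the non-branching of geodesics in Alexandrov spaces. Your proof unpacks both of these ingredients. In your first paragraph you bypass Lemma~\ref{lem:diff} entirely: rather than comparing directional derivatives, you use the $1$-Lipschitz bound on $b_\gamma$ together with \eqref{eq:blinear} to show directly that the concatenation $\sigma|_{[s-\epsilon,s]} \cup \tau|_{[0,t]}$ is a \emph{minimal} geodesic, which is a stronger and cleaner conclusion than the angle-$\pi$ statement. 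Your second paragraph is then precisely the standard proof of non-branching from the Alexandrov convexity, specialized to this situation. The continuation argument in the third paragraph is fine but could be replaced by a single appeal to non-branching once local coincidence is established. In short, the paper's proof is terse because it leans on Lemma~\ref{lem:diff} and on non-branching as a known fact; your proof is longer but entirely self-contained and arguably more transparent about where the Alexandrov hypothesis actually enters.
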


\begin{proof}
  Look at \eqref{eq:blinear} and use Lemma \ref{lem:diff} for
  $f := b_\gamma$.
\end{proof}

\begin{lem} \label{lem:bsplitting}
  Let $\gamma$ be a straight line in $M$.
  Denote by $b_+$ the Busemann function for
  $\gamma_+ := \gamma|_{[\,0,+\infty\,)}$ and by $b_-$
  that for $\gamma_- := \gamma|_{(\,-\infty,0\,]}$.
  If $b_+ + b_- \equiv 0$ holds, then
  $M$ is covered by disjoint straight lines bi-asymptotic to $\gamma$.
  In particular, $b_+^{-1}(t)$ for all $t \in \R$ are homeomorphic
  to each other and $M$ is homeomorphic to $b_+^{-1}(t) \times \R$.
\end{lem}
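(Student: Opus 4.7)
The plan is to produce, through every point $x \in M$, a uniquely determined straight line $\ell_x$ bi-asymptotic to $\gamma$, and then to assemble the family $\{\ell_y : y \in b_+^{-1}(0)\}$ into a homeomorphism $\Phi : b_+^{-1}(0) \times \mathbb{R} \to M$.

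For existence at an arbitrary $x \in M$, I take asymptotic rays $\sigma_+$ and $\sigma_-$ from $x$ to $\gamma_+$ and $\gamma_-$ respectively. By \eqref{eq:blinear} and the hypothesis $b_+ + b_- \equiv 0$ one gets $b_+(\sigma_+(s)) - b_+(\sigma_-(t)) = s + t$ for all $s, t \ge 0$, so the $1$-Lipschitzness of $b_+$ forces $d(\sigma_+(s), \sigma_-(t)) \ge s + t$, while the triangle inequality gives the reverse; the concatenation of $\sigma_-$ reversed with $\sigma_+$ is therefore a straight line $\ell_x$ through $x$, bi-asymptotic to $\gamma$ by construction.

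For uniqueness I use Lemma \ref{lem:bfunc}, which pins down the asymptotic ray at interior points of any given asymptotic ray. Once $\ell_x$ exists, the bootstrap is that $x$ becomes an interior point, at parameter $s > 0$, of the forward asymptotic ray $u \mapsto \ell_x(-s + u)$ emanating from $\ell_x(-s)$, and a symmetric statement holds for the backward direction; Lemma \ref{lem:bfunc} then pins down the forward and backward asymptotic rays from $x$ uniquely as the two halves of $\ell_x$, so any bi-asymptotic line through $x$ must equal $\ell_x$, and disjointness of distinct $\ell_y, \ell_{y'}$ follows. The main obstacle I anticipate is justifying the bootstrap step: that the forward extension of $\ell_x$ beyond $x$ is actually an asymptotic ray based at $\ell_x(-s)$, not just at $x$. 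I intend to handle this by taking minimal geodesics from $\ell_x(-s)$ to $\gamma_+(t_i)$, showing via the rigidity $b_+(\ell_x(r)) = r + b_+(x)$ along $\ell_x$ together with the $1$-Lipschitz and triangle inequalities that they pass arbitrarily close to $x$ for large $t_i$, and extracting a convergent subsequence whose limit coincides with $\ell_x|_{[-s,\infty)}$.

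Finally, parametrize each $\ell_y$ by arclength with $\ell_y(0) = y$ (so that $b_+(\ell_y(t)) = t$ by \eqref{eq:blinear}) and define $\Phi : b_+^{-1}(0) \times \mathbb{R} \to M$ by $\Phi(y, t) := \ell_y(t)$. Injectivity is exactly the uniqueness above, and surjectivity follows from $x = \Phi(\ell_x(-b_+(x)), b_+(x))$. Continuity of $\Phi$ and $\Phi^{-1}$ I plan to obtain from an Arzel\`a--Ascoli argument: for $x_n \to x$, the uniformly $1$-Lipschitz lines $\ell_{x_n}$ subconverge on compacta to a $1$-Lipschitz curve through $x$ which is straight (distances pass to the limit) and inherits the bi-asymptotic property by continuity of $b_\pm$ and closure of the class of asymptotic rays under uniform convergence; by uniqueness the limit equals $\ell_x$, so the full sequence converges to $\ell_x$. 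This yields $M \cong b_+^{-1}(0) \times \mathbb{R}$ and, in particular, all level sets $b_+^{-1}(t)$ homeomorphic to one another.
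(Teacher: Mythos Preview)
Your overall plan matches the paper's, and your existence step is actually cleaner than the paper's: you concatenate two asymptotic rays $\sigma_\pm$ from $x$ and use the rigidity $b_+(\sigma_+(s)) - b_+(\sigma_-(t)) = s+t$ together with the $1$-Lipschitz bound to see the concatenation is a straight line. The paper instead takes only $\sigma_+$ and, for each $s>0$, uses Lemma~\ref{lem:diff} to force any ray from $\sigma_+(s)$ asymptotic to $\gamma_-$ to start in the direction opposite to $\sigma_+'(s)$, then invokes non-branching to extend $\sigma_+$ backward. Your route avoids Lemma~\ref{lem:diff} at this stage.

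The soft spot is your bootstrap for uniqueness. From the length identities you can only conclude that the broken path $\ell_x(-s)\to x\to\gamma_+(t_i)$ is \emph{almost minimizing}; this does not force a chosen minimal geodesic $\tau_i$ from $\ell_x(-s)$ to $\gamma_+(t_i)$ to pass close to $x$, since Alexandrov spaces admit many distinct minimizers and near-minimizing curves need not be close to any of them. So ``they pass arbitrarily close to $x$'' is unjustified as written, and the limiting ray you extract is, a priori, just \emph{some} ray from $\ell_x(-s)$ asymptotic to $\gamma_+$.

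There is an easy repair using your own existence step. At $\ell_x(-s)$ you already have one ray asymptotic to $\gamma_-$, namely the subray $\ell_x|_{(-\infty,-s]}$ reversed. Pair it with an \emph{arbitrary} ray $\tau_+$ from $\ell_x(-s)$ asymptotic to $\gamma_+$; by your concatenation argument this yields a straight line through $\ell_x(-s)$ agreeing with $\ell_x$ on $(-\infty,-s]$, hence equal to $\ell_x$ by non-branching. Thus $\tau_+ = \ell_x|_{[-s,\infty)}$, which is exactly the bootstrap you need, and uniqueness and disjointness follow. (This is also what the paper is doing, in dual form, via Lemma~\ref{lem:diff}: at any point of the line the opposite direction is unique, so asymptotic rays to $\gamma_\pm$ are pinned down.) With this fix, your argument goes through; the Arzel\`a--Ascoli continuity discussion at the end is more detailed than what the paper provides and is fine, noting that the limit line $\ell'$ inherits $b_+(\ell'(t))=t$, and then uniqueness at $y$ (proved above without assuming bi-asymptoticity in advance) identifies it with $\ell_y$.
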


\begin{proof}
  Take any point $p \in M$ and a ray $\sigma : [\,0,+\infty\,) \to M$
  from $p$ asymptotic to $\gamma_+$.
  For any $s > 0$, the directional derivatives of $b_+$ to the two
  opposite directions at $\sigma(s)$ tangent to $\sigma$ are
  $-1$ and $1$ respectively.
  Since $b_- = -b_+$ and by Lemma \ref{lem:diff},
  a ray from $\sigma(s)$ asymptotic to $\gamma_-$ is unique
  and contains $\sigma([\,0,s\,])$.
  By the arbitrariness of $s > 0$,
  $\sigma$ extends to a straight line bi-asymptotic to $\gamma$.
  Namely, for a given point $p \in M$, we have a straight line
  $\sigma_p$ passing through $p$ and bi-asymptotic to $\gamma$.
  By Lemma \ref{lem:bfunc}, any ray from a point in $\sigma_p$
  asymptotic to $\gamma_\pm$ is a subray of $\sigma_p$.
  In particular, $\sigma_p$ is unique (upto parameters) for a given $p$.
  $M$ is covered by $\{\sigma_p\}_{p \in M}$ and
  this completes the proof.
\end{proof}

\begin{lem} \label{lem:bsubh}
  Assume that $M$ satisfies $\BG(0)$ at any point on a ray $\gamma$ in $M$.
  Then, the Busemann function $b_\gamma$ is $\mathcal{E}$-subharmonic.
\end{lem}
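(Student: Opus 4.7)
The plan is to write $b_\gamma$ as the monotone pointwise limit of the DC functions
\[
h_T(x) := T - d(x,\gamma(T)), \qquad T > 0,
\]
and transfer the Laplacian Comparison estimate at $\gamma(T)$ (available by $\BG(0)$ at $\gamma(T)$) to $\mathcal{E}$-subharmonicity of $b_\gamma$. By Lemma \ref{lem:rpDC}, each $h_T$ is DC on $M \setminus \{\gamma(T)\}$; the triangle inequality shows that $T \mapsto h_T(x)$ is monotone non-decreasing with limit $b_\gamma(x)$, and all functions involved are $1$-Lipschitz. In particular $b_\gamma \in W^{1,2}_\loc(M)$ and, by Dini's theorem, $h_T \to b_\gamma$ uniformly on compact sets.

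Fix $\varphi \in C^\infty_0(M^*)$ with $\varphi \ge 0$, set $K := \supp \varphi$ and $C_K := \sup_{x \in K} d(x, \gamma(0))$. For $T > C_K$ we have $\gamma(T) \notin K$ and $r_{\gamma(T)} \ge T - C_K$ on $K$ by the reverse triangle inequality. Apply Corollary \ref{cor:LapCompform} to $r_{\gamma(T)}$ with test function $\varphi$ (legitimate since $\BG(0)$ holds at $\gamma(T)$ and $\cot_0(r) = 1/r$):
\[
\int_{M^*} \langle \nabla r_{\gamma(T)}, \nabla\varphi \rangle \, d\Hm^n
\ge -(n-1)\int_{M^*} \frac{\varphi}{r_{\gamma(T)}} \, d\Hm^n
\ge -\frac{n-1}{T-C_K}\int_{M^*} \varphi \, d\Hm^n.
\]
Since $\nabla h_T = -\nabla r_{\gamma(T)}$ as locally BV vector fields on $M^* \setminus \{\gamma(T)\}$, this rearranges to
\[
\mathcal{E}(h_T, \varphi) \le \frac{n-1}{T-C_K}\int_{M^*} \varphi \, d\Hm^n \xrightarrow[T \to \infty]{} 0.
\]

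It remains to pass to the limit $\mathcal{E}(h_T, \varphi) \to \mathcal{E}(b_\gamma, \varphi)$, which I expect to be the main technical step. On each uniformly elliptic chart of $M^*$ (Fact \ref{fact:g}), the $1$-Lipschitz bound on $h_T$ yields a uniform $L^\infty$-bound on the distributional partial derivatives $\partial_i h_T$. By Banach--Alaoglu, every subsequence admits a further weak-$*$ convergent sub-subsequence in $L^\infty_\loc$, and the uniform convergence $h_T \to b_\gamma$ distributionally identifies the limit as $\partial_i b_\gamma$; hence the whole family $\{\nabla h_T\}$ converges weakly-$*$ in $L^\infty_\loc(M^*)$, and hence weakly in $L^2_\loc(M^*)$, to $\nabla b_\gamma$. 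Pairing against the compactly supported, bounded coefficients $g^{ij}\sqrt{|g|}\,\partial_j\varphi$ of the test form gives $\mathcal{E}(h_T, \varphi) \to \mathcal{E}(b_\gamma, \varphi)$, and combining with the bound above yields $\mathcal{E}(b_\gamma, \varphi) \le 0$, as required.
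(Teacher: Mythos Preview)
Your argument is correct and follows the same overall strategy as the paper: apply Corollary~\ref{cor:LapCompform} at the points $\gamma(T)$, use $\cot_0 = 1/r$ together with $r_{\gamma(T)} \to \infty$ on $\supp\varphi$, and pass to the limit in $\mathcal{E}(h_T,\varphi)$.

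The one genuine difference is in the convergence step. The paper establishes \emph{pointwise} a.e.\ convergence $\nabla r_{\gamma(t_i)}(x) \to -\nabla b_\gamma(x)$ by a geometric argument: at an a.e.\ point $x$ where all the $r_{\gamma(t_i)}$ and $b_\gamma$ are differentiable, the minimal geodesics $x\gamma(t_i)$ converge to the (unique, by differentiability of $b_\gamma$) asymptotic ray from $x$, hence the unit gradients converge; dominated convergence then gives $\mathcal{E}(h_{t_i},\varphi) \to \mathcal{E}(b_\gamma,\varphi)$. Your route instead extracts the limit by weak-$*$ compactness of the uniformly bounded family $\{\partial_i h_T\}$ in $L^\infty_\loc$ and identifies it distributionally via $h_T \to b_\gamma$. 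This is a legitimate and somewhat softer alternative: it needs no information about asymptotic rays or uniqueness of geodesics, only the Lipschitz bound and the monotone (hence locally uniform) convergence of $h_T$. The paper's argument, on the other hand, yields the stronger conclusion that the gradients actually converge a.e., which is of independent geometric interest.
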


See Definition \ref{defn:subh} for the definition of
$\mathcal{E}$-subharmonicity.

\begin{proof}
  We take a sequence $t_i \to +\infty$, $i=1,2,\dots$.
  Since $r_{\gamma(t_i)}$, $b_\gamma$ are $1$-Lipschitz, they are
  $\Hm^n$-a.e.~differentiable.
  Let $x \in M^*$ be any point where $r_{\gamma(t_i)}$ and $b_\gamma$ are
  all differentiable.
  We have a unique minimal geodesic $\sigma_{x,i}$ from $x$ to
  $\gamma(t_i)$ and $\nabla r_{\gamma(t_i)}(x)$ is tangent to it.
  A ray $\sigma_x$ from $x$ asymptotic to $\gamma$ is unique
  and $-\nabla b_\gamma(x)$ is tangent to it.
  Since $\sigma_{x,i} \to \sigma_x$ as $i \to \infty$,
  we have $\nabla r_{\gamma(t_i)}(x) \to -\nabla b_\gamma(x)$.
  Therefore, the dominated convergence theorem
  and Corollary \ref{cor:LapCompform} show that for
  any $u \in C^\infty_0(M^*)$ with $u \ge 0$,
  \begin{align*}
    -\int_{M^*} \langle \nabla b_\gamma,\nabla u\rangle \; d\Hm^n
    &= \lim_i \int_{M^*} \langle \nabla r_{\gamma(t_i)},\nabla u\rangle
    \; d\Hm^n\\
    &\ge -(n-1) \lim_i \int_{M^*} \frac{u}{r_{\gamma(t_i)}} \; d\Hm^n = 0.
  \end{align*}
  This completes the proof.
\end{proof}

\begin{rem}
  In general, $b_\gamma$ is not of DC and $\blap b_\gamma$ does not exist
  as a Radon measure.
\end{rem}

\begin{proof}[Proof of Theorem \ref{thm:splitting}]
  By Lemma \ref{lem:bsubh},
  $b := b_+ + b_-$ is $\mathcal{E}$-subharmonic.
  It follows from the triangle inequality that $b \le 0$.
  We have $b \circ \gamma \equiv 0$ by the definition of $b$.
  The maximum principle (Lemma \ref{lem:maxprin}) proves $b \equiv 0$.
  Lemma \ref{lem:bsplitting} implies the theorem.
\end{proof}

\begin{proof}[Proof of Corollary \ref{cor:splitting}]
  We denote by $\Delta$ the usual Laplacian induced from
  the $C^\infty$ Riemannian metric on $M \setminus S_M$.
  It follows from $b_+ + b_- = 0$ that $b_+$ is $\mathcal{E}$-subharmonic and
  $\mathcal{E}$-superharmonic, so that
  $b_+$ is a weak solution of $\Delta u = 0$ on $M \setminus S_M$.
  By the regularity theorem of elliptic differential equation,
  $b_+$ is $C^\infty$ on $M \setminus S_M$ and satisfies
  $\Delta b_+ = 0$ pointwise on $M \setminus S_M$.
  By using Weitzenb\"ock formula and by $\Ric(\nabla b_+,\nabla b_+) \ge 0$,
  the Hessian of $b_+$ vanishes on $M \setminus S_M$, namely
  $b_+$ is a linear function along any geodesic in $M \setminus S_M$.
  Since any geodesic joining two points in $M \setminus S_M$ is
  contained in $M \setminus S_M$, the set of geodesic segments
  in $M \setminus S_M$ is dense in the set of all geodesic segments.
  Therefore, $b_+$ is linear along any geodesic in $M$.
  Since $M$ is covered by straight lines bi-asymptotic to $\gamma$,
  $b_+$ is averaged $D^2$ in the sense of \cite{Mk:splitting}.
  The corollary follows from
  Theorem A of \cite{Mk:splitting}.
\end{proof}

\bibliographystyle{amsplain}
\bibliography{all}

\nocite{BGP, Ot:secdiff, Pr:DC, OS:rstralex}
\nocite{Pt:harmAlex, Pt:subharmAlex}

\end{document}